\newcommand{\eps}[0]{\varepsilon}
\newcommand{\Cov}[0]{\text{Cov}}
\newcommand{\diag}[0]{\text{diag}}
\newcommand{\calF}[0]{\mathcal{F}}
\newcommand{\E}[0]{\mathbb{E}}
\renewcommand{\P}[0]{\mathbb{P}}
\newcommand{\Prob}[0]{\mathbb{P}}
\newcommand{\lp}[0]{\left(}
\newcommand{\rp}[0]{\right)}
\newcommand{\lal}[0]{\left|}
\newcommand{\ral}[0]{\right|}
\newcommand{\bigindex}[1]{\uppercase\expandafter{\romannumeral#1}}
\theoremstyle{plain}
\newtheorem{thm}{Theorem}[section]
\newtheorem{lem}[thm]{Lemma}
\newtheorem{cor}[thm]{Corollary}
\newtheorem{assumption}[thm]{Assumption}
\theoremstyle{definition}
\theoremstyle{remark}
\newtheorem{rmk}{Remark}
\long\def\hid#1*/{}  
\definecolor{Blue}{rgb}{0,0,1}
\definecolor{Red}{rgb}{1,0,0}
\begin{document}
\begin{center}
\large{\bf Gaussian Approximation for High Dimensional Time Series}
\end{center}

\begin{center}
{By Danna Zhang and Wei Biao Wu}
\end{center}

\begin{center}
{Department of Statistics, University of Chicago}
\end{center}

\begin{center}
{\today}
\end{center}

\begin{abstract}
We consider the problem of approximating sums of high-dimensional stationary time series by Gaussian vectors, using the framework of functional dependence measure. The validity of the Gaussian approximation depends on the sample size $n$, the dimension $p$, the moment condition and the dependence of the underlying processes. We also consider an estimator for long-run covariance matrices and study its convergence properties. Our results allow constructing simultaneous confidence intervals for mean vectors of high-dimensional time series with asymptotically correct coverage probabilities. A Gaussian multiplier bootstrap method is proposed. A simulation study indicates the quality of Gaussian approximation with different $n$, $p$ under different moment and dependence conditions.
\end{abstract}

\section{Introduction}
\label{sec:introduction}
During the past decade, there has been a significant development on high-dimensional data analysis with applications in many fields. In this paper we shall consider simultaneous inference for mean vectors of high-dimensional stationary processes, so that one can perform family-wise multiple testing or construct simultaneous confidence intervals, an important problem in the analysis of spatial-temporal processes. To fix the idea, let $X_i$ be a stationary process in $\mathbb R^p$ with mean $\mu = (\mu_1, \ldots, \mu_p)^\top$ and finite second moment in the sense that $\E (X_i^\top X_i) < \infty$. In the scalar case in which $p = 1$ or when $p$ is fixed, under suitable weak dependence conditions, we can have the central limit theorem (CLT)
\begin{eqnarray}
\label{eq:15140307}
{1\over \sqrt n} \sum_{i=1}^n (X_i-\mu) \Rightarrow N(0, \Sigma), \mbox{ where }
  \Sigma = \sum_{k=-\infty}^\infty \E ( (X_0-\mu) (X_k-\mu)^\top).
\end{eqnarray}
See, for example, \citet{rosenblatt1956central}, \citet{ibragimov1971independent}, \citet{Wu2005}, \citet{dedecker2007weak}
and \citet{bradley2007introduction} among others. In the high dimension case in which $p$ can also diverge to infinity, \citet{portnoy1986central} showed that the central limit theorem can fail for i.i.d. random vectors if $\sqrt n = o(p)$. In this paper we shall consider an alternative form: Gaussian approximation for the largest entry of the sample mean vector  $\bar X_n = n^{-1} \sum_{i=1}^n X_i$. For a vector $v = (v_1, \ldots, v_p)^\top$, let $|v|_\infty = \max_{j\le p} |v_j|$. Specifically, our primary goal is to establish the Gaussian Approximation (GA) in $\mathbb R^p$
\begin{eqnarray}\label{eq:1507172206}
 \sup_{u \ge 0} |\P( \sqrt n |\bar X_n-\mu|_\infty \ge u) - \P(|Z|_\infty \ge u)| \to 0,
\end{eqnarray}
where both $n, p \to \infty$. Here the Gaussian vector $Z = (Z_1, \ldots, Z_p)^\top \sim N(0, \Sigma)$.   \citet{chernozhukov2013} studied the Gaussian approximation for independent random vectors. There has been limited research on high-dimensional inference under dependence. The associated statistical inference becomes considerably more challenging since the autocovariances with all lags should be considered. \citet{zhang2014} extended the Gaussian approximation in \citet{chernozhukov2013} to very weakly dependent random vectors which satisfy a uniform geometric moment contraction condition. The latter condition is also adopted in \citet{csw2015} for self-normalized sums. \citet{chernozhukov2013testing} did a similar extension to strong mixing random vectors. Here we shall establish (\ref{eq:J021209}) for a wide class of high-dimensional stationary process under suitable conditions on the magnitudes of $p$, $n$, and the mild dependence conditions on the process $(X_i)$.

In Section \ref{sec:high-dimentional time series} we shall introduce the framework of high-dimensional time series and some concepts about functional and predictive dependence measures that are useful for establishing an asymptotic theory.  The main result for Gaussian approximation of the normalized mean vector and the choice of the normalization matrix is established in Section \ref{sec:Gaussian approximation}. Depending on the moment and the dependence conditions, both high dimension and ultra high dimension cases are discussed.

To perform statistical inference based on (\ref{eq:J021209}), one needs to estimate the long-run covariance matrix $\Sigma$. The latter problem has been extensively studied in the scalar case; see \citet{politis1999subsampling}, \citet{buhlmann2002}, \citet{lahiri2003resampling}, \citet{Alexopoulos2004}, among others. In Section \ref{sec:estimation of long-run variance} we study the batched-mean estimate of long-run covariance matrices and derive a large deviation result about quadratic forms of stationary processes. The latter tail probabilities inequalities allow dependent and/or non-sub-Gaussian processes under mild conditions, which is expected to be useful in other high-dimensional inference problems for dependent vectors. The consistency of the batched-mean estimate ensures the validity of the normalized Gaussian multiplier bootstrap method.

We provide in Section \ref{sec:probability inequalities} some sharp inequalities for tail probabilities for dependent processes in both polynomial tail and exponential tail cases. Part of the proof are relegated to Section \ref{sec:proof}. 

We now introduce some notation. For a random variable $X$ and $q > 0$, we write $X \in \mathcal{L}^q$ if $\|X\|_{q} := (\E |X|^q)^{1/q} < \infty$, and for a vector $v = (v_1, \ldots, v_p)^\top$, let the norm-$s$ length $|v|_s = ( \sum_{j=1}^p |v_j|^s)^{1/s}$, $s \ge 1$. Write the $p \times p$ identity matrix as $\text{Id}_p$. For two real numbers, set $x \vee y =\max(x, y)$ and $x \wedge y =\min(x, y)$. For two sequences of positive numbers $(a_n)$ and $(b_n)$, we write $a_n \asymp b_n$ (resp. $a_n \lesssim b_n$ or $a_n \ll b_n$) if there exists some constant $C > 0$ such that $C^{-1} \leq a_n/b_n \leq C$ (resp. $a_n/b_n \leq C$ or $a_n/b_n \to 0$) for all large $n$. We use $C, C_1, C_2, \cdots$ to denote positive constants whose values may differ from place to place. A constant with a symbolic subscript is used to emphasize the dependence of the value on the subscript. Throughout the paper, we assume $p=p_n \rightarrow \infty$ as $n\rightarrow \infty$.

\section{High-dimensional Time Series}
\label{sec:high-dimentional time series}
Let $\eps_i, i \in \mathbb{Z}$, be i.i.d. random variables and $\calF^{i}=(\ldots, \eps_{i-1}, \eps_i)$; let $({X}_i)$ be a stationary process taking values in $\mathbb{R}^p$ that assumes the form
\begin{equation}
\label{highdimensionrepresentation}
X_i = (X_{i1}, X_{i2}, \ldots, X_{ip})^\top=G(\calF^{i}),
\end{equation}
where $G(\cdot)=(g_1(\cdot), \ldots, g_p(\cdot))^\top$ is an $\mathbb{R}^p$-valued measurable function such that $X_i$ is well-defined. In the scalar case with $p = 1$, (\ref{highdimensionrepresentation}) allows a very general class of stationary processes (cf. \citet{wiener1958nonlinear}, \citet{rosenblatt1971markov}, \citet{priestley1988non}, \citet{tong1990non}, \citet{Wu2005}, \citet{tsay2005analysis}, \citet{wu2011asymptotic}). It includes linear processes as well as a large class of nonlinear time series models. Within this framework, $(\eps_i)$ can be viewed as independent inputs of a physical system and all the dependences among the outputs $(X_i)$ result from the underlying data-generating mechanism $G(\cdot)$. The function $g_j(\cdot)$, $1 \leq j \leq p$, is the $j$-th coordinate projection of $G(\cdot)$. Unless otherwise specified, assume throughout the paper that $\E X_i=0$ and $\max_{j \le p} \| X_{ij} \|_q < \infty $ for some $q \geq 2$. Let $\Gamma(k) = (\gamma_{ij}(k))_{i, j=1}^p = \E (X_i X_{i+k}^\top)$ be the autocovariance matrix and recall the long-run covariance matrix
\begin{eqnarray}
\label{eq:J13426p}
\Sigma=(\sigma_{i j})_{i, j=1}^p=\sum_{k=-\infty}^\infty \Gamma(k)
\end{eqnarray}
if it exists. Note that $\sigma_{jj}=\sum_{k=-\infty}^\infty \gamma_{jj}(k)$, $1 \leq j \leq p$, is the long-run variance of the component process $X_{\cdot j} = (X_{i j})_{i \in \mathbb{Z}}$. For the latter process, following \citet{Wu2005} we define respectively the functional dependence and the predictive dependence measure
\begin{eqnarray}\label{eq:1507140845}
\delta_{i, q, j} &=& \| X_{ij}-X_{ij, \{0\}} \|_q=\| X_{ij}-g_j(\calF^{i, \{0\}}) \|_q, \cr
\theta_{i, q, j}&=&\| \E(X_{ij}|\calF^{0})-\E(X_{ij}|\calF^{-1})\|_q = \| \mathcal{P}^0 X_{ij} \|_q,\cr
\theta'_{i, q, j}&=&\|\E(X_{ij}|\calF_{0}^i)-\E(X_{ij}|\calF_{1}^i)\|_q=\|\mathcal{P}_0 X_{ij}\|_q,
\end{eqnarray}
where $\mathcal{F}^{i, \{j\}}=(\ldots, \varepsilon_{j-1}, \varepsilon_{j}', \varepsilon_{j+1}, \ldots, \varepsilon_{i})$ is a coupled version of $\mathcal{F}^{i}$ with $\varepsilon_{j}$ in $\mathcal{F}^i$ replaced by $\varepsilon_{j}'$, and $\varepsilon_k, \varepsilon_l'$, $k, l \in \mathbb{Z}$, are i.i.d. random variables, $\calF_i^j=(\eps_i, \eps_{i+1}, \ldots, \eps_j)$ and $\calF_{i}=(\eps_i, \eps_{i+1}, \ldots)$. Note that $\mathcal{F}^{i, \{j\}}= \mathcal{F}^{i}$ if $j > i$. To account for the dependence in the process $X_{\cdot j}$, we define the dependence adjusted norm
\begin{eqnarray}
\label{eq:2015081932}
\|X_{\cdot j} \|_{q, \alpha} = \sup_{m \geq 0} (m+1)^\alpha \Delta_{m, q, j},\,  \alpha \geq 0,
 \mbox{ where }  \Delta_{m, q, j}= \sum_{i=m}^\infty \delta_{i, q, j}, \,  m \geq 0.
\end{eqnarray}
Due to the dependence, it may happen that $\| X_{i j} \|_q < \infty$ while $\|X_{\cdot j} \|_{q, \alpha} = \infty$. Elementary calculations show that, if $X_{i j}, i \in \mathbb{Z}$, are i.i.d., then $\| X_{i j} \|_q \le \|X_{\cdot j} \|_{q, 0} \le 2 \| X_{i j} \|_q$, suggesting that the dependence adjusted norm is equivalent to the classical $L^q$ norm.

To account for high-dimensionality, we define
\begin{eqnarray*}
\Psi_{q, \alpha}= \max_{1 \leq j \leq p}\|X_{\cdot j}\|_{q, \alpha}
 \mbox{ and }
\Upsilon_{q, \alpha}=\left(\sum_{j=1}^p \|X_{\cdot j}\|_{q, \alpha}^q \right)^{1/q},
\end{eqnarray*}
which can be interpreted as the uniform and the overall dependence adjusted norms of $(X_i)_{i \in \mathbb{Z}}$, respectively.  The form (\ref{highdimensionrepresentation}) and its associated dependence measures provide a convenient framework for studying high-dimensional time series. \citet{chen2013} and \citet{zhang2014} considered some special cases:  the former paper requires that  $\max_{1 \leq j \leq p}\|X_{\cdot j}\|_{q, \alpha} \leq C$ while the latter imposes the stronger geometric moment contraction condition $\max_{1 \leq j \leq p}\Delta_{m, q, j} \le C \rho^m$ with $\rho \in (0, 1)$, and in both cases the constant $C$ does not depend on $p$. Those assumptions can be fairly restrictive. In this paper $\Psi_{q, \alpha}$ can be unbounded in $p$. Additionally, we define the $\mathcal{L}^{\infty}$ functional dependence measure and its corresponding dependence adjusted norm for the $p$-dimensional stationary process $(X_i)$
\begin{eqnarray*}
&&\omega_{i, q} = \| |X_i - X_{i, \{0\}}|_{\infty} \|_q;\\
&& \| |X_{\cdot}|_{\infty} \|_{q, \alpha}=\sup_{m \geq 0} (m+1)^\alpha \Omega_{m,q}, \alpha\geq 0, \text{ where } \Omega_{m,q}=\sum_{i=m}^\infty \omega_{i,q}, m \geq 0.
\end{eqnarray*}
Clearly, we have $\Psi_{q, \alpha} \leq \||X_{\cdot}|_\infty\|_{q, \alpha} \leq \Upsilon_{q, \alpha}$.

\section{Gaussian Approximations}
\label{sec:Gaussian approximation}

In this section we shall present main results on Gaussian approximations. Theorem \ref{th:1507140259} concerns the finite polynomial moment case with both weaker and stronger temporal dependence. Consequently the dimension $p$ allowed can be at most a power of $n$. If the underlying process has finite dependence-adjusted sub-exponential norms, Theorem \ref{th:1507140325} asserts that an ultra-high dimension $p$ can be allowed. Theorem \ref{maintheorem} in Section \ref{sec:1507151006} provides a convergence rate of the Gaussian approximation.

Recall (\ref{eq:J13426p}) for the long-run covariance matrix $\Sigma$. Let $\Sigma_0 = {\rm diag}(\Sigma)$ be the diagonal matrix of $\Sigma$, and $D_0 = {\rm diag}(\sigma_{11}^{1/2}, \ldots, \sigma_{pp}^{1/2})$. Assume $\mu = 0$. We consider the following normalized version of (\ref{eq:1507172206}):
\begin{eqnarray}\label{eq:J021209}
 \sup_{u \ge 0} |\P( \sqrt n |D_0^{-1} \bar X_n|_\infty \ge u) - \P(|D_0^{-1} Z|_\infty \ge u)| \to 0,
\end{eqnarray}

\begin{assumption}
\label{assumption:1507172202}
There exists a constant $c > 0$ such that $\min_{1 \leq j \leq p} \sigma_{jj} \ge c$.
\end{assumption}

To state Theorem \ref{th:1507140259}, we need to define the following quantities:  $\Theta_{q, \alpha}=\Upsilon_{q, \alpha} \wedge (\||X_{\cdot}|_\infty\|_{q, \alpha} \log p)$, $L_1 = (n^{1/q-1/2} (\log p)^{1/2} \Theta_{q, \alpha})^{1/(\alpha - 1/2 + 1/q)}$, $L_2 = (\Psi_{2, \alpha} \Psi_{2, 0} (\log p)^2 )^{1/\alpha}$, $W_1 = (\Psi_{3, 0}^6 + \Psi_{4, 0}^4) (\log (p n) )^7$, $W_2 = \Psi_{2, \alpha}^2 (\log (p n) )^4$, $W_3 = (n^{-\alpha} (\log (pn))^{3/2}  \Theta_{q, \alpha})^{1/(1/2-\alpha - 1/q)}$, $N_1 = (n / \log p)^{q/2} / \Theta_{q, \alpha}^q$, $N_2 = n (\log p)^{-2} \Psi_{2, \alpha}^{-2}$, $N_3 = (n^{1/2} (\log p)^{-1/2}  \Theta_{q, \alpha}^{-1})^{1/(1/2-\alpha)}$.

\begin{thm}
\label{th:1507140259}
Let Assumption \ref{assumption:1507172202} be satisfied. (i) Assume that $\Theta_{q, \alpha} < \infty$ holds with some $q \geq 4$ and $\alpha > 1/2 - 1/q$ (the weaker dependence case),
\begin{eqnarray}
\label{eq:1507142040}
\Theta_{q, \alpha} n^{1/q-1/2} (\log (p n) )^{3/2} \to 0
\end{eqnarray}
and
\begin{eqnarray}
\label{eq:1507142041}
\max(L_1, L_2) \max(W_1, W_2) = o(1) \min(N_1, N_2).
\end{eqnarray}
Then the Gaussian Approximation (\ref{eq:J021209}) holds. (ii) Assume $0 < \alpha < 1/2 - 1/q$ (the stronger dependence case). Then (\ref{eq:J021209}) holds if $\Theta_{q, \alpha} (\log p)^{1/2} = o(n^{\alpha})$ and
\begin{eqnarray}
\label{eq:1508010523}
L_2 \max(W_1, W_2, W_3) = o(1) \min(N_2, N_3).
\end{eqnarray}
\end{thm}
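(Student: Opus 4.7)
The plan is to prove (\ref{eq:J021209}) via a three-stage reduction. First, I would truncate $(X_i)$ to the $m$-dependent surrogate $\tilde X_i = \E(X_i \mid \eps_{i-m+1}, \ldots, \eps_i)$ and bound the $\ell^\infty$ deviation $\sqrt n \, |D_0^{-1}(\bar X_n - \bar{\tilde X}_n)|_\infty$ using the tail probability inequalities of Section \ref{sec:probability inequalities}. The residual $X_i - \tilde X_i$ is a tail sum of predictive projections with index $\le i-m$, so its dependence-adjusted norm is controlled by $\Theta_{q,\alpha}/(m+1)^{\alpha}$ in the polynomial case and by $\Psi_{2,\alpha}/(m+1)^{\alpha}$ in the variance case. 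Second, I would partition $\{1, \ldots, n\}$ into alternating big blocks of length $b$ and small blocks of length at least $m$, so that the big-block sums $S_k$ of $(\tilde X_i)$ are mutually independent, and absorb the small-block residual into the truncation error. Third, I would apply the Chernozhukov--Chetverikov--Kato (CCK) Gaussian approximation for high-dimensional independent vectors to the $S_k$, and finally replace the Gaussian associated with $\sum_i \tilde X_i$ by $Z \sim N(0, \Sigma)$ through a covariance comparison in the $\ell^\infty$ norm combined with Nazarov's anti-concentration inequality, using Assumption \ref{assumption:1507172202} to bound the anti-concentration factor.

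The composite thresholds $L_1, L_2, W_1, W_2, W_3, N_1, N_2, N_3$ arise from jointly optimizing these three error pieces over $m$ and $b$. The truncation and small-block errors, after solving for $m$, produce the bounds $L_1$ (polynomial tail, via $\Theta_{q,\alpha}$) and $L_2$ (variance/sub-Gaussian tail, via $\Psi_{2,\alpha}$). The CCK remainder takes the form $(W_1/n_b)^{1/6}$ or $(W_2/n_b)^{1/6}$ in terms of the number of big blocks $n_b \asymp n/b$, where $W_1$ encodes the $(\log(pn))^{7}$ third/fourth-moment rate through $\Psi_{3,0}^{6}+\Psi_{4,0}^{4}$ and $W_2$ encodes the $(\log(pn))^{4}$ sub-exponential rate through $\Psi_{2,\alpha}^{2}$. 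The $N_i$ are then the largest block counts compatible with a vanishing CCK remainder, so the inequality $\max(L_i)\,\max(W_j) = o(\min(N_k))$ is exactly the feasibility of a single $b$ making all three errors vanish. In the weaker-dependence regime $\alpha > 1/2 - 1/q$ the truncation decay is fast enough that only $L_1, L_2, W_1, W_2, N_1, N_2$ enter, whereas in the stronger-dependence regime $\alpha < 1/2 - 1/q$ the truncation tail decays more slowly, which forces the extra threshold $W_3, N_3$ coming from a Fuk--Nagaev-type refinement of the maximal inequality.

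The hardest step is the stronger-dependence regime, where all three error terms decay only polynomially in $n$ and $p$ and must be balanced simultaneously through the coupling $m \le b$; the condition (\ref{eq:1508010523}) is precisely the feasibility of a single choice of $(m, b)$ making all errors vanish, and its verification is quite sensitive to the placement of the logarithmic factors $(\log p)^{1/2}, (\log(pn))^{3/2}, (\log(pn))^{7}$ in the constituent estimates. A further subtlety, present in both regimes, is that the CCK remainder depends on the third and fourth moments of the normalized block sums $S_k / \sqrt b$, which scale with $b$ and must be controlled uniformly through the functional dependence measure of $(\tilde X_i)$; this step relies on Burkholder--Rosenthal-type inequalities in the spirit of Section \ref{sec:probability inequalities} to keep the block moments proportional to the pointwise moments, and is what ultimately guarantees that the large-$b$ choice of the block length does not degrade the CCK rate.
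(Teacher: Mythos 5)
Your outline matches the paper's own proof essentially step for step: $m$-dependent approximation, alternating large/small blocks so the large-block sums of the truncated process are independent, the Chernozhukov--Chetverikov--Kato bound for independent vectors applied to those block sums, a Gaussian--Gaussian comparison of $\ell^\infty$ covariance distances to pass to $Z \sim N(0,\Sigma)$ together with anti-concentration under Assumption \ref{assumption:1507172202}, and finally a joint choice of $(m, M, w)$ whose feasibility is exactly (\ref{eq:1507142041}) resp.\ (\ref{eq:1508010523}) (this is the content of Lemmas \ref{lemmafordifference}--\ref{lemmafortwogaussian} and Theorem \ref{maintheorem}). The only slips are cosmetic: in the paper the $N_i$ arise as upper bounds on the total small-block length $wm$ (from making the small-block and truncation errors negligible against the $\eta\sqrt{\log p}$ anti-concentration term), not as block counts tied to the CCK remainder, and the CCK remainder used here is of order $w^{-1/8}$ plus a quantile term $w^{-1/2}(\log(pw/\lambda))^{3/2}u_m(\lambda)$ (which is what actually produces $W_2$ and $W_3$) rather than a $1/6$-power rate; neither affects the feasibility reading of the conditions.
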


\begin{rmk} (Optimality of our result on the allowed dimension $p$)
Assume $\alpha > 1/2 - 1/q$. In the special case with $\Psi_{q, \alpha} \asymp 1$ and $\Theta_{q, \alpha} \asymp p^{1/q}$, (\ref{eq:1507142040}) becomes
\begin{eqnarray}
\label{eq:1507142042}
p (\log (p n) )^{3q/2} = o(n^{q/2-1}),
\end{eqnarray}
which by elementary manipulations implies (\ref{eq:1507142041}), and hence the GA (\ref{eq:J021209}). It turns out that condition (\ref{eq:1507142042}), or equivalently $p (\log p)^{3q/2} = o(n^{q/2-1})$, is optimal up to a multiplicative logarithmic term. Consider the special case in which $X_{i j}$, $i, j \in \mathbb{Z},$ are i.i.d. symmetric random variables with $\E (X_{i j}^2) = 1$ and the tail probability $\P( X_{i j} \ge u) = u^{-p} \ell(u)$, $u \ge u_0$, where $\ell(u) = (\log u)^{-2}$. By \citet{nagaev1979large}, we have the expansion: for $y \ge \sqrt n$,
\begin{eqnarray}
\label{eq:J011029}
\P(X_{1 1} + \ldots + X_{n 1} \ge y) \sim n y^{-q} \ell(y) + 1 - \Phi(y/\sqrt n).
\end{eqnarray}
Let $M_n = X_{1 1} + \ldots + X_{n 1}$, $Z = (Z_1, \ldots, Z_p)^\top \sim N(0, \text{Id}_p)$ and assume
\begin{eqnarray}
\label{eq:1507142204}
n^{q/2-1} = o(p (\log n)^{-2} (\log p)^{-q/2}).
\end{eqnarray}
Then the Gaussian approximation (\ref{eq:J021209}) {\it does not hold}. To see this, let $u = (2 \log p)^{1/2}$. Then $p \P (|Z_1| \ge u) \to 0$, and, by (\ref{eq:J011029}) and (\ref{eq:1507142204}), $p \P( M_n \ge \sqrt n u) \to \infty$. Hence $\P^p( |M_n| \le \sqrt n u) \to 0$ and $\P^p(|Z_1| \le u) \to 1$, implying that
\begin{eqnarray*}
|\P( \sqrt n |\bar X_n|_\infty \le u) - \P(|Z|_\infty \le u)|
 &=&  |\P^p( |M_n| \le \sqrt n u) - \P^p(|Z_1| \le u)| \cr
 &=& | [1-2 \P( M_n \ge \sqrt n u)]^p - \P^p(|Z_1| \le u)|
 \to 1.
\end{eqnarray*}
Note that (\ref{eq:1507142204}) is equivalent to $n^{q/2-1} = o(p (\log p)^{-2-q/2})$, suggesting that (\ref{eq:1507142042}) is optimal up to a logarithmic term. \qed
\end{rmk}

Now suppose there exist $0 \le \kappa_1 \le \kappa_2$ such that $\Psi_{q, \alpha} \asymp p^{\kappa_1}$ and $\Theta_{q, \alpha} \asymp p^{\kappa_2}$, and $p^\tau \asymp n$. Elementary but tedious calculations show that, in the weaker dependence case $\alpha > 1/2 - 1/q$, if
\begin{eqnarray}
\label{eq:1507150901}
\tau > \max\left\{\frac{\kappa_2}{1/2-1/q}, \frac{2\kappa_1}{\alpha}+8\kappa_1, \frac{2}{q}\left(\frac{2\kappa_1}{\alpha}+8\kappa_1\right)+2\kappa_2\right\},
\end{eqnarray}
then conditions in (i) of Theorem \ref{th:1507140259} are satisfied, while for the stronger dependence case with $0 < \alpha < 1/2 - 1/q$, a larger sample size $n$ is required:
\begin{eqnarray}
\label{eq:1507150902}
\tau > \max\left\{\frac{\kappa_2}{\alpha}, \frac{2\kappa_1}{\alpha}+8\kappa_1,(1-2\alpha)\left(\frac{2\kappa_1}{\alpha}+8\kappa_1\right)+2\kappa_2 \right\}
\end{eqnarray}
The lower bounds in (\ref{eq:1507150901}) and (\ref{eq:1507150902}) are both non-decreasing of $\kappa_1, \kappa_2$ and non-increasing in $q, \alpha$.

Under (\ref{eq:1507142042}), the allowed dimension $p$ can only be at most a polynomial of $n$. To ensure the validity of GA in the ultra-high dimensional case with $\log p = o(n^c)$ with some $c > 0$, we need to consider the sub-exponential case in which $X_{i j}$ has finite moment with any order. For $\nu \ge 0$ and $\alpha \ge 0$, define the dependence-adjusted sub-exponential norm
\begin{eqnarray*}
\| X_{\cdot j} \|_{\psi_\nu, \alpha} = \sup_{q\ge 2}  { {\| X_{\cdot j} \|_{q, \alpha} } \over q^\nu} \mbox{ and }
 \Phi_{\psi_\nu, \alpha} = \max_{j \le p}  \| X_{\cdot j} \|_{\psi_\nu, \alpha}
\end{eqnarray*}
Let $L_3 = ( (\log p)^{1/\beta + 1/2}  \Phi_{\psi_\nu, \alpha} )^{1/\alpha}$, $N_4 = n (\log p)^{-1-2/\beta} \Phi_{\psi_\nu, 0}^{-2}$ and $W_4 = (\log (p n))^{3+2/\beta} \Phi_{\psi_\nu, 0}^2 + (\log (p n))^{4}$. Here  $\beta=2/(1+2\nu)$.

\begin{thm}
\label{th:1507140325}
Let Assumption \ref{assumption:1507172202} be satisfied. Assume that $\Phi_{\psi_\nu, \alpha}<\infty$ for some $\nu \geq 0$, $\alpha>0$ and
\begin{eqnarray}
\max(L_2, L_3) \max(W_1, W_4) = o(N_4),\quad  L_2^\alpha\max(W_1, W_4)=o(n).
\end{eqnarray}
Then the Gaussian Approximation (\ref{eq:J021209}) holds.
\end{thm}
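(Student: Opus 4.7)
The plan is to follow the standard $m$-dependent approximation plus big/small block scheme, but with the block length and truncation tuned to exploit the dependence-adjusted sub-exponential norm $\Phi_{\psi_\nu,\alpha}$, and then to apply the Chernozhukov--Chetverikov--Kato (CCK) Gaussian approximation for independent sums in the sub-exponential regime. Write $S_n=n^{-1/2}\sum_{i=1}^{n}X_i$ and use the martingale decomposition $X_i=\sum_{k=0}^{\infty}\mathcal{P}^{i-k}X_i$. Pick a truncation level $m$ and replace $X_i$ by the $m$-dependent approximation $\tilde X_i^{(m)}=\sum_{k=0}^{m}\mathcal{P}^{i-k}X_i$. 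The remainder $R_n=n^{-1/2}\sum_{i=1}^{n}(X_i-\tilde X_i^{(m)})$ has coordinatewise $L^q$ norm controlled by $\Theta_{m,q,j}/\sqrt{n}$, so the dependence-adjusted sub-exponential norm together with an exponential Markov bound gives $|R_n|_\infty=O_\P(\Phi_{\psi_\nu,\alpha}(m+1)^{-\alpha}(\log p)^{1/\beta+1/2})$. Choosing $m\asymp L_3=((\log p)^{1/\beta+1/2}\Phi_{\psi_\nu,\alpha})^{1/\alpha}$ makes this remainder smaller than the Gaussian anti-concentration scale $1/\sqrt{\log p}$ of $|Z|_\infty$, so by Nazarov's inequality the remainder can be absorbed into the sup-over-$u$ without changing the GA.

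Next, split $\{1,\dots,n\}$ into alternating big blocks of length $b$ and small blocks of length $m$. The small blocks contribute a partial sum whose $|\cdot|_\infty$ is handled again by Nazarov; the big-block sums $U_1,\ldots,U_M$ (with $M\asymp n/b$) are independent because of the $m$-dependence of $\tilde X_i^{(m)}$. I then apply the CCK Gaussian approximation for independent random vectors with sub-exponential coordinates to $M^{-1/2}\sum_{k=1}^{M}U_k/\sqrt{b}$; the rate in that theorem involves $(\log(pn))^{3+2/\beta}\Phi_{\psi_\nu,0}^{2}$ together with a $(\log(pn))^{7}$ third-moment term (this is exactly why $W_1$ and $W_4$ both appear). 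The CCK bound scales like $(b\cdot \max(W_1,W_4)/n)^{1/6}$, and the first assumption $\max(L_2,L_3)\max(W_1,W_4)=o(N_4)$ is precisely what forces this rate to $0$ after the block size $b\asymp\max(L_2,L_3)$ is plugged in; the companion assumption $L_2^{\alpha}\max(W_1,W_4)=o(n)$ ensures that the chosen block size is much smaller than $n$, so that $M\to\infty$ and the CCK theorem actually applies.

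Finally, CCK delivers a Gaussian with covariance $\hat\Sigma=\mathrm{Cov}(M^{-1/2}\sum_k U_k/\sqrt b)$, which only equals the block-truncated long-run covariance. A Gaussian comparison step (e.g.\ the Chernozhukov--Chetverikov--Kato max comparison lemma, whose error is $\|\hat\Sigma-\Sigma\|_{\max}\log p$) is used to replace the target covariance by $\Sigma$. The discrepancy $\|\hat\Sigma-\Sigma\|_{\max}$ is bounded by the tail sum of autocovariances beyond lag $m$ plus the small-block contribution, both of which are $O((m+1)^{-2\alpha}\Phi_{\psi_\nu,\alpha}^{2}+b^{-1}\Phi_{\psi_\nu,0}^{2})$ and are absorbed by the same two conditions; Assumption \ref{assumption:1507172202} ensures the subsequent rescaling by $D_0^{-1}$ is harmless. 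The main obstacle is the balancing: the block size $b$ must be large enough to allow the $m$-dependent remainder and the covariance mismatch to vanish, yet small enough that $b\max(W_1,W_4)\ll n\cdot(\log p)^{-1-2/\beta}\Phi_{\psi_\nu,0}^{-2}$ so that the CCK rate for the independent big-block sums still vanishes. The two displayed assumptions of the theorem are precisely the two inequalities that express this balance, and verifying that both can be met simultaneously (and that $m\le b$ in this regime) is the delicate computation that drives the proof.
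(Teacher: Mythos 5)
Your overall strategy is the same as the paper's: the paper proves this theorem by rerunning the proof of Theorem \ref{th:1507140259} through Theorem \ref{maintheorem}(ii), i.e.\ $m$-dependent approximation, alternating big/small blocks, the Chernozhukov--Chetverikov--Kato Gaussian approximation for the independent big-block sums (which is where $W_1$ and $W_4$ come from, via $R_3,R_4$ and $u^\diamond_m(\lambda)$), a Gaussian comparison step for the covariance mismatch, and anti-concentration to absorb the approximation error, with the sub-exponential tail bounds of Lemmas \ref{m-approximationtheorem2} and \ref{theorem for the sum exponential} replacing the Nagaev-type bounds. So the architecture is right.

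The genuine gap is in the parameter balancing, which is exactly the part you defer as ``the delicate computation.'' Your choice of big-block length $b\asymp\max(L_2,L_3)$ (with $m\asymp L_3$) does not work. In the correct tuning, $\max(L_2,L_3)$ is the scale of the \emph{small} blocks and of the $m$-dependence parameter: one needs $m\gg\max(L_2,L_3)$ (the $L_2$ part comes from requiring $\Psi_{2,\alpha}\Psi_{2,0}m^{-\alpha}\ll(\log p)^{-2}$ in the Gaussian comparison, not from the CCK rate), while the number of blocks $w$ is taken just above $\max(W_1,W_4)$, so the big-block length is $M\asymp n/w$, which is far larger than $\max(L_2,L_3)$. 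The condition $wm\ll N_4$ is what kills the small-block contribution $f^\diamond_2(\sqrt n\eta)$ at the anti-concentration scale $\eta\ll(\log p)^{-1/2}$, and the first displayed assumption is precisely the compatibility of $m\gg\max(L_2,L_3)$, $w\gg\max(W_1,W_4)$ and $wm\ll N_4$. With your choice $b\asymp\max(L_2,L_3)$ the small blocks occupy a non-vanishing fraction of the sample whenever $L_3\gtrsim L_2$, and even otherwise the total small-block length is $wm\asymp nL_3/\max(L_2,L_3)$, which is not $o(N_4)$ under the stated hypotheses, so their sum cannot be absorbed by Nazarov's inequality. You also misidentify the role of the second assumption $L_2^\alpha\max(W_1,W_4)=o(n)$: it is not about $M\to\infty$, but ensures that $w$ can exceed $\max(W_1,W_4)$ while the big-block length $M=n/w$ still dominates $L_2^\alpha=\Psi_{2,\alpha}\Psi_{2,0}(\log p)^2$, which is needed to make the within-block covariance bias $\Psi_{2,\alpha}\Psi_{2,0}v(M)$ negligible after the $\pi(\cdot)$ comparison; relatedly, your claimed $b^{-1}$ decay of that bias is only valid for $\alpha>1$ (it is $M^{-\alpha}$ for $\alpha<1$ and $\log M/M$ for $\alpha=1$). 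Repairing the proof therefore requires redoing the block-size selection as above; the two displayed conditions then fall out exactly as in the paper's proof of Theorem \ref{th:1507140259} with $f^*,u^*_m$ replaced by $f^\diamond,u^\diamond_m$.
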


\begin{proof}
The proof is similar to that of Theorem \ref{th:1507140259}, and thus is omitted.
\end{proof}

If $\Phi_{\psi_\nu, \alpha} \asymp 1$, then the ultra high-dimensional case with $\log p = o(n^c)$ with some $c>0$ is allowed, where specifically we can let
\begin{equation}
c=\left\{
\begin{array}{ll}
1/(8+2/\alpha+2/\beta), & 2/3 \leq \beta \leq 2\\
1/[7+(1/\beta+1/2)(1/\alpha+2)], & 1/2 \leq \beta <2/3 \\
1/[3+2/\beta+(1/\beta+1/2)(1/\alpha+2)], & 0 <\beta <1/2
\end{array}
\right..
\end{equation}

\subsection{Simultaneous Inference of Covariances}
Let $X_1, \ldots, X_n$ be i.i.d. $p$-dimensional vectors with mean $0$ and covariance matrix $\Gamma = \Gamma_0 = (\gamma_{j k})_{j, k=1}^p = \E (X_i X_i^\top)$. We can estimate $\Gamma$ by the sample covariance matrix $\hat \Gamma = (\hat \gamma_{jk})_{j, k=1}^p =  n^{-1} \sum_{i=1}^n X_i X_i^\top$.  To perform simultaneous inference on $\gamma_{j k}, 1\le j,k \le p$, one needs to derive asymptotic distribution of the maximum deviation $\max_{j, k \le p} | \hat \gamma_{jk} - \gamma_{jk}|$ or the normalized version $\max_{j, k \le p} | \hat \gamma_{jk} - \gamma_{jk}|/\tau_{j k}$; cf Equation (2) in \citet{Xiao20132899}. \citet{jiang2004} established the Gumbel convergence of the maximum deviation assuming that all entries of $X_i$ are also independent. See  \citet{Li2006} and \citet{liu2008} for some refined results. \citet{Xiao20132899} considered the extension which allows dependence among entries of $X_i$. However the latter paper requires that the vectors $X_1, \ldots, X_n$ are i.i.d. The problem of further extension to temporally dependent $X_i$ is open. In analyzing electrocorticogram data in the format of multivariate time series, \citet{PhysRevE.79.061916} proposed to use the maximum cross correlation between time series to identify edges that connect the corresponding nodes in a network, suggesting that an asymptotic theory for maximum deviations of sample covariances is needed.

Our Theorems \ref{th:1507140259} and \ref{th:1507140325} can be applied to the above problem of further extension to temporally dependent process $(X_i)$. Let $(X_i)$ be a mean zero $p$-dimensional stationary process of form (\ref{highdimensionrepresentation}).  To apply Theorems \ref{th:1507140259} and \ref{th:1507140325}, one needs to deal with the key issue of computing the functional dependence measure of the $p^2$-dimensional vector $\mathcal{X}_i = \mbox{vec}(X_i X_i^\top-\E(X_i X_i^\top))$. Interestingly, our framework allows a natural and elegant treatment. Let $a = (j, k)$, $j, k \le p$, and $\mathcal{X}_{i a} =  X_{ij}X_{ik} - \gamma_a$, where $ \gamma_a = \gamma_{j k} = \E (X_{ij}X_{ik})$. By H\"{o}lder's inequality, the functional dependence of the component process $(\mathcal{X}_{i a})_i$
\begin{eqnarray}\label{eq:1508220838}
\varphi_{i, q/2, a} &:= &\|X_{ij}X_{ik}-\E (X_{ij}X_{ik})-X_{ij, \{0\}}X_{ik, \{0\}}
 +\E(X_{ij, \{0\}}X_{ik, \{0\}})\|_{q/2} \cr
&\leq&2\|X_{ij}X_{ik}-X_{ij, \{0\}}X_{ik, \{0\}}\|_{q/2}\cr
&\leq&2\|X_{ij}(X_{ik}-X_{ik, \{0\}})\|_{q/2} +
 2\| (X_{ij} -X_{ij, \{0\}}  ) X_{ik, \{0\}}\|_{q/2}\cr
&\leq &2\|X_{ij}\|_q\delta_{i,q,k} + 2\|X_{ik}\|_q\delta_{i,q,j}.
\end{eqnarray}
Hence, we can have an upper bound of the dependence adjusted norm of $(\mathcal{X}_{i a})$
\begin{eqnarray}\label{eq:1508220839}
\|\mathcal{X}_{\cdot a}\|_{q/2,\alpha}
 &:=& \sup_{m \geq 0} (m+1)^\alpha \sum_{i=m}^\infty \varphi_{i,q/2,j,k}\cr
 &\leq &2\|X_{\cdot j}\|_{q,0}\|X_{\cdot k}\|_{q,\alpha}+2\|X_{\cdot k}\|_{q,0}\|X_{\cdot j}\|_{q,\alpha}.
\end{eqnarray}
Consequently, the uniform and the overall dependence adjusted norms of $\mathcal{X}_i$ are
\begin{eqnarray}\label{eq:1508220840}
&& \max_a \|\mathcal{X}_{\cdot a}\|_{q/2, \alpha} \leq 4 \Psi_{q,0}\Psi_{q,\alpha},\cr
&& \left(\sum_a \|\mathcal{X}_{\cdot a}\|_{q/2, \alpha}^{q/2}\right)^{2/q}
 \leq 4 \left(\sum_{j=1}^p\|X_{\cdot j}\|_{q,0}^{q/2}\right)^{2/q}
  \left(\sum_{j=1}^p\|X_{\cdot j}\|_{q,\alpha}^{q/2}\right)^{2/q}.
\end{eqnarray}
Similarly, the $\mathcal{L}^\infty$ dependence adjusted norm for the process $(\mathcal{X}_i)$ can be calculated by
\begin{equation}\label{eq:1508220841}
\||\mathcal{X}_{\cdot}|_\infty \|_{q/2, \alpha} \leq 4 \||X_{\cdot}|_\infty\|_{q, 0} \||X_{\cdot}|_\infty\|_{q, \alpha}.
\end{equation}
With (\ref{eq:1508220838})-(\ref{eq:1508220841}), conditions in Theorems \ref{th:1507140259} and \ref{th:1507140325} can be formulated accordingly, and under those conditions we can have the following Gaussian Approximation
\begin{eqnarray}\label{eq:1508220832}
 \sup_{u \ge 0} |\P( \sqrt n \max_a | \hat \gamma_a - \gamma_a| / \tau_a \ge u)
  - \P(\max_a |Z_a / \tau_a| \ge u)| \to 0,
\end{eqnarray}
where $Z = (Z_a)_a \sim N(0, \Sigma_\mathcal{X})$, $\Sigma_\mathcal{X}$ is the $p^2 \times p^2$ long-run covariance matrix of $(\mathcal{X}_i)_i$ and $(\tau_a^2)_a$ is the diagonal matrix of $\Sigma_\mathcal{X}$.

\section{Estimation of long-run covariance matrix}
\label{sec:estimation of long-run variance}

Given the realization $X_1, \ldots, X_n$, to apply the Gaussian approximation (\ref{eq:J021209}), we need to estimate the long-run covariance matrix $\Sigma$. Note that $\Sigma/(2\pi)$ is the value of the spectral density matrix of $(X_i)$ at zero frequency. In the one-dimensional case, there is a large literature concerning spectral density estimation; see for example \citet{anderson1971}, \citet{priestley1981}, \citet{rosenblatt1985}, \citet{brockwell1991}, \citet{liu2010asymptotics} among others. In the high-dimensional setting, \citet{chen2013} studied the regularized estimation of $\Gamma(0) = \E (X_0 X_0^\top)$.  Assume $\E X_i = 0$. We then consider the batched mean estimate
\begin{equation}
\label{estimate LRCM}
\hat{\Sigma}=\frac{1}{M w}\sum_{b=1}^w Y_bY_b^\top=\frac{1}{M w}\sum_{b=1}^w (\sum_{i \in L_b}X_i)(\sum_{i \in L_b}X_i)^\top.
\end{equation}
where the window $L_b = \{1+(b-1)M, \ldots, nM\}, b=1, \ldots, w$, the window size $|L_b| = M \to \infty$ and the number of blocks $w = \lfloor n/M \rfloor$. Theorems \ref{theoremforquadraticformunderpolynomial} and \ref{theoremforquadraticformunderexponential} concern the convergence of the above estimate for processes with finite polynomial and finite sub-exponentail dependence adjusted norms, respectively. The convergence rate depends in a subtle way on the temporal dependence characterized by $\alpha$ (cf. (\ref{eq:2015081932})), the uniform and the overall dependence adjusted norms $\Psi_{q, \alpha}$ and $\Upsilon_{q, \alpha}$, respectively, the same size $n$ and the dimension $p$.

For a random variable $X$, we define the operator $\E_0$ as $\E_0 (X) := X-\E X$.

\begin{thm}
\label{theoremforquadraticformunderpolynomial}
Assume $\Psi_{q, \alpha} < \infty$ with $q>4$ and $\alpha>0$, and $M=O(n^{\varsigma})$ for some $0< \varsigma <1$. Let $F_\alpha = w M$ (resp. $w M^{q/2-\alpha q/2}$ or $w^{q/4-\alpha q/2} M^{q/2-\alpha q/2}$) for $\alpha>1-2/q$ (resp. $1/2-2/q< \alpha <1-2/q$ or $\alpha < 1/2-2/q$). Then for $x \geq \sqrt{w}M \Psi_{q, \alpha}^2$, we have
 \begin{eqnarray}\label{eq:201508181936}
&&\P(n|\diag(\hat{\Sigma})-\E \diag(\hat{\Sigma})|_\infty\geq x)
\lesssim {{ F_\alpha \Upsilon_{q, \alpha}^q } \over {x^{q/2}}}+ p \exp\left(-\frac{C_{q, \alpha} x^2}{wM^2\Psi_{4, \alpha}^4}\right),
\cr
&& \P(n|\hat{\Sigma}-\E \hat{\Sigma}|_\infty \geq x)
\lesssim {{p F_\alpha \Upsilon_{q, \alpha}^q } \over {x^{q/2}}}
 + p^2 \exp\left(-\frac{C_{q, \alpha} x^2}{wM^2\Psi_{4, \alpha}^4}\right)
\end{eqnarray}
for all large $n$, where the constants in $\lesssim$ only depend on $\varsigma$, $\alpha$ and $q$.
\end{thm}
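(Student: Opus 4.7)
The plan is to bound each coordinate of $n(\hat\Sigma - \E\hat\Sigma)$ by a Fuk--Nagaev-type inequality, sum the polynomial tails across coordinates to gain the factor $\Upsilon_{q,\alpha}^q$ (rather than the cruder $p^2 \Psi_{q,\alpha}^q$), and union-bound the Bernstein-type tails to collect the factor $p^2$ (or $p$ on the diagonal). Since $w = \lfloor n/M\rfloor$ gives $n/(Mw) = 1 + O(1/w)$, the centered estimator decomposes as $n(\hat\Sigma_{jk} - \E\hat\Sigma_{jk}) = (n/(Mw))\sum_{b=1}^w Z_{b,jk}$ with $Z_{b,jk} := (Y_b)_j(Y_b)_k - \E[(Y_b)_j(Y_b)_k]$, so it suffices to control $|\sum_b Z_{b,jk}|$. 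The block sequence $(Z_{b,jk})_b$ is a stationary functional of $(X_i)$, and a H\"older argument in the spirit of (\ref{eq:1508220838})--(\ref{eq:1508220841}) bounds its $\mathcal{L}^{q/2}$ dependence-adjusted norm by sums of products $\|X_{\cdot j}\|_{q,0}\|X_{\cdot k}\|_{q,\alpha}$ and its symmetric counterpart. Summing over coordinates then yields $\sum_j \|Z_{\cdot,jj}\|_{q/2,\alpha}^{q/2} \lesssim \Upsilon_{q,\alpha}^q$ on the diagonal and $\sum_{j,k}\|Z_{\cdot,jk}\|_{q/2,\alpha}^{q/2} \lesssim p\,\Upsilon_{q,\alpha}^q$ for the full estimator (the extra $p$ arising from Cauchy--Schwarz across coordinates), which accounts for the discrepancy between the two displays in (\ref{eq:201508181936}).

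Next I would apply the Fuk--Nagaev-type tail inequality developed in Section \ref{sec:probability inequalities}, suited to weakly dependent sums with finite polynomial dependence-adjusted norms, to each $\sum_b Z_{b,jk}$. The inequality produces an entrywise polynomial tail of order $F_\alpha \|Z_{\cdot,jk}\|_{q/2,\alpha}^{q/2}/x^{q/2}$ together with a Bernstein-type tail $\exp(-Cx^2/V_{jk})$ for a variance proxy $V_{jk}$. The three expressions for $F_\alpha$ correspond to three regimes of the Rosenthal-type moment bound for $\sum_b Z_{b,jk}$: the thresholds $\alpha = 1-2/q$ and $\alpha = 1/2-2/q$ mark transitions where the dominant contribution to the $q/2$-moment passes from the variance term (giving $F_\alpha = wM$), to an intermediate between-block contribution (giving $F_\alpha = wM^{q/2-\alpha q/2}$), and finally to a slowly decaying block regime where $F_\alpha = w^{q/4-\alpha q/2}M^{q/2-\alpha q/2}$. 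Summing the polynomial contributions across coordinates delivers the first terms on the right-hand sides of (\ref{eq:201508181936}).

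The Bernstein part is controlled by $V_{jk} \lesssim wM^2\Psi_{4,\alpha}^4$, which follows from $\Var Z_{b,jk} \le \|(Y_b)_j\|_4^2\|(Y_b)_k\|_4^2 \lesssim M^2\Psi_{4,\alpha}^4$ together with the standard long-run variance expansion for the weakly dependent block process; the hypothesis $x \ge \sqrt{w}\,M\,\Psi_{q,\alpha}^2$ guarantees that the truncation level calibrating the Bernstein bound loses only lower-order corrections. A union bound over the $p$ (resp.\ $p^2$) diagonal (resp.\ full) coordinates then yields the stated exponential prefactor. The principal technical obstacle is establishing the $F_\alpha$-dependent moment estimate for $\sum_b Z_{b,jk}$ in the stronger-dependence regimes $\alpha \le 1-2/q$: the blocks $Y_b$ themselves miss the CLT rate $M^{1/2}$, between-block covariances decay only polynomially, and a naive Rosenthal loses polynomial factors in $w$ and $M$. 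Recovering the sharp $F_\alpha$ exponents requires carefully combining block decoupling with dependence-norm Rosenthal and Fuk--Nagaev inequalities tailored to each regime, which is precisely the content of Section \ref{sec:probability inequalities}.
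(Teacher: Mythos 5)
There is a genuine gap at the heart of your plan: applying a Fuk--Nagaev inequality to the block sequence $(Z_{b,jk})_b$ with its dependence-adjusted $\mathcal{L}^{q/2}$ norm cannot produce the claimed factors $F_\alpha$. A single block product has $\|Y_{bj}Y_{bk}\|_{q/2}\lesssim M\|X_{\cdot j}\|_{q,0}\|X_{\cdot k}\|_{q,0}$, i.e.\ the block-level norm carries a factor $M$ (your claim that the H\"older argument bounds $\|Z_{\cdot,jk}\|_{q/2,\alpha}$ by products of $X$-norms alone is off by this factor). Consequently any Rosenthal/Fuk--Nagaev bound taken at the block level gives a polynomial term of order $\sum_b\E|Z_{b,jk}|^{q/2}/x^{q/2}\asymp wM^{q/2}/x^{q/2}$, losing $M^{q/2-1}$ against the theorem's $wM/x^{q/2}$; and your attribution of $F_\alpha=wM$ to ``the variance term'' is arithmetically wrong, since $(\sum_b\E Z_{b,jk}^2)^{q/4}\asymp w^{q/4}M^{q/2}$ --- that term is what becomes the Gaussian factor $\exp(-Cx^2/(wM^2\Psi_{4,\alpha}^4))$, not $wM$. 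Moreover, Section \ref{sec:probability inequalities} only contains Nagaev-type inequalities for \emph{linear} sums (and their $m$-dependent approximations), so deferring the sharp $F_\alpha$ moment estimate for sums of block products to that section leaves the key step unproved.

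The paper obtains the sharp factors by a different two-step mechanism. First it truncates the underlying process at lag $M$ ($X_{ij,M}=\E(X_{ij}\mid\eps_{i-M},\ldots,\eps_i)$) and controls $T-T_M$, $T=\sum_b Y_{bj}Y_{bk}$, by a dyadic multiscale decomposition $T_{\tau_l}-T_{\tau_{l-1}}$ with $\tau_l=M2^l$; this is where the threshold $\alpha=1/2-2/q$ and the prefactor $w^{q/4-\alpha q/2}M^{q/2-\alpha q/2}$ originate. Second, for $T_M$ the truncated products over odd (resp.\ even) blocks are mutually independent, and Nagaev's Corollary 1.7 is applied to these independent sums: its leading term $\sum_b\P(|Y_{bj,M}Y_{bk,M}|\ge x/(4J))$ is then bounded by going \emph{inside} each block, via $\P(|UV|\ge y)\le\P(|U|\ge\sqrt y)+\P(|V|\ge\sqrt y)$ and the scalar Nagaev inequality (Lemma \ref{theorem for the sum polynomial}) applied to the length-$M$ sums $Y_{bj,M}$, which yields a per-block polynomial tail of order $Mx^{-q/2}$ (or $M^{q/2-\alpha q}x^{-q/2}$) rather than $M^{q/2}x^{-q/2}$ --- this is exactly what produces $F_\alpha=wM$ in the regime $\alpha>1-2/q$. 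The hypotheses $M=O(n^\varsigma)$ and $x\ge\sqrt{w}M\Psi_{q,\alpha}^2$ are used precisely to choose the exponent $J$ in Nagaev's inequality and render its middle term negligible; your proposal never invokes $\varsigma$, which is a symptom of this missing step. The coordinate summation giving $\Upsilon_{q,\alpha}^q$ versus $p\,\Upsilon_{q,\alpha}^q$ is fine, but it is the easy part.
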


\begin{proof}
Fix $1 \leq j, k \leq p$; let $T=\sum_{b=1}^w Y_{bj} Y_{bk}$, where $Y_{b j} = \sum_{i \in L_b}X_{i j}$.
For $\tau \geq 0$, define $X_{ij, \tau}=\E(X_{ij}|\eps_{i-\tau}, \ldots, \eps_{i})$, $Y_{bj, \tau}=\sum_{i \in L_b}X_{ij, \tau}$ and $T_{\tau}=\sum_{b=1}^w Y_{bj,\tau}Y_{bk, \tau}$.
We will first prove for any $x>0$
\begin{equation}
\P(|\E_0(T-T_{M})| \geq x)
\lesssim \left\{
\begin{array}{ll}
\label{differenceforquadraticform}
x^{-q/2}wM^{q/2-\alpha q/2}\xi_{q, \alpha}^{q/2}+E_{q, \alpha}(x), & \alpha >1/2-2/q \\
x^{-q/2}w^{q/4-\alpha q/2}M^{q/2-\alpha q/2}\xi_{q, \alpha}^{q/2}+E_{q, \alpha}(x), & \alpha <1/2-2/q \\
\end{array}
\right. ,
\end{equation}
where the constants in $\lesssim$ only depend on $\varsigma$, $\alpha$ and $q$, and
\begin{eqnarray*}
&& \xi_{q, \alpha}=\|X_{\cdot j}\|_{q, 0}\|X_{\cdot k}\|_{q, \alpha}+\|X_{\cdot k}\|_{q, 0}\|X_{\cdot j}\|_{q, \alpha},\\
&& E_{q, \alpha}(x)=\exp\{-C_{q, \alpha}(wM^{2-2\alpha}\xi_{4, \alpha}^2)^{-1}x^{2}\}.
\end{eqnarray*}

Following the argument in the proof of Lemma \ref{lemma for m-approximation, polynomial tail}, let $L=\lfloor (\log w)/ (\log 2)\rfloor$, $\varpi_l=2^l$, $1 \leq l <L$, $\varpi_L=w$ and $\tau_l= M \varpi_l$ for $1 \leq l \leq L$. Let $\varpi_0 = 1$ and $\tau_0 = M$. Write
\begin{equation}
\label{T-T_{M}}
T-T_{M} = T-T_{Mw} + \sum_{l=1}^L V_{w, l}, \mbox{ where } V_{w, l}=T_{\tau_l}-T_{\tau_{l-1}}.
\end{equation}
By the argument in Lemma 9 of \citet{xiao2012covariance}, we have
\begin{eqnarray}
\| \E_0(T-T_{Mw})\|_{q/2}
&\leq & C_q M\sqrt{w}(\Delta_{0, q, j}\Delta_{Mw+1, q, k}+\Delta_{Mw+1, q, j}\Delta_{0, q, k})\nonumber \\ \label{appximationmoment}
&\leq & C_q M\sqrt{w}(Mw)^{-\alpha}\xi_{q, \alpha}
\end{eqnarray}
for some constant $C_q > 0$. By Markov's inequality, for $x > 0$,
\begin{equation}
\label{T-T_{Mw}}
\P(|\E_0(T-T_{Mw})|\geq x)\leq \frac{C_q M^{q/2-\alpha q/2}w^{q/4-\alpha q/2}\xi_{q, \alpha}^{q/2}}{x^{q/2}}.
\end{equation}
By the same argument for proving (\ref{appximationmoment}), we have
\begin{equation*}
\|\E_0(V_{w, l})\|_{q/2} \leq C_q M\sqrt{w} \tau_l^{-\alpha}\xi_{q, \alpha}.
\end{equation*}
Let $c=q/4-1-\alpha q/2$, $\lambda_l= 3 l^{-2} \pi^{-2}$ if $1 \leq l \leq L/2$ and $\lambda_l= 3 (L+1-l)^{-2} \pi^{-2}$ if $L/2 < l \leq L$. Then $\sum_{l=1}^L \lambda_l < 1$.  By the Nagaev (1979) inequality, it follows that
\begin{eqnarray}
\P(|\sum_{l=1}^L \E_0(V_{w, l})| \geq x) &\leq& \sum_{l=1}^L \P(|\E_0(V_{w, l})| \geq \lambda_lx) \nonumber \\
& \leq & \sum_{l=1}^L \frac{C_1 w \varpi_l^{-1}(M{\varpi_l}^{1/2} \tau_l^{-\alpha})^{q/2}\xi_{q, \alpha}^{q/2}}{(\lambda_l x)^{q/2}}+4 \sum_{l=1}^L \exp \left(-\frac{C_2 (\lambda_l x)^2\tau_l^{2 \alpha}}{wM^2\xi_{4, \alpha}^{2}} \right)\nonumber \\
&\leq & \frac{C_3wM^{q/2-\alpha q/2}\xi_{q, \alpha}^{q/2}}{x^{q/2}}\sum_{l=1}^L \frac{\varpi_l^{c}}{\lambda_l^{q/2}}+C_4 \sum_{l=1}^L E_{q, \alpha}(\lambda_l \varpi_l^\alpha x). \label{V_{w,l}}
\end{eqnarray}
Elementary calculations show that
\begin{equation}
\label{poly}
\sum_{l=1}^L \frac{\varpi_l^{c}}{\lambda_l^{q/2}} \leq C_5 \text{ for } c<0 \text{ and } \sum_{l=1}^L \frac{\varpi_l^{c}}{\lambda_l^{q/2}} \leq C_6 \varpi_L^c=C_6 w^c \text{ for } c>0.
\end{equation}
Furthermore, we can use (\ref{exponentialterm}) to obtain
\begin{equation}
\label{exp}
\sum_{l=1}^L E_{q, \alpha}(\lambda_l \varpi_l^\alpha x) \lesssim E_{q, \alpha} (x).
\end{equation}
Putting (\ref{T-T_{M}}), (\ref{T-T_{Mw}}), (\ref{V_{w,l}}), (\ref{poly}) and (\ref{exp}) together, we then have (\ref{differenceforquadraticform}).

Now it suffices to consider $\P(|\E_0(T_M)|\geq x)$. Observe that $(Y_{bj, M}Y_{bk, M})_{b \text{ is odd}}$ are independent and so are $(Y_{bj, M}Y_{bk, M})_{b \text{ is even}}$. By Corollary 1.7 of \citet{nagaev1979large}, for any $J > 1$,
\begin{eqnarray*}
\P(|\E_0(T_M)|\geq x) &\leq & \sum_{b=1}^w \P(|\E_0(Y_{bj, M}Y_{bk, M}
)|\geq x/(2J))+2\left(\frac{\sum_{b=1}^w \|\E_0(Y_{bj, M}Y_{bk, M})\|_{q/2}^{q/2}}{J x^{q/2}}\right)^{J}\\
&&+4\exp\left\{-\frac{C_q x^2}{\sum_{b=1}^w \|\E_0(Y_{bj, M}Y_{bk, M})\|_2^2}\right\}.
\end{eqnarray*}
Note that $\|Y_{bj, M}\|_{q} \leq C_q \sqrt M \|X_{\cdot j}\|_{q, 0}$. Hence for $1 \leq b \leq w$, $1 \leq j, k\leq p$ and $q \geq 4$,
\begin{equation*}
\|\E_0(Y_{bj, M}Y_{bk, M})\|_{q/2} \leq 2\|Y_{bj, M}Y_{bk, M}\|_{q/2} \leq 2\|Y_{bj, M}\|_{q}\|Y_{bk, M}\|_{q}\leq C_q M \|X_{\cdot j}\|_{q, 0}\|X_{\cdot k}\|_{q, 0}.
\end{equation*}
Since
\begin{equation*}
\E |Y_{bj, M}Y_{bk, M}| \leq \|Y_{bj, M}\|_2 \|Y_{bk, M}\|_2  \leq M \|X_{\cdot j}\|_{2, 0}\|X_{\cdot k}\|_{2, 0}
 \le {x \over \sqrt w},
\end{equation*}
we have
\begin{eqnarray*}
\P(|\E_0(T_M)|\geq x)& \leq& \sum_{b=1}^w \P(|Y_{bj, M}Y_{bk, M}|\geq x/(4J)) \\
&&+ 2\left(\frac{wM^{q/2}\|X_{\cdot j}\|_{q,0}^{q/2}\|X_{\cdot k}\|_{q,0}^{q/2}}{J x^{q/2}}\right)^{J}
 +4\exp\left(-\frac{C_q x^2}{wM^2\Psi_{4,0}^4}\right).
\end{eqnarray*}
Recall that $M=O(n^\varsigma)$ with $0 < \varsigma < 1$. Let $J = 1+ (2q-2) (q-4)^{-1} (1- \varsigma)^{-1}$. Since  $x \geq \sqrt{w}M \|X_{\cdot j}\|_{q,0}\|X_{\cdot k}\|_{q,0}$, elementary calculations show that for sufficiently large $n$ the second term in the above expression is no greater than $C_J wM\|X_{\cdot j}\|_{q,0}^{q/2}\|X_{\cdot k}\|_{q,0}^{q/2}/x^{q/2}$. As for the first term, we have
\begin{equation*}
\P(|Y_{bj, M}Y_{bk, M}| \geq x/(4J))  \leq  \P(|Y_{bj, M}| \geq \sqrt{x/(4J)})+\P(|Y_{bk, M}| \geq \sqrt{x/(4J)}).
\end{equation*}
By Lemma \ref{theorem for the sum polynomial}, for $\alpha>1/2-1/q$ and $\alpha <1/2-1/q$, respectively, we have
\begin{equation*}
\P(|Y_{bj, M}| \geq \sqrt{x}) \leq \left\{
\begin{array}{ll}
C_{q, \alpha}x^{-q/2}M\|X_{\cdot j}\|_{q, \alpha}^q+ C_{q, \alpha} \exp\left(-\frac{C_{q, \alpha}x}{M \|X_{\cdot j}\|_{2, \alpha}^2} \right), \\
C_{q, \alpha}x^{-q/2}M^{q/2-\alpha q}\|X_{\cdot j}\|_{q, \alpha}^q+ C_{q, \alpha} \exp\left(-\frac{C_{q, \alpha}x}{M \|X_{\cdot j}\|_{2, \alpha}^2} \right). \\
\end{array}
\right.
\end{equation*}
A similar inequality holds for $\P(|Y_{bk, M}| \geq \sqrt{x})$. Let $\phi_{q, \alpha}=\|X_{\cdot j}\|_{q, \alpha}^q+\|X_{\cdot k}\|_{q, \alpha}^q$. Hence, it follows that  for $\alpha>1/2-1/q$ and $\alpha <1/2-1/q$ respectively,
\begin{equation}
\P(|\E_0(T_M)|\geq x) \leq \left\{
\begin{array}{ll}
\label{maintermforquadraticform}
C_{q, \alpha}x^{-q/2} wM \phi_{q, \alpha} + C_{q, \alpha} \exp\left(-\frac{C_{q, \alpha}x^2}{wM^2 \Psi_{4, \alpha}^4} \right), \\
C_{q, \alpha}x^{-q/2}w M^{q/2-\alpha q}\phi_{q, \alpha}+ C_{q, \alpha} \exp\left(-\frac{C_{q, \alpha}x^2}{wM^2 \Psi_{4, \alpha}^4} \right). \\
\end{array}
\right.
\end{equation}
Combining (\ref{differenceforquadraticform}) and (\ref{maintermforquadraticform}), and noticing that $\xi_{q, \alpha}^{q/2} \leq C_q\phi_{q, \alpha}$, it follows that
\begin{eqnarray*}
 \P(|\E_0(T)|\geq x) \le C_{q, \alpha} x^{-q/2} F_\alpha \phi_{q, \alpha}
  +C_{q, \alpha} \exp\left(-\frac{C_{q, \alpha} x^2}{wM^2\Psi_{4, \alpha}^4}\right).
\end{eqnarray*}
which implies (\ref{eq:201508181936}) by the Bonferroni inequality by summing over $j$ and $k$.
\end{proof}

Under stronger moment conditions, we can have an exponential inequality.

\begin{thm}
\label{theoremforquadraticformunderexponential}
Assume $\Phi_{\psi_\nu, 0} < \infty$ for some $\nu \geq 0$. Then for all $x >0$, we have
\begin{eqnarray}
&& \P(n|\diag(\hat{\Sigma})-\E \diag(\hat{\Sigma})|_\infty \geq x) \lesssim   p \exp\left(-\frac{x^\gamma}{4e\gamma (\sqrt{w}M\Phi_{\psi_\nu, 0}^2)^\gamma }\right), \label{exponentialforquadraticform1}\\
&&\P(n|\hat{\Sigma}-\E \hat{\Sigma}|_\infty \geq x) \lesssim  p^2 \exp\left(-\frac{x^\gamma}{4e\gamma (\sqrt{w}M\Phi_{\psi_\nu, 0}^2)^\gamma }\right), \label{exponentialforquadraticform2}
\end{eqnarray}
where $\gamma=1/(1+2 \nu)$ and the constants in $\lesssim$ only depend on $\nu$.
\end{thm}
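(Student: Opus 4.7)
The strategy parallels the proof of Theorem \ref{theoremforquadraticformunderpolynomial}, but the polynomial Rosenthal/Nagaev bounds are replaced by their sub-exponential counterparts; the $\gamma$-sub-exponential tail is obtained by bounding $\|T-\E T\|_q$ polynomially in $q$ and then optimizing via Markov's inequality. Fix $1\leq j, k \leq p$, let $T = T_{jk}=\sum_{b=1}^w Y_{bj}Y_{bk}$, so $n\hat{\Sigma}_{jk}=T$. The bound \eqref{exponentialforquadraticform1}--\eqref{exponentialforquadraticform2} reduces, after a Bonferroni union bound over $j=k$ (respectively, over all pairs $(j,k)$), to establishing
$$\|T-\E T\|_q \;\leq\; C\,q^{1+2\nu}\,\sqrt{w}\,M\,\Phi_{\psi_\nu,0}^{2},\qquad\text{for all }q\geq 2,$$
since Markov's inequality $\P(|T-\E T|\geq x)\leq(\|T-\E T\|_q/x)^q$ together with the choice $q=(x/(eC\sqrt{w}M\Phi_{\psi_\nu,0}^{2}))^{\gamma}$, $\gamma=1/(1+2\nu)$, yields the desired exponent $-x^\gamma/[4e\gamma\,(\sqrt{w}M\Phi_{\psi_\nu,0}^{2})^\gamma]$ after tracking the constants.

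To obtain the moment bound, the definition of $\Phi_{\psi_\nu,0}$ gives $\|X_{\cdot j}\|_{q,0}\leq q^\nu\Phi_{\psi_\nu,0}$ for every $q\geq 2$. Applying the Burkholder-type inequality for stationary sums in the physical-dependence framework (Theorem~1 of \citet{Wu2005}; cf.\ Lemma \ref{theorem for the sum polynomial}) one gets
$$\|Y_{bj}\|_q \;\leq\; C\sqrt{qM}\,\|X_{\cdot j}\|_{q,0}\;\leq\; C\sqrt{M}\,q^{\nu+1/2}\,\Phi_{\psi_\nu,0},$$
and H\"older's inequality produces $\|Y_{bj}Y_{bk}\|_q\leq C M\,q^{1+2\nu}\,\Phi_{\psi_\nu,0}^{2}$. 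Next, treat the block-level process $(Y_{bj}Y_{bk})_b$ as stationary with physical-dependence structure induced from $(X_{ij})$; carry out $m$-approximation at scale $M$ (setting $Y_{bj,M}=\sum_{i\in L_b}\E(X_{ij}\mid\eps_{i-M},\ldots,\eps_i)$), which renders the subsequence $(Y_{bj,M}Y_{bk,M})_{b\text{ odd}}$ (respectively even) independent, and then apply a Rosenthal-type inequality to each of the two independent subsums. Combining the block-level moment estimate with the $\sqrt{wq}$ factor produced by Rosenthal and absorbing the approximation error $\|T-T_M\|_q$ via the dyadic telescoping scheme of Theorem \ref{theoremforquadraticformunderpolynomial} yields the required bound $\|T-\E T\|_q\leq C q^{1+2\nu}\sqrt{w}M\Phi_{\psi_\nu,0}^{2}$.

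The main obstacle is that the hypothesis supplies only $\Phi_{\psi_\nu,0}<\infty$ and no quantitative decay rate for $\Delta_{m,q,j}$ in $m$, since $\alpha=0$. Consequently the naive $m$-approximation error is of the same order as the main term and cannot be made small by choosing $M$ large; one must close the bound using only the crude uniform estimate $\Delta_{m,q,j}\leq\|X_{\cdot j}\|_{q,0}\leq q^\nu\Phi_{\psi_\nu,0}$ and rely on the martingale/physical-dependence structure of the stationary sum rather than on a rate of approximation. A secondary nuisance is careful bookkeeping of the multiplicative constants through the Markov optimization in $q$ so that the precise exponent $1/(4e\gamma)$ emerges; this is handled by absorbing lower-order $q$-dependent factors into the polynomial prefactor $q^{1+2\nu}$ before optimizing.
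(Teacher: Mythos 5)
Your high-level reduction is the right one and matches the paper's in spirit: prove a moment bound of the form $\|T-\E T\|_{q/2}\lesssim q^{1+2\nu}\sqrt{w}M\Phi_{\psi_\nu,0}^2$ uniformly in $q\ge 2$, then convert it to a $\gamma$-sub-exponential tail (your Markov-with-optimized-$q$ step is equivalent to the paper's MGF argument for $R_{jk}^\gamma$, which mimics Lemma \ref{m-approximationtheorem2}), and finish with Bonferroni over $j$ or $(j,k)$. The block moment estimate $\|Y_{bj}\|_q\lesssim \sqrt{M}\,q^{\nu+1/2}\Phi_{\psi_\nu,0}$ and the H\"older step are also fine.

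The gap is in the only step that actually carries the weight: how you prove the moment bound for the dependent quadratic form. You propose the machinery of Theorem \ref{theoremforquadraticformunderpolynomial} ($M$-dependent approximation, odd/even independent block products, Rosenthal, dyadic telescoping for $T-T_M$), and then you yourself observe that this machinery breaks when only $\Phi_{\psi_\nu,0}<\infty$ is assumed: with $\alpha=0$ the telescoping increments $\|\E_0(V_{w,l})\|_{q/2}$ carry no factor $\tau_l^{-\alpha}$, so the approximation error is of the same order as the main term (and summing the $\log w$ dyadic levels even loses a logarithm, which would contaminate the stated constant $\sqrt{w}M\Phi_{\psi_\nu,0}^2$ in the exponent). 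Your proposed fix, ``rely on the martingale/physical-dependence structure rather than on a rate of approximation,'' names the missing ingredient but does not supply it. The paper's proof avoids approximation entirely: it applies Burkholder to $\E_0 T$ with the predictive projections $\mathcal{P}^l$, as in (\ref{eq:1508182136}), bounds $\|\mathcal{P}^l(Y_{bj}Y_{bk})\|_{q/2}$ by the coupling inequality $\|Y_{bj}\|_q\sum_h\delta_{h-l,q,k}+\|Y_{bk,\{l\}}\|_q\sum_h\delta_{h-l,q,j}$, and then sums over $l$ using the elementary factorization $\sum_{l}\bigl(\sum_{h=1}^{wM}\delta_{h-l,q,k}\bigr)^2\le \Delta_{0,q,k}\sum_l\sum_h\delta_{h-l,q,k}\le wM\,\Delta_{0,q,k}^2$, which yields (\ref{eq:1508182144}), i.e.\ $\|\E_0 T\|_{q/2}\lesssim q\,\sqrt{w}M\|X_{\cdot j}\|_{q,0}\|X_{\cdot k}\|_{q,0}$ with no decay rate in $m$ required and no logarithmic loss. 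Without this (or an equivalent) argument, your central moment inequality is asserted rather than proved, so the proposal as written does not establish the theorem.
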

\begin{proof}
Let $T=\sum_{b=1}^w Y_{bj} Y_{bk}$. By the Burkholder inequality, we have
\begin{eqnarray}\label{eq:1508182136}
\| \E_0 T \|_{q/2}^2 \le (q/2-1) \sum_{l=-\infty}^{w M} \|{\cal P}^l T \|_{q/2}^2
 \le (q/2-1) \sum_{l=-\infty}^{w M} \left( \sum_{b=1}^w  \|{\cal P}^l Y_{bj} Y_{bk} \|_{q/2} \right)^2
\end{eqnarray}
By Theorem 3 in \citet{wu2011asymptotic}, $\| Y_{bj}  \|_q \le (q-1)^{1/2} \sqrt M \|X_{\cdot j}\|_{q, 0}$. Since $\| Y_{bk} - Y_{bk, \{l\}} \|_{q}  \le \sum_{h=1+(b-1)M}^{b M} \delta_{h-l, q, k}$, we have
\begin{eqnarray*}
\sum_{b=1}^w  \|{\cal P}^l Y_{bj} Y_{bk} \|_{q/2}
 &\le& \sum_{b=1}^w   \| Y_{bj} Y_{bk} - Y_{bj, \{l\} } Y_{bk, \{l\}} \|_{q/2}  \cr
  &\le& \sum_{b=1}^w  ( \| Y_{bj} \|_q \| Y_{bk} - Y_{bk, \{l\}} \|_{q}  +
 \| Y_{bj} - Y_{bj, \{l\} } \|_q  \|Y_{bk, \{l\}}\|_q ) \cr
  &\le&  (q-1)^{1/2} \sqrt M \left(\|X_{\cdot j}\|_{q, 0} \sum_{h=1}^{w M} \delta_{h-l, q, k}
  + \|X_{\cdot k}\|_{q, 0} \sum_{h=1}^{w M} \delta_{h-l, q, j} \right),
\end{eqnarray*}
which by (\ref{eq:1508182136}) implies that
\begin{eqnarray}\label{eq:1508182144}
\| \E_0 T \|_{q/2}^2 \le (q/2-1) \sum_{l=-\infty}^{w M} \|{\cal P}^l T \|_{q/2}^2
 \le (q-2)(q-1) w M^2 \|X_{\cdot j}\|_{q, 0}^2 \|X_{\cdot k}\|_{q,0}^2.
 \end{eqnarray}
Let $R_{j k} = \E_0 T /(\sqrt w M)$. Similarly as the argument for proving Lemma \ref{m-approximationtheorem2},
if $\gamma h \geq 2$, it follows that $\|R_{jk}\|_{\gamma h} \leq (2\gamma h-1)(2\gamma h)^{2 \nu}\|X_{\cdot j}\|_{\psi_\nu, 0}\|X_{\cdot k}\|_{\psi_\nu, 0}$. Let $\tau_0=(2e\gamma \|X_{\cdot j}\|_{\psi_\nu, 0}^\gamma\|X_{\cdot k}\|_{\psi_\nu, 0}^\gamma)^{-1}$. Notice that $-2\nu=1-1/\gamma$. Then
\begin{eqnarray*}
\frac{t^h\|R_{jk}^\gamma\|^h_h}{h!} &\leq& \frac{t^h(2\gamma h-1)^{\gamma h}(2\gamma h)^{2 \nu \gamma h}\|X_{\cdot j}\|_{\psi_\nu, 0}^{\gamma h}\|X_{\cdot k}\|_{\psi_\nu, 0}^{\gamma h}}{C_1(h/e)^h a_h^{-1}}\\
&\leq & \frac{a_h t^h (2\gamma h -1)^{\gamma h}}{C_1 \tau_0^h(2\gamma h)^{\gamma h}} \leq \frac{a_h t^h}{C_1 \sqrt{e} \tau_0^h}.
\end{eqnarray*}
If $\gamma h <2$, then $\|R_{jk}\|_{\gamma h} \leq \|R_{jk}\|_2 \leq \sqrt{6}\cdot 4^{2 \nu} \|X_{\cdot j}\|_{\psi_\nu, 0}\|X_{\cdot k}\|_{\psi_\nu, 0}$. So we have
\begin{eqnarray*}
\E [\exp(t R_{jk}^\gamma)]&\leq & 1+ \sum_{1 \leq h <2/\gamma}\frac{t^h(\sqrt{6}\cdot 4^{2 \nu} \|X_{\cdot j}\|_{\psi_\nu, 0}\|X_{\cdot k}\|_{\psi_\nu, 0})^{\gamma h}}{h!}+\sum_{h \geq 2/\gamma}\frac{a_h t^h}{C_1 \sqrt{e} \tau_0^h}\\
&\leq & 1+C_\gamma \sum_{h=1}^\infty a_h\frac{t^h}{\tau_0^h} \leq 1+ C_\gamma \frac{t/\tau_0}{(1-t/\tau_0)^{1/2}}.
\end{eqnarray*}
By choosing $t=\tau_0/2$, and applying the Markov inequality and the Bonferroni inequality, (\ref{exponentialforquadraticform1}) and (\ref{exponentialforquadraticform2}) are obtained.
\end{proof}

\begin{rmk}
An alternative estimate of $\Sigma$, which also works with unknown mean $\E X_i$, is
\begin{equation}
\label{long run covariance estimate minus mean}
\tilde{\Sigma}=\frac{1}{w M}\sum_{b=1}^w (\sum_{i \in L_b}X_i-M \bar{X})(\sum_{i \in L_b}X_i-M\bar{X})^\top,
\end{equation}
where $\bar{X} = (w M)^{-1}\sum_{i=1}^{w M} X_i$, $w = \lfloor n/M \rfloor$. Then $|\tilde{\Sigma} - \hat{\Sigma}|_\infty = M |\bar X|_\infty^2$. Applying Lemma \ref{theorem for the sum polynomial} to $\sum_{i=1}^{w M} X_{i j}$, one can conclude that Theorems \ref{theoremforquadraticformunderpolynomial} and \ref{theoremforquadraticformunderexponential} still hold for $\tilde{\Sigma}$ with $\E\hat{\Sigma}$ therein replaced by $\Sigma_M:=\sum_{i=-M}^M (1-|i|/M) \Gamma_i$ (which equals to $\E\hat{\Sigma}$ if $\E X_i = 0$).
\end{rmk}

\begin{cor}
\label{corollary for Sigma}
(i) Under conditions in Theorem \ref{theoremforquadraticformunderpolynomial},  we have $| \tilde{\Sigma} - \Sigma|_\infty = O_\P(r_n)$, where
 \begin{equation*}
 r_n =
n^{-1} \max\{p^{2/q} F^{2/q}_\alpha \Upsilon_{q, \alpha}^2, \, \sqrt w M \Psi_{4, \alpha}^2 \sqrt{\log p}, \,
 \sqrt w M \Psi_{q, \alpha}^2 \}  + \Psi_{2,0}\Psi_{2, \alpha} v(M),
 \end{equation*}
where $v(M)=1/M$ if $\alpha>1$, $v(M)=\log M/M$ if $\alpha=1$ and $v(M)=1/M^\alpha$ if $0<\alpha <1$. (ii) Under conditions in Theorem \ref{theoremforquadraticformunderexponential}, we have $| \tilde{\Sigma} - \Sigma|_\infty = O_\P(r_n)$ with $r_n =  n^{-1} \sqrt{w}M \Phi_{\psi_\nu,0}^2 (\log p)^{1/\gamma} + \Psi_{2,0}\Psi_{2, \alpha} v(M)$.
\end{cor}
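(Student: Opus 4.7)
The plan is to use the triangle decomposition
\[
\tilde{\Sigma} - \Sigma = (\tilde{\Sigma} - \hat{\Sigma}) + (\hat{\Sigma} - \E\hat{\Sigma}) + (\E\hat{\Sigma} - \Sigma),
\]
and bound each of the three pieces in $|\cdot|_\infty$ separately. Since $\E X_i = 0$, the remark immediately preceding the corollary identifies $\E\hat\Sigma$ with the Bartlett-window mean $\Sigma_M := \sum_{|k|<M}(1-|k|/M)\Gamma(k)$, so the last piece is purely a deterministic bias. The middle piece is the stochastic error controlled by Theorem \ref{theoremforquadraticformunderpolynomial} in case (i) and Theorem \ref{theoremforquadraticformunderexponential} in case (ii). For the first piece I will use the identity $|\tilde{\Sigma} - \hat{\Sigma}|_\infty = M|\bar X|_\infty^2$ from that same remark, then apply Lemma \ref{theorem for the sum polynomial} componentwise to the partial sums $\sum_{i=1}^{wM}X_{ij}$ together with a union bound over $j \le p$; this should give $M|\bar X|_\infty^2 = O_\P(M\Psi_{2,\alpha}^2\log p / n)$ up to a polynomial-tail correction, of smaller order than the main stochastic error.

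For the stochastic piece in case (i), I invert the second tail inequality in Theorem \ref{theoremforquadraticformunderpolynomial}: requiring the polynomial term $pF_\alpha\Upsilon_{q,\alpha}^q/x^{q/2}$ to be $O(1)$ forces $x \gtrsim p^{2/q}F_\alpha^{2/q}\Upsilon_{q,\alpha}^2$, requiring $p^2\exp(-C_{q,\alpha}x^2/(wM^2\Psi_{4,\alpha}^4)) = O(1)$ forces $x \gtrsim \sqrt w M\Psi_{4,\alpha}^2\sqrt{\log p}$, and the theorem's side constraint forces $x \ge \sqrt w M\Psi_{q,\alpha}^2$. Taking $x$ equal to the maximum of these three and dividing by $n$ produces exactly the $n^{-1}\max\{\cdot,\cdot,\cdot\}$ portion of $r_n$. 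For case (ii), I will invert the exponential tail in Theorem \ref{theoremforquadraticformunderexponential} against the $p^2$-entry union bound to obtain $|\hat\Sigma - \E\hat\Sigma|_\infty = O_\P(n^{-1}\sqrt w M\Phi_{\psi_\nu,0}^2(\log p)^{1/\gamma})$.

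For the bias I decompose
\[
\Sigma - \Sigma_M = \sum_{|k|\ge M}\Gamma(k) + M^{-1}\sum_{|k|<M}|k|\Gamma(k),
\]
and bound each entry via the projection identity $\gamma_{j_1 j_2}(m) = \E(X_{0,j_1}\,\E(X_{m,j_2}|\calF^0))$ together with $\|\E(X_{m,j_2}|\calF^0)\|_2 \le \Delta_{m,2,j_2} \le \Psi_{2,\alpha}/(m+1)^\alpha$, which yields $|\gamma_{j_1 j_2}(m)| \le \Psi_{2,0}\Psi_{2,\alpha}/(m+1)^\alpha$. Plain summation then gives $O(\Psi_{2,0}\Psi_{2,\alpha}M^{-\alpha})$ for $0 < \alpha < 1$ and an extra $\log M$ at $\alpha = 1$; for $\alpha > 1$ I will use an Abel summation exploiting the absolute summability $\sum_k |\gamma_{j_1 j_2}(k)| < \infty$ to refine the rate to $O(\Psi_{2,0}\Psi_{2,\alpha}/M)$, matching the definition of $v(M)$. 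Collecting the three pieces and absorbing $|\tilde\Sigma - \hat\Sigma|_\infty$ into the stochastic rate gives $r_n$.

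The hard part will be twofold: first, simultaneously calibrating $x$ against the three competing constraints (polynomial tail, sub-Gaussian tail, and the side condition $x \ge \sqrt w M\Psi_{q,\alpha}^2$) in Theorem \ref{theoremforquadraticformunderpolynomial} so that the resulting $O_\P$ bound matches the stated maximum rather than being inflated by one of the constraints dominating; second, sharpening the bias bound in the regime $1 < \alpha < 2$ where the crude entrywise bound only gives $M^{1-\alpha}$, for which the Abel/Kronecker-type argument built on absolute summability is essential to recover the claimed $1/M$ rate in $v(M)$.
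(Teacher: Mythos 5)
Your overall architecture matches the paper's: decompose $\tilde\Sigma-\Sigma$ into the centering term $\tilde\Sigma-\hat\Sigma$, the stochastic term $\hat\Sigma-\E\hat\Sigma$, and the bias $\Sigma_M-\Sigma$, then invert the tail bounds of Theorems \ref{theoremforquadraticformunderpolynomial} and \ref{theoremforquadraticformunderexponential} at the competing thresholds (including the side constraint $x\ge \sqrt w M\Psi_{q,\alpha}^2$); that part is correct. The genuine gap is in your treatment of the bias $|\Sigma_M-\Sigma|_\infty$. The entrywise bound $|\gamma_{jk}(l)|\le \Psi_{2,0}\Psi_{2,\alpha}(l+1)^{-\alpha}$ obtained from the projection identity is too weak in essentially every regime, not just for $1<\alpha<2$: for $0<\alpha\le 1$ it does not even make $\sum_{|l|>M}|\gamma_{jk}(l)|$ finite, and the triangular piece $M^{-1}\sum_{l\le M}l\,|\gamma_{jk}(l)|$ computed from it is of order $M^{1-\alpha}$ (with a logarithm at $\alpha=1$), not the claimed $M^{-\alpha}$; for $1<\alpha<2$ it gives $M^{1-\alpha}\gg 1/M$. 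Moreover, the proposed repair via Abel/Kronecker summation ``exploiting the absolute summability $\sum_k|\gamma_{jk}(k)|<\infty$'' cannot work: absolute summability is qualitative and yields only $o(1)$ with no rate; to reach $O(1/M)$ along that route you would need $\sum_l l\,|\gamma_{jk}(l)|<\infty$ with a bound uniform over $j,k\le p$, which is precisely what is at stake.

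The missing quantitative ingredient --- and the one the paper invokes, since its proof of the corollary points to the proof of Lemma \ref{lemmafortwogaussian} --- is a bound on \emph{tail sums} of autocovariances rather than on individual entries. From $|\gamma_{jk}(l)|\le\sum_{h\ge 0}\delta_{h,2,j}\,\delta_{h+l,2,k}$, summing over the lag first gives $\sum_{l>M}|\gamma_{jk}(l)|\le \Delta_{0,2,j}\Delta_{M+1,2,k}\le \Psi_{2,0}\Psi_{2,\alpha}M^{-\alpha}$ and $M^{-1}\sum_{l=1}^M l\,|\gamma_{jk}(l)|\le M^{-1}\Delta_{0,2,j}\sum_{l=1}^M\Delta_{l,2,k}\le \Psi_{2,0}\Psi_{2,\alpha}M^{-1}\sum_{l=1}^M l^{-\alpha}$, which is $\lesssim \Psi_{2,0}\Psi_{2,\alpha}v(M)$ in all three regimes, uniformly in $j,k\le p$. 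With this tail-sum inequality in place of your entrywise bound and Abel-summation step, your decomposition goes through and both parts (i) and (ii) follow as you outlined.
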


The above Corollary easily follows from Theorems \ref{theoremforquadraticformunderpolynomial} and \ref{theoremforquadraticformunderexponential} since the bias $|\Sigma_M-\Sigma|_{\infty} \lesssim \Psi_{2,0}\Psi_{2, \alpha} v(M)$; see the proof of Lemma \ref{lemmafortwogaussian}.

For the estimate $\tilde{\Sigma}$ in (\ref{long run covariance estimate minus mean}), let $\tilde{D}_0=[\diag(\tilde{\Sigma})]^{1/2}$. Let $\tilde{Z} = \tilde{\Sigma}^{1/2} \eta$, where $\eta \sim N(0, \text{Id}_p)$ is independent of $(X_i)_i$. Then conditioning on $(X_i)_i$, $\tilde{Z} \sim N(0, \tilde{\Sigma})$. Let $0 < \theta < 1$; let $\tilde \chi_\theta$ be the conditional $\theta$-quantile of $|\tilde{D}_0^{-1}\tilde{Z}|_\infty$ given $(X_i)_{i=1}^n$. We can use $\tilde \chi_\theta$ to estimate the $\theta$-quantile of $|D_0^{-1}(\bar{X}_n-\mu)|_\infty$, thus constructing simultaneous confidence intervals for $\mu = (\mu_1, \ldots, \mu_p)^\top$ as $\hat \mu_j \pm \tilde \chi_\theta \tilde \sigma_{j j}^{1/2}$, $1 \le j \le p$. Assume that $r_n = o(1/\log^2 p)$. Then $\pi(|\tilde{\Sigma}-\Sigma|_\infty)=o(1)$, and by Lemma 3.1 in \citet{chernozhukov2013}, the latter simultaneous confidence intervals have the asymptotically correct coverage probability $\theta$. Note that $\tilde \chi_\theta$ can be obtained by sample quantile estimates from extensive simulations of $\tilde{Z} = \tilde{\Sigma}^{1/2} \eta$.

\section{Tail probability inequalities under dependence}
\label{sec:probability inequalities}
Tail probability inequalities play an important role in simultaneous inference. Here we shall provide some Nagaev-type tail probability inequalities. They are of independent interest. Let $\eps_i, \eps'_j, i, j, \in \mathbb{Z}$, be i.i.d. random variables. We start with the one-dimensional stationary process $(e_i)_{i=-\infty}^\infty$ of the form
\begin{equation}
\label{representation}
e_i=g(\ldots, \eps_{i-1}, \eps_i),
\end{equation}
where $g$ is a measurable function such that $e_i$ is well-defined. Recall $\calF_i^j=(\eps_i, \eps_{i+1}, \ldots, \eps_j)$, $\calF^j=(\ldots, \eps_{j-1}, \eps_j)$ and $\calF_{i}=(\eps_i, \eps_{i+1}, \ldots)$. Let the projection operators $\mathcal{P}^0 \cdot = \E( \cdot |\calF^{0}) - \E( \cdot | \calF^{-1})$, $\mathcal{P}_0 \cdot = \E( \cdot |\calF_{0}^i) - \E( \cdot |\calF_{1}^i)$. As in (\ref{eq:1507140845}), define respectively the functional and the predictive dependence measures
\begin{eqnarray*}
 \delta_{i, q}=\| e_i-g(\calF^{i, \{0\}}) \|_q, \,\, \theta_{i, q} =\|\mathcal{P}^0 e_i\|_q, \,\, \mbox{ and }
 \theta_{i, q}' =\|\mathcal{P}_0 e_i\|_{q},
\end{eqnarray*}
where $\calF^{i, \{0\}} = ( \ldots, \eps_{-1}, \eps_0', \eps_{1}, \ldots, \eps_i)$. Let $\delta_{i, q}= 0$ if $i < 0$; let $\Delta_{m, q} = \sum_{i=m}^\infty \delta_{i, q}$, $m \ge 0$, be the tail dependence measures, and the dependence adjusted norm
\begin{equation}
\label{dependenceadjustednorm}
\| e_\cdot \|_{q, \alpha}:= \sup_{m \geq 0} (m+1)^{\alpha} \Delta_{m, q}, \text{ for }\alpha \geq 0.
\end{equation}
Here $\delta_{i, q}$ measures the dependence of $e_i$ on $\eps_0$ and $\Delta_{m, q}$ measures the cumulative impact of $\eps_0$ on $(e_i)_{i \geq m}$. The projections $(\mathcal{P}_{-i}\cdot)_{i \in \mathbb{Z}}$ and $(\mathcal{P}^{i}\cdot)_{i \in \mathbb{Z}}$ induces martingale differences with respect to $(\calF_{-i})$ and $(\calF^i)$, respectively. Both predictive dependence measures provide an evaluation to the effect on the prediction of $e_i$ when part of the previous inputs is concealed, and they satisfy $\theta_{i, q} \leq \delta_{i, q}$ and $\theta'_{i, q} \leq \delta_{i, q}$ in view of Jensen's inequality.

\subsection{Inequalities with Finite Polynomial Moments}
For $m\geq 0$, the $m$-dependence approximation of $e_i$ is denoted by $e_{i, m}$ where
\begin{equation*}
e_{i, m}=\E(e_i|\eps_{i-m},\eps_{i-m+1}, \ldots, \eps_i).
\end{equation*}
Let $S_n=\sum_{i=1}^n e_i$, $S_{n, m}=\sum_{i=1}^n e_{i, m}$.
With the dependence adjusted norm (\ref{dependenceadjustednorm}), we are able to provide tail probability inequalities for error bounds when approximating $(e_i)$ by the $m$-dependent process $(e_{i, m})$. In lemmas below the constant $C_{q, \alpha}$ only depends on $q$ and $\alpha$ and its values may change from line to line.

\begin{lem}
\label{m-approximationtheorem1}
Assume $\|e_\cdot\|_{q, \alpha} < \infty$, where $q >2$ and $\alpha>0$. (i) If $\alpha>1/2-1/q$, then
\begin{equation}
\label{alpha>}
\P(|S_n-S_{n, m}| \geq x) \leq \frac{C_{q, \alpha} n m^{q/2-1-\alpha q}\|e_\cdot\|_{q, \alpha}^q}{x^q}+ C_{q,\alpha} \exp\lp - \frac{C_{q, \alpha} x^2 m^{2 \alpha}}{n\|e_\cdot\|_{2, \alpha}^2}\rp
\end{equation}
holds for all $x>0$ and $1 \leq m \leq n$.
(ii) If $0 < \alpha < 1/2-1/q$, we have
\begin{equation}
\label{alpha<}
\P(|S_n-S_{n, m}| \geq x) \leq \frac{C_{q,\alpha} n^{q/2-\alpha q} \|e_\cdot\|_{q, \alpha}^q}{x^q}+ C_{q, \alpha} \exp\lp - \frac{C_{q, \alpha} x^2 m^{2 \alpha}}{n\|e_\cdot\|_{2, \alpha}^2}\rp.
\end{equation}
\end{lem}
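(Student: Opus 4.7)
The plan is to establish the bound via a dyadic-block decomposition of the $m$-dependent approximation, combined with the Fuk--Nagaev inequality at each scale. Choose $L = \lceil \log_2(n/m) \rceil$ and set $\tau_l = 2^l m$ for $0 \le l \le L$. Since $\tau_L \ge n$ we have $S_{n, \tau_L} = S_n$, and therefore
\begin{equation*}
S_n - S_{n,m} = \sum_{l=1}^{L} V_l,\qquad V_l := S_{n, \tau_l} - S_{n, \tau_{l-1}} = \sum_{i=1}^n (e_{i, \tau_l} - e_{i, \tau_{l-1}}).
\end{equation*}
Each $V_l$ is the partial sum of a stationary process that is $\tau_l$-dependent across $i$ (because $e_{i, \tau_l}$ uses only $\eps_{i-\tau_l}, \ldots, \eps_i$), so it is amenable to the classical independent-block technique.

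The core technical step is to control each $V_l$ in $L^q$ and $L^2$. The summands $e_{i,\tau_l} - e_{i,\tau_{l-1}}$ are ``tails'' of the functional-dependence decomposition, so by a Burkholder-type martingale moment inequality (e.g.\ Theorem 3 of \citet{wu2011asymptotic}) together with the definitional bound $\Delta_{\tau_{l-1}+1, q} \le \tau_{l-1}^{-\alpha} \|e_\cdot\|_{q,\alpha}$, I would establish $\|V_l\|_q \le C_q \sqrt{n}\,\tau_{l-1}^{-\alpha}\|e_\cdot\|_{q,\alpha}$ and analogously $\|V_l\|_2 \le C_2 \sqrt{n}\,\tau_{l-1}^{-\alpha}\|e_\cdot\|_{2,\alpha}$. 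Next, split $V_l$ into two interlaced systems of independent sub-block sums, each block of length $\tau_l$; by stationarity and the same Burkholder bound applied to a sum of length $\tau_l$, each sub-block sum has $L^q$-norm at most $C_q\sqrt{\tau_l}\,\tau_{l-1}^{-\alpha}\|e_\cdot\|_{q,\alpha}$. Apply Corollary 1.7 of \citet{nagaev1979large} to each system. With weights $\lambda_l = 3\pi^{-2} l^{-2}$ so that $\sum_l \lambda_l \le 1$, the union bound $\P(|S_n - S_{n,m}| \ge x) \le \sum_l \P(|V_l| \ge \lambda_l x)$ yields summands of the form
\begin{equation*}
\frac{C_{q,\alpha}\, n\, \tau_l^{q/2-1-\alpha q}\,\|e_\cdot\|_{q,\alpha}^q}{(\lambda_l x)^q} + C\exp\!\left(-\frac{C_{q,\alpha}(\lambda_l x)^2\, \tau_{l-1}^{2\alpha}}{n\,\|e_\cdot\|_{2,\alpha}^2}\right).
\end{equation*}

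It remains to sum these bounds over $l$. The exponential contribution is dominated by the $l=1$ term since $\tau_{l-1}^{2\alpha}$ only increases with $l$; polylogarithmic factors from $\lambda_l$ are absorbable into the constant $C_{q,\alpha}$, producing the term $\exp(-C_{q,\alpha} x^2 m^{2\alpha}/(n\|e_\cdot\|_{2,\alpha}^2))$ appearing in both (\ref{alpha>}) and (\ref{alpha<}). The polynomial contribution reduces to the geometric series $\sum_{l=1}^L (2^l m)^{q/2-1-\alpha q}/\lambda_l^q$, whose ratio is $2^{q/2-1-\alpha q}$. When $\alpha > 1/2 - 1/q$ the exponent is negative, the series converges, and the $l=1$ term $m^{q/2-1-\alpha q}$ dominates, yielding the factor $n m^{q/2-1-\alpha q}\|e_\cdot\|_{q,\alpha}^q/x^q$ of (\ref{alpha>}). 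When $0 < \alpha < 1/2 - 1/q$ the exponent is positive, the dominant term is at $l=L$ with $\tau_L \asymp n$, giving $n^{q/2-1-\alpha q}$; multiplication by the outer factor $n$ produces the rate $n^{q/2-\alpha q}\|e_\cdot\|_{q,\alpha}^q/x^q$ of (\ref{alpha<}). The main obstacle is the careful bookkeeping in Step 2: verifying that the independent-block decomposition of $V_l$ yields sub-block sums whose $L^q$ norm truly scales like $\sqrt{\tau_l}\,\tau_{l-1}^{-\alpha}\|e_\cdot\|_{q,\alpha}$, not like $\sqrt{\tau_l}\,\|e_\cdot\|_{q,0}$ — this requires that the Burkholder bound be applied to the ``residual'' process $(e_{i,\tau_l} - e_{i,\tau_{l-1}})$ rather than to $(e_i)$ itself, exploiting the tail-decay in the dependence-adjusted norm. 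Once this scaling is in hand, the dichotomy $\alpha \gtrless 1/2 - 1/q$ is exactly the threshold for convergence versus divergence of the resulting geometric series in $l$.
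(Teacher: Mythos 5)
Your route is the paper's own: the lemma is proved there as the $p=1$ special case of Lemma \ref{lemma for m-approximation, polynomial tail}, whose proof is exactly your dyadic telescoping over scales $\tau_l=2^l m$, the even/odd splitting into independent sub-block sums of length $\tau_l$, Burkholder applied to the residual process $(e_{i,\tau_l}-e_{i,\tau_{l-1}})$ to get the $\sqrt{\tau_l}\,\tau_{l-1}^{-\alpha}\|e_\cdot\|_{q,\alpha}$ scaling, a Nagaev-type bound at each scale with weights $\lambda_l$, and the dichotomy in $c=q/2-1-\alpha q$. Two steps, however, do not go through as written. First, the identity $S_n-S_{n,m}=\sum_{l=1}^L V_l$ is false: $e_{i,\tau_L}=\E(e_i\mid \eps_{i-\tau_L},\ldots,\eps_i)$ is a conditional expectation on a finite window, so $S_{n,\tau_L}\neq S_n$ even when $\tau_L\ge n$, unless the process is finitely dependent. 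You need an extra tail term, $S_n-S_{n,n}=\sum_{j\ge n}(S_{n,j+1}-S_{n,j})$, which the paper controls by Burkholder plus Markov (see (\ref{secondterm})), yielding a contribution $C_q n^{q/2}\Delta_{n+1,q}^q/x^q\le C_q n^{q/2-\alpha q}\|e_\cdot\|_{q,\alpha}^q/x^q$; this is exactly the first term in case (ii), and in case (i) it is absorbed since $n^{q/2-\alpha q}\le n\, m^{q/2-1-\alpha q}$ when $c<0$ and $m\le n$.

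Second, with the single weight choice $\lambda_l=3\pi^{-2}l^{-2}$ the polynomial part in case (ii) is $\sum_{l}\tau_l^{c}\lambda_l^{-q}\asymp \tau_L^{c}\,L^{2q}$: the factor $\lambda_L^{-q}\asymp(\log(n/m))^{2q}$ at the dominant scale $l=L$ grows with $n$ and is not absorbable into $C_{q,\alpha}$, so you obtain $n^{q/2-\alpha q}(\log (n/m))^{2q}$ rather than the claimed rate. The paper's fix is to reverse the weights on the upper half, $\lambda_l\propto (L+1-l)^{-2}$ for $l>L/2$, so that $\lambda_L$ is of constant order and $\sum_l\varpi_l^c\lambda_l^{-q}\le C\varpi_L^c$ for $c>0$ (see (\ref{poly})), while $\min_l\lambda_l^2\varpi_l^{2\alpha}$ remains bounded away from zero so the exponential terms still collapse as in (\ref{exponentialterm}). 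Relatedly, your justification that the exponential contribution is ``dominated by the $l=1$ term'' is not quite right, since $\lambda_l^2\tau_{l-1}^{2\alpha}$ need not be monotone in $l$; what is actually used is the uniform lower bound $\mu=\min_l\lambda_l^2\varpi_l^{2\alpha}>0$ together with the fact that for small $x$ the bound exceeds one and is absorbed by the front constant. With these two repairs your argument coincides with the paper's proof.
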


\begin{proof}[Proof of Lemma \ref{m-approximationtheorem1}]
It is a special case of Lemma \ref{lemma for m-approximation, polynomial tail} for $p=1$.
\end{proof}

\begin{lem}[cf. Theorem 2 of \citet{wu2014}]
\label{theorem for the sum polynomial}
Assume that $\|e_\cdot\|_{q, \alpha} < \infty$, where $q >2$ and $\alpha>0$. (i) If $\alpha>1/2-1/q$, then there exists some constant $C_{q, \alpha}$ depending on $q$ and $\alpha$ only such that,  for $x>0$,
\begin{equation}
\label{eq:1508220935}
\P(|S_n| \geq x) \leq \frac{C_{q, \alpha} n \|e_\cdot\|_{q, \alpha}^q}{x^q}+ C_{q,\alpha} \exp\lp - \frac{C_{q, \alpha} x^2}{n\|e_\cdot\|_{2, \alpha}^2}\rp.
\end{equation}
(ii) If $0 < \alpha < 1/2-1/q$, we have the following inequality,
\begin{equation}
\label{eq:1508220936}
\P(|S_n| \geq x) \leq \frac{C_{q,\alpha} n^{q/2-\alpha q} \|e_\cdot\|_{q, \alpha}^q}{x^q}+ C_{q, \alpha} \exp\lp - \frac{C_{q, \alpha} x^2}{n\|e_\cdot\|_{2, \alpha}^2}\rp.
\end{equation}
\end{lem}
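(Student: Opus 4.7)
The plan is to apply the $m$-dependent approximation of Lemma \ref{m-approximationtheorem1} with the specific choice $m = 1$, which turns out to already produce the stated bounds in both regimes. Decompose
\begin{equation*}
\P(|S_n| \geq x) \leq \P(|S_n - S_{n,1}| \geq x/2) + \P(|S_{n,1}| \geq x/2),
\end{equation*}
and handle the two terms separately.

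For the approximation error $S_n - S_{n,1}$, I would invoke Lemma \ref{m-approximationtheorem1} at $m = 1$. In case (i) the exponent $q/2 - 1 - \alpha q$ is negative, so $m^{q/2-1-\alpha q}|_{m=1} = 1$ and the polynomial contribution is $C_{q,\alpha} n \|e_\cdot\|_{q,\alpha}^q / x^q$. In case (ii) the polynomial term reads directly $C_{q,\alpha} n^{q/2-\alpha q} \|e_\cdot\|_{q,\alpha}^q / x^q$. In both cases the exponential factor $m^{2\alpha}$ becomes $1$, producing the clean tail $C_{q,\alpha}\exp(-C_{q,\alpha} x^2/(n\|e_\cdot\|_{2,\alpha}^2))$.

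For $S_{n,1}$, the crucial observation is that $e_{i,1} = \E(e_i \mid \eps_{i-1},\eps_i)$ depends only on $(\eps_{i-1},\eps_i)$, so the sequences $(e_{2k+1,1})_k$ and $(e_{2k,1})_k$ are each i.i.d.\ by stationarity. Write $S_{n,1} = T_o + T_e$, split the event, and apply the classical independent-case Nagaev inequality (Corollary 1.7 of \citet{nagaev1979large}) to each of $T_o$ and $T_e$. To match the right-hand side of (\ref{eq:1508220935})--(\ref{eq:1508220936}), I would bound the $L^q$ and $L^2$ norms of the summands via Jensen's inequality $\|e_{1,1}\|_r \le \|e_1\|_r$ together with the martingale decomposition $e_1 = \sum_{i=0}^\infty \mathcal{P}^{1-i} e_1$, which yields $\|e_1\|_r \le \Delta_{0,r} \le \|e_\cdot\|_{r,\alpha}$ for $r = q,2$. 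This produces an $n\|e_\cdot\|_{q,\alpha}^q/x^q$ polynomial term and the same $\exp(-C x^2/(n\|e_\cdot\|_{2,\alpha}^2))$ exponential term for each of $T_o, T_e$. Combining with the bound for $S_n - S_{n,1}$ and absorbing constants yields (\ref{eq:1508220935}) in case (i); in case (ii), the approximation polynomial $n^{q/2-\alpha q}/x^q$ dominates the $n/x^q$ from Nagaev, giving (\ref{eq:1508220936}).

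The main obstacle is verifying that $m = 1$ really optimizes both regimes simultaneously and, in particular, that the exponential variance proxy on the right can be taken as $\|e_\cdot\|_{2,\alpha}^2$ rather than the individual variance $\|e_1\|_2^2$; this relies on the embedding $\|e_1\|_2 \le \Delta_{0,2} \le \|e_\cdot\|_{2,\alpha}$ through the projection decomposition. Beyond that step the argument is bookkeeping: checking that the four constituent tail bounds (two from Lemma \ref{m-approximationtheorem1}, two from the classical Nagaev inequality applied to the i.i.d.\ odd and even sub-sums) combine into the single two-term bound claimed, with constants depending only on $q$ and $\alpha$.
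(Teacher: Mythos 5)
Your argument is correct, but it takes a different route from the paper: the paper does not prove this lemma internally at all, it simply invokes Theorem 2 of \citet{wu2014}, whose original proof runs a dyadic chaining over $m$-dependent approximations directly for $S_n$. What you do instead is reduce the statement to machinery already in the paper: you take the $m$-dependence approximation at the extreme choice $m=1$, so that Lemma \ref{m-approximationtheorem1} (whose proof, via Lemma \ref{lemma for m-approximation, polynomial tail}, does not rely on the present lemma, so there is no circularity) already delivers exactly the claimed polynomial and exponential terms for $S_n-S_{n,1}$, and you then control the $1$-dependent remainder $S_{n,1}$ by the odd/even split into genuinely i.i.d.\ blocks plus the classical Fuk--Nagaev inequality, with the summand norms absorbed via $\|e_{i,1}\|_r\le\|e_i\|_r\le\Delta_{0,r}\le\|e_\cdot\|_{r,\alpha}$ for $r=2,q$. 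The bookkeeping checks out: in case (i) both pieces contribute $n\|e_\cdot\|_{q,\alpha}^q/x^q$ plus the same Gaussian-type tail, and in case (ii) the remainder's $n/x^q$ term is dominated by $n^{q/2-\alpha q}/x^q$ precisely because $\alpha<1/2-1/q$ forces $q/2-\alpha q>1$. Two small points worth stating explicitly if you write this up: the claim ``for all $x>0$'' is harmless because for $x$ below the implicit threshold in Lemma \ref{lemma for m-approximation, polynomial tail} (with $p=1$, $\ell=1$) the right-hand side exceeds a fixed constant, so the bound is trivial after enlarging $C_{q,\alpha}$; and the projection decomposition $e_i=\sum_{k\ge0}\mathcal{P}^{i-k}e_i$ you use for $\|e_i\|_r\le\Delta_{0,r}$ needs $\E e_i=0$, which is the standing assumption. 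The upshot of your route is a self-contained proof within the paper's framework (the hard work being hidden in the already-proved approximation lemma), at the cost of not reproducing the sharper external argument; the paper's citation buys brevity but leaves the lemma external.
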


\begin{rmk}
By Markov's inequality and Lemma 1 of \citet{liu2010asymptotics}, one obtains
\begin{equation}
\P(|S_n-S_{n, m}|\geq x) \leq \frac{\|S_n-S_{n,m}\|_q^q}{x^q}
 \le C_q \frac{n^{q/2}m^{-\alpha q}\|e_\cdot\|_{q, \alpha}^q}{x^q}.
\end{equation}
In comparison, the polynomial tail bounds in (\ref{eq:1508220935}) and (\ref{eq:1508220936}) are sharper.
\end{rmk}

\subsection{Inequalities with Finite Exponential Moments}
If $e_i$ satisfies stronger moment condition than the existence of finite $q$-th moment, we can have an exponential inequality. We shall assume $\|e_\cdot\|_{q, \alpha} < \infty$ for all $q >0$ and some $\alpha\geq 0$ and we further assume for some $\nu \geq 0$, the dependence adjusted sub-exponential norm
\begin{equation}
\label{assumption}
\|e_\cdot\|_{\psi_\nu, \alpha}:=\sup\limits_{q \geq 2} q ^{-\nu} \|e_\cdot\|_{q, \alpha} < \infty.
\end{equation}
By this definition, if $e_i$ are i.i.d., $\|e_\cdot\|_{\psi_\nu, \alpha}$ reduces to the sub-Gaussian norm ($\nu=1$) or sub-exponential norm ($\nu=1/2$) of the random variable by the equivalence of $\|e_\cdot\|_{q, \alpha}$ and $\|e_i\|_q$. The parameter $\nu$ measures how fast $\|e_\cdot\|_{q, \alpha}$ increases with $q$.
\begin{lem}
\label{m-approximationtheorem2}
Assume (\ref{assumption}). Let $J_n=(S_n-S_{n, m})/\sqrt{n}$ and $\beta=2/(1+2\nu)$.
Then
\begin{equation*}
h(t):= \sup_{n \in \mathbb{N}} \E[\exp(tJ_n^{\beta})] \leq 1+C_{\beta}(1-t/t_0)^{-1/2} t/t_0
\end{equation*}
holds for $0 \leq t < t_0$ with $t_0=m^{\alpha\beta}/(e\beta\|e_\cdot\|_{\psi_\nu, \alpha}^\beta)$. Consequently, letting $t=t_0/2$, for $x>0$,
\begin{equation}
\label{conclusion1}
\Prob(|J_n| \geq x) \leq \exp(-t x^\beta)h(t) \leq C_\beta\exp\left(-\frac{x^\beta m^{\alpha \beta}}{2e\beta \|e_\cdot\|^\beta_{\psi_\nu, \alpha}}\right).
\end{equation}
\end{lem}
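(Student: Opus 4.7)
The plan is to expand $\E[\exp(t J_n^\beta)]$ as a Taylor series, bound each moment $\E|J_n|^{\beta h}$ via an $L^q$ estimate for the $m$-approximation error $S_n - S_{n,m}$ that scales only as $q^{1/\beta}$ in $q$, and then sum the resulting series. The tail bound \eqref{conclusion1} will then drop out from Markov's inequality applied at $t = t_0/2$.

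First, I would establish the sharp moment estimate
$$\|J_n\|_q \;=\; \frac{\|S_n - S_{n,m}\|_q}{\sqrt n} \;\leq\; C\, q^{1/\beta}\, m^{-\alpha}\, \|e_\cdot\|_{\psi_\nu,\alpha}, \qquad q \geq 2.$$
This comes in three pieces. The Burkholder/Rosenthal-type decomposition along the filtration $(\calF^j)$ used in the proof of Lemma \ref{m-approximationtheorem1} (ultimately based on Theorem 3 of \citet{wu2011asymptotic}) yields $\|S_n - S_{n,m}\|_q \lesssim \sqrt{n(q-1)}\,\Delta_{m,q}$; the crucial feature is the $\sqrt{q}$ prefactor. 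Next, $\Delta_{m,q} \leq (m+1)^{-\alpha}\|e_\cdot\|_{q,\alpha}$ by the definition of the dependence-adjusted norm, and the sub-exponential assumption gives $\|e_\cdot\|_{q,\alpha} \leq q^\nu \|e_\cdot\|_{\psi_\nu,\alpha}$. Since $1/2 + \nu = 1/\beta$, the three pieces combine into the claimed bound.

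Next I would write
$$\E[\exp(tJ_n^\beta)] \;=\; 1 + \sum_{h \geq 1} \frac{t^h\, \E|J_n|^{\beta h}}{h!}.$$
For $h \geq 2/\beta$, substituting $q = \beta h$ in the moment bound produces $\E|J_n|^{\beta h} \leq [C\, (\beta h)^{1/\beta}\, m^{-\alpha}\|e_\cdot\|_{\psi_\nu,\alpha}]^{\beta h}$, and Stirling gives $(\beta h)^h/h! \leq (\beta e)^h/\sqrt{2\pi h}$. Together this renders the $h$-th term at most $a_h (t/t_0)^h$ with $a_h = O(1/\sqrt h)$ and $t_0 = m^{\alpha \beta}/(e\beta \|e_\cdot\|_{\psi_\nu,\alpha}^\beta)$, once the implicit constants are tracked. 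The finitely many small-$h$ terms (those with $\beta h < 2$) are handled separately using the trivial $L^2$ inequality $\|J_n\|_2 \lesssim m^{-\alpha}\|e_\cdot\|_{2,\alpha}$. Summation is completed by the estimate $\sum_{h \geq 1} x^h/\sqrt h \leq C_\beta\, x(1-x)^{-1/2}$ for $0 \leq x < 1$ (verified by comparison with the generating function $\sum_h \binom{2h}{h}(x/4)^h = (1-x)^{-1/2}$ and Stirling on the central binomial coefficient), yielding the stated bound on $h(t)$.

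Finally, for \eqref{conclusion1}, take $t = t_0/2$, at which $h(t_0/2)$ becomes a constant depending only on $\beta$. Markov's inequality applied to $\exp(t_0 J_n^\beta/2)$ then gives
$$\P(|J_n| \geq x) \;=\; \P(J_n^\beta \geq x^\beta) \;\leq\; \exp\!\lp-\tfrac{t_0 x^\beta}{2}\rp h(t_0/2),$$
which is exactly \eqref{conclusion1}. The main obstacle is the moment inequality in the first step: one needs the bound to pick up precisely the factor $q^{1/2} \cdot q^\nu$, the $\sqrt q$ coming from the martingale/Burkholder decomposition and the $q^\nu$ from sub-exponentiality. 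Any slack in this scaling would shift the sub-exponential exponent away from the optimal value $\beta = 2/(1+2\nu)$ and prevent the series from converging at the correct rate, while a careful match makes the Stirling factor $(\beta e)^h$ combine with the moment bound to give exactly the $t_0$ stated in the lemma.
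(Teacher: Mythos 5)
Your proposal is correct and follows essentially the same route as the paper: a backward-martingale/Burkholder bound giving $\|J_n\|_q \le (q-1)^{1/2}\Delta_{m+1,q} \le q^{1/\beta} m^{-\alpha}\|e_\cdot\|_{\psi_\nu,\alpha}$ (up to the exact constant), a Taylor expansion of $\exp(tJ_n^\beta)$ split at $\beta h = 2$ with Stirling producing the $(\beta e)^h$ factor that defines $t_0$, summation against the $(1-s)^{-1/2}$ series, and Markov at $t = t_0/2$. The only point to be careful about, which you correctly flag, is that the moment inequality must carry the sharp $\sqrt{q-1}$ factor with no extra $q$-dependence so that the stated $t_0$ emerges exactly.
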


\begin{lem}[cf. Theorem 3 of \citet{wu2014}]
\label{theorem for the sum exponential}
Assume (\ref{assumption}) holds for $\alpha=0$. Let $\beta=2/(1+2\nu)$. Then for $x>0$,
\begin{equation}
\Prob(|S_n/\sqrt{n}| \geq x) \leq C_\beta \exp\left(-\frac{x^\beta}{2e\beta \|e_\cdot\|^\beta_{\psi_\nu, 0}}\right).
\end{equation}
\end{lem}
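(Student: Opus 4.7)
The plan is to mirror the exponential-moment argument of Lemma~\ref{m-approximationtheorem2}, replacing the $m$-dependent residual $J_n=(S_n-S_{n,m})/\sqrt n$ by $R:=S_n/\sqrt n$. Because $\alpha=0$, the factor $m^{\alpha\beta}$ that appeared there simply drops out, so the bookkeeping is actually cleaner. The strategy is the standard one: first obtain a sharp $L^q$ moment bound for $R$ with explicit $q$-dependence, then plug it into the Taylor expansion of $\E\exp(t|R|^\beta)$, and finally apply Markov's inequality at an optimal choice of $t$.

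The first step is to invoke the standard Burkholder/martingale-projection moment inequality for stationary causal processes (Theorem~1 of \citet{wu2011asymptotic}, or equivalently the underlying estimate in the proof of Lemma~\ref{m-approximationtheorem2}), which gives for every $q\ge 2$
\begin{equation*}
\|S_n\|_q \le (q-1)^{1/2}\sqrt n\,\|e_\cdot\|_{q,0}.
\end{equation*}
Combined with the defining inequality $\|e_\cdot\|_{q,0}\le q^\nu\|e_\cdot\|_{\psi_\nu,0}$ coming from~(\ref{assumption}) with $\alpha=0$, this yields $\|R\|_q\le q^{1/2+\nu}\|e_\cdot\|_{\psi_\nu,0}$ for $q\ge 2$. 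The crucial algebraic identity $\beta(1/2+\nu)=1$ then translates the bound into
\begin{equation*}
\|R\|_{\beta h}^{\beta h} \le (\beta h)^{h}\|e_\cdot\|_{\psi_\nu,0}^{\beta h}, \qquad \beta h \ge 2.
\end{equation*}

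The second step is the term-by-term estimate of $\E\exp(t|R|^\beta)=1+\sum_{h\ge 1}t^h\E|R|^{\beta h}/h!$. Stirling's lower bound $h!\ge C\sqrt h (h/e)^h$ combined with the moment estimate above gives, for all $h$ with $\beta h\ge 2$,
\begin{equation*}
\frac{t^h\E|R|^{\beta h}}{h!}\le \frac{C(te\beta\|e_\cdot\|_{\psi_\nu,0}^\beta)^h}{\sqrt h}=\frac{C(t/t_0)^h}{\sqrt h}, \qquad t_0:=\frac{1}{e\beta\|e_\cdot\|_{\psi_\nu,0}^\beta}.
\end{equation*}
The finitely many remaining terms with $\beta h<2$ are bounded using Jensen's inequality and $\|R\|_2\lesssim\|e_\cdot\|_{\psi_\nu,0}$, and absorbed into the constant. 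Summing the resulting polylogarithmic series (whose $(1-x)^{-1/2}$ singularity already appears in Lemma~\ref{m-approximationtheorem2}) gives
\begin{equation*}
\E\exp(t|R|^\beta) \le 1+C_\beta\,\frac{t/t_0}{(1-t/t_0)^{1/2}},\qquad 0\le t<t_0.
\end{equation*}

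Finally, setting $t=t_0/2$ and applying Markov's inequality to $|R|^\beta$ yields
\begin{equation*}
\P(|R|\ge x)\le e^{-tx^\beta}\E\exp(t|R|^\beta)\le C_\beta\exp\!\left(-\frac{x^\beta}{2e\beta\|e_\cdot\|_{\psi_\nu,0}^\beta}\right),
\end{equation*}
which is exactly the claim. The only substantive ingredient is the $L^q$ moment bound in step one; a weaker $q$-dependence than $\sqrt q$ in the Burkholder-type inequality would degrade the allowed $\beta$ and spoil the exponent. Once that moment estimate is in hand, the rest is a mechanical replay of the exponential-moment calculation already carried out for $J_n$ in Lemma~\ref{m-approximationtheorem2}, so I do not anticipate any essentially new obstacle.
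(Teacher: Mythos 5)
Your argument is correct: the moment bound $\|S_n\|_q \le (q-1)^{1/2}\sqrt{n}\,\|e_\cdot\|_{q,0}$ (Theorem 3 of \citet{wu2011asymptotic}, which the paper itself invokes elsewhere), the identity $\beta(1/2+\nu)=1$, the term-by-term bound of $\E\exp(t|R|^\beta)$ with the $(1-t/t_0)^{-1/2}$ series, and Markov at $t=t_0/2$ deliver exactly the stated bound. This is essentially the same route the paper takes — it cites Theorem 3 of \citet{wu2014} and the proof there is the direct analogue of the paper's own Lemma \ref{m-approximationtheorem2} with $S_n/\sqrt{n}$ in place of $J_n$ and $\alpha=0$ — so there is nothing further to add.
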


\begin{proof}[Proof of Lemma \ref{m-approximationtheorem2}]
Let $Q_{n, l}=\sum_{i=1}^n\mathcal{P}_{i-l}X_i$, $l \geq 0$. Then $Q_{n, l}$ is a backward martingale. By Burkholder's inequality, we have
\begin{equation*}
\|Q_{n, l}\|_q^2 \leq (q-1) \sum_{i=1}^n \|\mathcal{P}_{i-l}X_i\|_q^2 =(q-1)n (\theta_{l, q}')^2.
\end{equation*}
By $\theta_{l, q}' \leq \delta_{l, q}$, we have $\|J_n\|_q \leq (q-1)^{1/2}\Delta_{m+1, q}$ in view of $\sqrt{n}J_n= \sum_{l=m+1}^\infty Q_{n, l}$. Write the negative binomial expansion $(1-s)^{-1/2}=1+\sum_{k=1}^\infty a_k s^k$ with $a_k=(2k)!/(2^{2k}(k!)^2)$ for $|s|<1$. By Stirling's formula, we have $a_k \sim (k\pi)^{-1/2}$ as $k \rightarrow \infty$. Hence, there exists absolute constants $C_1, C_2>0$ such that for all $k \geq 1$,
\begin{equation}
C_1 (k/e)^ka_k^{-1} \leq k! \leq C_2 (k/e)^k a_k^{-1}.
\end{equation}
Under condition (\ref{assumption}), if $k\beta \geq 2$, then $\|e_\cdot\|_{\beta k, \alpha} \leq \|e_\cdot\|_{\psi_\nu, \alpha}(\beta k)^\nu$ and hence
\begin{eqnarray*}
\frac{t^k\|J_n^\beta\|^k_k}{k!} \leq \frac{t^k(\beta k-1)^{\beta k/2}\Delta_{m+1,\beta k}^{\beta k}}{C_1(k/e)^k a_k^{-1}}
\leq \frac{a_k t^k (\beta k -1)^{\beta k/2}}{C_1 t_0^k(\beta k)^{\beta k/2}} \leq \frac{a_k t^k}{C_1 \sqrt{e} t_0^k}.
\end{eqnarray*}
If $k\beta <2$, then $\|J_n\|_{\beta k} \leq \|J_n\|_2 \leq 2^\nu m^{-\alpha}\|e_\cdot\|_{\psi_\nu, \alpha}$. In $e^y= \sum_{k=0}^\infty y^k/k!$, let $y = t J_n^\beta$, then
\begin{eqnarray*}
h(t) &\leq & 1+ \sum_{1 \leq k <2/\beta}\frac{t^k(2^\nu m^{-\alpha} \|e_\cdot\|_{\psi_\nu, \alpha})^{\beta k}}{k!}+\sum_{k \geq 2/\beta}\frac{a_k t^k}{C_1 \sqrt{e} t_0^k}\\
&\leq & 1+C_\beta \sum_{k=1}^\infty a_k\frac{t^k}{t_0^k} \leq 1+ C_\beta \frac{t/t_0}{(1-t/t_0)^{1/2}},
\end{eqnarray*}
where $C_\beta>0$ only depends on $\beta$. So (\ref{conclusion1}) follows by Markov's inequality.
\end{proof}

\subsection{Inequalities for High-dimensional Time Series with Finite Polynomial Moments }
In this section we shall derive powerful tail probability inequalities for high-dimensional stationary vectors; cf Lemmas \ref{lemma for m-approximation, polynomial tail} and \ref{lemma for the sum, polynomial tail}. The proofs require Theorem 4.1 of \citet{pinelis1994optimum}, a deep Rosenthal-Burkholder type bound on moments of Banach-spaced martingales. Lemma \ref{lemma for high-dim Bulkholder} follows from Theorem 4.1 of \citet{pinelis1994optimum}. Lemma \ref{lemma for independent high-dim Nagaev} is a Fuk-Magaev type inequality for the sum of independent random vectors. For a $p$-dimensional vector $v = (v_1, \ldots, v_p)$ recall the $s$-length $|v|_s = (\sum_{j=1}^p |v_j|^s)^{1/s}$, $s \ge 1$.

\begin{lem}
\label{lemma for high-dim Bulkholder}
Let $D_i$, $1 \leq i \leq n$, be $p$-dimensional martingale difference vectors with respect to the $\sigma$-field $\mathcal{G}_i$. Let $s>1$ and $q\geq 2$. Then
\begin{equation*}
\||D_1+\ldots+D_n|_{s}\|_{q} \le c \left\{ q \| \sup_{i} |D_i|_s \|_q +\sqrt{q(s-1)}\left\|\left[\sum_{i=1}^n \E (|D_i|_s^2|\mathcal{G}_{i-1})\right]^{1/2}\right\|_q \right\},
\end{equation*}
where $c$ is an absolute constant.
\end{lem}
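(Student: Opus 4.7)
The plan is to invoke Theorem 4.1 of \citet{pinelis1994optimum}, the deep Rosenthal–Burkholder inequality for martingales in smooth Banach spaces, applied to the Banach space $\ell_s^p := (\mathbb{R}^p, |\cdot|_s)$. That theorem asserts that for any separable $(2,D)$-smooth Banach space $(B, \|\cdot\|)$, any $B$-valued martingale $(M_k)$ with differences $D_i = M_i - M_{i-1}$ with respect to a filtration $(\mathcal{G}_i)$, and any $q \ge 2$,
\begin{equation*}
\big\| \|M_n\| \big\|_q \le K \bigg\{ q\, \big\|\sup_i \|D_i\|\big\|_q + D\sqrt{q}\, \Big\| \Big(\sum_{i=1}^n \E(\|D_i\|^2 \mid \mathcal{G}_{i-1})\Big)^{1/2} \Big\|_q \bigg\},
\end{equation*}
where $K>0$ is a universal constant. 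Specializing $B = \ell_s^p$ and $\|\cdot\| = |\cdot|_s$ would produce exactly the desired bound once the correct value of $D$ is identified.

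The remaining input is therefore the $(2,D)$-smoothness constant of $\ell_s^p$. A classical computation (going back to Hanner's work on the moduli of smoothness of $L^s$ spaces) shows that for $s \ge 2$ the functional $F(v) = |v|_s^2$ is twice continuously differentiable away from the origin and satisfies the quadratic upper bound
\begin{equation*}
F(v+w) \le F(v) + \langle \nabla F(v), w\rangle + (s-1)\, |w|_s^2
\end{equation*}
for all $v,w \in \mathbb{R}^p$, which is precisely the hypothesis that $\ell_s^p$ is $(2, \sqrt{s-1})$-smooth in the sense of Pinelis. Substituting $D = \sqrt{s-1}$ into the martingale inequality above yields the stated bound with an absolute constant $c$. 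For $1 < s < 2$, an analogous modulus-of-smoothness argument (or, alternatively, an embedding of $\ell_s^p$ into a space of type $2$ with dimension-free smoothness constant of order $\sqrt{s-1}$) produces the same factor $\sqrt{s-1}$ up to absolute constants which can be absorbed into $c$.

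The main (and essentially only) obstacle is verifying the $(2, \sqrt{s-1})$-smoothness of $\ell_s^p$ cleanly over the full range $s > 1$; the substantive probabilistic content is delivered by Pinelis's theorem, which we invoke as a black box. A key feature of the resulting inequality, crucial for the later applications in Lemmas \ref{lemma for m-approximation, polynomial tail} and \ref{lemma for the sum, polynomial tail}, is that the dimension $p$ does not enter the multiplicative constant: the bound depends on $p$ only implicitly through $|D_i|_s$ and the conditional second moments, which is exactly what is needed to handle high-dimensional partial sums.
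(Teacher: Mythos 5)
Your proposal matches the paper's own proof: the paper simply states that this lemma follows from Theorem 4.1 of Pinelis (1994) applied to the $(2,\sqrt{s-1})$-smooth Banach space $(\mathbb{R}^p,|\cdot|_s)$, which is exactly your argument. The one shaky point you flag---the range $1<s<2$, where $(\mathbb{R}^p,|\cdot|_s)$ is not $2$-smooth with a dimension-free constant of order $\sqrt{s-1}$---is not addressed by the paper either and is immaterial for its applications, which take $s=1\vee\log p\ge 2$.
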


\begin{lem}
\label{lemma for independent high-dim Nagaev}
Assume $s>1$. Let $X_1, \ldots, X_n$ be $p$-dimensional independent random vectors with mean zero such that for some $q>2$, $\||X_i|_s\|_q < \infty$, $1 \leq i \leq n$. Let $T_n=\sum_{i=1}^n X_i$ and $\sigma_i=(\|X_{i1}\|_2, \ldots, \|X_{ip}\|_2)^\top$. Then for any $y>0$,
\begin{equation}
\P \left(|T_n|_s\geq 2 \E |T_n|_s+y\right) \leq C_q y^{-q}\sum_{i=1}^n \E |X_i|_s^q+\exp\left(-\frac{y^2}{3\sum_{i=1}^n |\sigma_i|_s^2}\right),
\end{equation}
where $C_q$ is a positive constant only depending on $q$.
\end{lem}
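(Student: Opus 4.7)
My plan is to follow the classical Fuk--Nagaev truncation-plus-Bernstein recipe, lifted to the $\ell_s$-valued setting by dualizing $|\cdot|_s$ and applying a Bousquet-type concentration inequality for suprema of bounded empirical processes.

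\textbf{Truncation and tail piece.} Fix a level $K = c y$ with $c$ a small constant depending only on $q$. Decompose $X_i = \tilde Y_i + \tilde Z_i$, where $\tilde Y_i = Y_i - \E Y_i$, $Y_i = X_i\,\ind{|X_i|_s \le K}$, and $\tilde Z_i = Z_i - \E Z_i$, $Z_i = X_i - Y_i$. Both families are mean-zero and independent; $|\tilde Y_i|_s \le 2K$ a.s.; and $\tilde Z_i$ vanishes on $\{|X_i|_s \le K\}$. By the triangle inequality,
\[
\{|T_n|_s \ge 2\E|T_n|_s + y\} \subseteq \{|\textstyle\sum_i \tilde Y_i|_s \ge 2\E|T_n|_s + y/2\} \cup \{|\textstyle\sum_i \tilde Z_i|_s \ge y/2\}.
\]
The second event lies in $\bigcup_i\{|X_i|_s > K\}$, whose probability is at most $K^{-q}\sum_i\E|X_i|_s^q = C_q y^{-q}\sum_i\E|X_i|_s^q$ by union bound and Markov. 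This produces the polynomial term.

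\textbf{Bounded piece via dualization.} Write $|\sum_i \tilde Y_i|_s = \sup_{|w|_{s'}\le 1}\langle w, \sum_i \tilde Y_i\rangle$ with $1/s+1/s'=1$, so we are looking at a supremum of sums of bounded independent mean-zero scalars. For each $w$ in the dual ball, Cauchy--Schwarz and H\"older yield the per-summand variance bound
\[
\Var(\langle w, X_i\rangle) \le \Bigl(\sum_j |w_j|\,\|X_{ij}\|_2\Bigr)^2 \le |w|_{s'}^2\,|\sigma_i|_s^2 \le |\sigma_i|_s^2,
\]
while $|\langle w, \tilde Y_i\rangle| \le 2K$ uniformly in $w$ and $i$. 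Bousquet's (or Talagrand's) inequality then gives
\[
\P\bigl(|\textstyle\sum_i \tilde Y_i|_s \ge \E|\textstyle\sum_i \tilde Y_i|_s + y/2\bigr) \le \exp\!\Bigl(-\frac{(y/2)^2}{2\sum_i |\sigma_i|_s^2 + (4/3) K y}\Bigr),
\]
and choosing $c$ small enough that $(4/3) K y \le \sum_i|\sigma_i|_s^2$ collapses the denominator to $3\sum_i|\sigma_i|_s^2$. Since $\E|\sum_i \tilde Y_i|_s \le \E|T_n|_s + 2K^{1-q}\sum_i\E|X_i|_s^q \le 2\E|T_n|_s$ in the regime where the exponential bound is informative (the complementary regime is trivially absorbed by the polynomial term with an enlarged constant), the threshold $2\E|T_n|_s + y/2$ dominates $\E|\sum_i \tilde Y_i|_s + y/2$, and the displayed Bousquet estimate applies as claimed.

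\textbf{Main obstacle.} The delicate step is identifying the variance proxy in the exponent as $\sum_i|\sigma_i|_s^2$ rather than the coarser $\sum_i\E|X_i|_s^2$: the latter is what a direct application of the Banach-space moment bound of Lemma~\ref{lemma for high-dim Bulkholder} combined with Cram\'er--Chernoff would produce, and for $s \ne 2$ it is in general not comparable to $\sum_i|\sigma_i|_s^2$ (Jensen's inequality can push it either way depending on $s$). The dualization to a supremum of one-dimensional Bernstein bounds, combined with Cauchy--Schwarz/H\"older at the per-summand level, is precisely the mechanism that brings the coordinate standard deviations $\|X_{ij}\|_2$ into the variance. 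Tuning the truncation constant $c$ so the range contribution $Ky$ is subsumed into the variance, and tracking constants to obtain the clean $1/3$ in the exponent, are the remaining routine bookkeeping.
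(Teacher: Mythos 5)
Your dualization step (Minkowski plus H\"older to get $\Var(\langle w,X_i\rangle)\le |w|_{s'}^2|\sigma_i|_s^2$, hence weak variance at most $\sum_i|\sigma_i|_s^2$) is exactly the computation the paper uses; but the paper does not reprove the Fuk--Nagaev inequality itself — it simply invokes Theorem 3.1 of Einmahl and Li (2008) in the Banach space $(\mathbb{R}^p,|\cdot|_s)$ with $\eta=\delta=1$, which already has the form $\P(\|S_n\|\ge(1+\eta)\E\|S_n\|+y)\le C\,y^{-q}\sum_i\E\|X_i\|^q+\exp(-y^2/((2+\delta)\Lambda_n))$ with $\Lambda_n$ the weak variance. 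Your attempt to rederive that inequality by truncation plus Bousquet has two genuine gaps. First, Bousquet's (Talagrand-type) inequality for $Z=\sup_{|w|_{s'}\le1}\sum_i\langle w,\tilde Y_i\rangle$ has variance proxy $\sigma^2+2b\,\E Z$ with $b=2K$, not $\sigma^2$ alone; you silently dropped a term of order $K\,\E|\sum_i\tilde Y_i|_s\asymp y\,\E|T_n|_s$. In the regime $y\lesssim\E|T_n|_s$ this term can dominate $\sum_i|\sigma_i|_s^2$ and degrades the exponent to order $y/\E|T_n|_s$, which is not the claimed $y^2/(3\sum_i|\sigma_i|_s^2)$. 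Eliminating the $b\,\E Z$ contribution is precisely the nontrivial content of the Klein--Rio/Einmahl--Li argument, and it is achieved by exploiting the slack in measuring the deviation from $(1+\eta)\E|T_n|_s$ together with a careful choice of truncation and iteration — you use the factor $2\E|T_n|_s$ only to absorb the mean shift of the truncated sum, not for this purpose.

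Second, the step ``choosing $c$ small enough that $(4/3)Ky\le\sum_i|\sigma_i|_s^2$'' is not a choice of a constant: since $K=cy$, it is the condition $cy^2\lesssim\sum_i|\sigma_i|_s^2$, i.e.\ a restriction on $y$, and the lemma must hold for all $y>0$ — in particular in the large-deviation regime $y^2\gg\sum_i|\sigma_i|_s^2$, which is exactly where it is applied in the paper (e.g.\ $x\ge\sqrt{w}M\Psi_{q,\alpha}^2$). In that regime the Bernstein denominator is dominated by $Ky$, and the classical Fuk--Nagaev treatment handles it by a separate case analysis (Bennett's Poissonian tail, shown to be dominated by the polynomial term after tuning the truncation ratio to $q$), which your argument omits. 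A smaller, fixable point: the inclusion $\{|\sum_i\tilde Z_i|_s\ge y/2\}\subseteq\bigcup_i\{|X_i|_s>K\}$ is false as stated because $\sum_i\tilde Z_i$ contains the deterministic centering $-\sum_i\E Z_i$; one must note that if $K^{1-q}\sum_i\E|X_i|_s^q\gtrsim y$ the claimed bound is trivial (enlarging $C_q$), and otherwise the centering is below $y/4$, before the union bound applies.
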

\begin{proof}[Proof of Lemma \ref{lemma for independent high-dim Nagaev}]
For $s>1$, we apply Theorem 3.1 of \citet{einmahl2008characterization} with the Banach space $(\mathbb{R}^p, |\cdot|_s)$ and $\eta=\delta=1$. The unit ball of the dual of $(\mathbb{R}^p, |\cdot|_s)$ is the set of linear functions $\{u=(u_1, \ldots, u_p)^\top \mapsto \lambda^\top u: \lambda \in \mathbb{R}^p, |\lambda|_a \leq 1\}$ where $1/a+1/s=1$. By Minkowski's and H\"{o}lder's inequalities, we have
\begin{equation*}
\|\lambda^\top X_i\|_2 \leq \sum_{j=1}^p |\lambda_j| \cdot \|X_{ij}\|_2 \leq |\lambda|_a |\sigma_i|_s.
\end{equation*}
Hence, the $\Lambda_n$ therein is bounded by $\sum_{i=1}^n |\sigma_i|_s^2$.
\end{proof}

Let $X_i$ be a mean zero $p$-dimensional stationary process, and $T_n = \sum_{i=1}^n X_i$, $T_{n,m}=\sum_{i=1}^n X_{i,m}$ where $X_{i,m}= \E(X_i| \eps_{i-m}, \ldots, \eps_{i})$. We are interested in bounding the tail probabilities of $\P (|T_n-T_{n,m}|_\infty \geq x)$ and $\P (|T_n|_\infty \geq x)$ for large $x$. Wrtie $\ell = \ell(p) = 1\vee \log p$.

\begin{lem}
\label{lemma for m-approximation, polynomial tail}
Assume $\| |X_\cdot|_\infty \|_{q, \alpha} < \infty$, where $q>2$ and $\alpha \geq 0$. Also assume $\Psi_{2, \alpha} < \infty$. (i) If $\alpha>1/2-1/q$, then for $x \gtrsim [\sqrt{n \ell}\Psi_{2, \alpha}+n^{1/q} \ell \| |X_{\cdot}|_\infty \|_{q, \alpha}]m^{-\alpha}$,
\begin{equation}
\label{alpha>}
\P(|T_n-T_{n, m}|_\infty \geq x) \leq \frac{C_{q, \alpha} n m^{q/2-1-\alpha q} \ell^{q/2}\||X_\cdot|_\infty\|_{q, \alpha}^q}{x^q}+ C_{q,\alpha} \exp\lp - \frac{C_{q, \alpha} x^2 m^{2 \alpha}}{n \Psi_{2, \alpha}^2}\rp
\end{equation}
holds for all $1 \leq m \leq n$.
(ii) If $0 < \alpha < 1/2-1/q$, the inequality is
\begin{equation}
\label{alpha<}
\P(|T_n-T_{n, m}|_\infty \geq x) \leq \frac{C_{q,\alpha} n^{q/2-\alpha q} \ell^{q/2} \| |X_\cdot|_\infty \|_{q, \alpha}^q}{x^q}+ C_{q, \alpha} \exp\lp - \frac{C_{q, \alpha} x^2 m^{2 \alpha}}{n \Psi_{2, \alpha}^2}\rp.
\end{equation}
\end{lem}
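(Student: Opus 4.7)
The proof combines a dyadic decomposition in the approximation scale with a block-independence argument and the high-dimensional Nagaev inequality of Lemma \ref{lemma for independent high-dim Nagaev}. Let $L=\lfloor\log_2(n/m)\rfloor$, $\tau_0=m$, $\tau_l=m2^l$ for $1\le l<L$, and $\tau_L=n$. Decompose
$$T_n-T_{n,m}=(T_n-T_{n,\tau_L})+\sum_{l=1}^L V_{n,l},\qquad V_{n,l}=T_{n,\tau_l}-T_{n,\tau_{l-1}}.$$
The residual $T_n-T_{n,\tau_L}$ is handled by a direct Markov bound on its $\mathcal{L}^q$ moment: combining Lemma \ref{lemma for high-dim Bulkholder} with the projection representation $X_i-X_{i,\tau_L}=\sum_{j<i-\tau_L}\mathcal P^j X_i$ gives $\||T_n-T_{n,\tau_L}|_\infty\|_q\lesssim \sqrt{n\ell}\,n^{-\alpha}\||X_{\cdot}|_\infty\|_{q,\alpha}$, which is negligible against the lower bound on $x$ assumed in the hypothesis.

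For each level $l$, the summand $D_i^{(l)}:=X_{i,\tau_l}-X_{i,\tau_{l-1}}$ depends only on $\eps_{i-\tau_l},\ldots,\eps_i$, hence is $\tau_l$-dependent. Partition $\{1,\ldots,n\}$ into consecutive windows of length $\tau_l$; grouping by parity gives two sequences of independent block sums $B_k^{(l)}=\sum_{i\in I_k}D_i^{(l)}$, each containing $\asymp n/\tau_l$ blocks. Apply Lemma \ref{lemma for independent high-dim Nagaev} to each parity class with $s=\log p$, using that $|v|_s\asymp|v|_\infty$ up to an absolute constant in that regime. The two resulting terms are governed by $\sum_k\E|B_k^{(l)}|_\infty^q$ and by the variance proxy $\sum_k|\sigma_k^{(l)}|_{\log p}^2$. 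Combining Lemma \ref{lemma for high-dim Bulkholder} with the projection decomposition $D_i^{(l)}=\sum_{j=i-\tau_l}^{i-\tau_{l-1}-1}\mathcal P^j X_i$ and with the tail estimates $\Delta_{\tau_{l-1}+1,q,\cdot}\le\tau_{l-1}^{-\alpha}\||X_{\cdot}|_\infty\|_{q,\alpha}$ and $\Delta_{\tau_{l-1}+1,2,\cdot}\le\tau_{l-1}^{-\alpha}\Psi_{2,\alpha}$ produces the two-scale block estimate
$$\|\,|B_k^{(l)}|_\infty\|_q \;\lesssim\; \sqrt{\tau_l\ell}\,\tau_{l-1}^{-\alpha}\Psi_{2,\alpha}+\tau_l^{1/q}\ell\,\tau_{l-1}^{-\alpha}\||X_{\cdot}|_\infty\|_{q,\alpha},$$
and correspondingly $\sum_k|\sigma_k^{(l)}|_{\log p}^2\lesssim n\,\tau_{l-1}^{-2\alpha}\ell\,\Psi_{2,\alpha}^2$. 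The assumed lower bound on $x$ is exactly what is needed to absorb the centering $2\E|V_{n,l}|_\infty$ appearing in the Nagaev inequality.

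Finally, distribute $x$ across levels using the symmetric weights $\lambda_l=3\pi^{-2}l^{-2}$ for $l\le L/2$ and $\lambda_l=3\pi^{-2}(L+1-l)^{-2}$ for $l>L/2$, so that $\sum_l\lambda_l<1$, and sum the level-wise tail bounds via Bonferroni, exactly as in the proof of Theorem \ref{theoremforquadraticformunderpolynomial}. The polynomial contributions aggregate to
$$\sum_{l=1}^L \frac{n\,\tau_l^{q/2-1}\tau_{l-1}^{-\alpha q}\ell^{q/2}\||X_{\cdot}|_\infty\|_{q,\alpha}^q}{(\lambda_l x)^q},$$
whose summability is dictated by the sign of the exponent $q/2-1-\alpha q$: when $\alpha>1/2-1/q$ the exponent is negative and the $l=1$ endpoint dominates, producing the $nm^{q/2-1-\alpha q}$ factor in (\ref{alpha>}); when $\alpha<1/2-1/q$ it is positive and the $l=L$ endpoint dominates, producing the $n^{q/2-\alpha q}$ factor in (\ref{alpha<}). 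The exponential contributions $\exp(-C(\lambda_l x)^2\tau_{l-1}^{2\alpha}/(n\Psi_{2,\alpha}^2))$ collapse via the summation device at (\ref{exp}) to the $l=1$ bound $\exp(-Cx^2m^{2\alpha}/(n\Psi_{2,\alpha}^2))$. The main obstacle is establishing the two-scale block-moment estimate cleanly: matching the Pinelis Burkholder bound of Lemma \ref{lemma for high-dim Bulkholder} against the Fuk-Nagaev two-term tail of Lemma \ref{lemma for independent high-dim Nagaev}, with the correct $\ell=\log p$ weights on both the polynomial and Gaussian pieces, so that the critical exponent $q/2-1-\alpha q$ governs the geometric sum over $l$ and the two regimes of $\alpha$ emerge with the right dominating endpoint.
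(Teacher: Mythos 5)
Your proposal is correct and follows essentially the same route as the paper's own proof: the same dyadic scales $\tau_l=m2^l$ with the telescoping decomposition of $T_n-T_{n,m}$, a Markov/Burkholder bound for the residual $T_n-T_{n,n}$, parity-blocking into independent block sums handled by Lemma \ref{lemma for independent high-dim Nagaev} with $s=\ell$, the lower bound on $x$ used to absorb the centering $2\E|\cdot|_s$, the weights $\lambda_l$, and the sign of $q/2-1-\alpha q$ dictating which endpoint dominates. The only cosmetic difference is bookkeeping: the paper bounds block $q$-th moments by the plain Burkholder estimate $C_q(\tau_l\ell)^{1/2}\tilde\omega_{l,q}$ and reserves the two-scale (Nemirovski-type, Lemma 8 of \citet{chernozhukov2014comparison}) bound for the expectation of the parity-class sums, whereas you state a two-scale bound at the block level, but the resulting aggregation over $l$ is identical.
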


\begin{proof}[Proof of Lemma \ref{lemma for m-approximation, polynomial tail}]
Let $s = \ell = 1\vee \log p$. Then $\P (|T_n-T_{n,m}|_\infty \geq x)$ is equivalent to $\P (|T_n-T_{n,m}|_s \geq x)$, since for any vector $v=(v_1, \ldots, v_p)^\top$, $|v|_\infty \leq |v|_s \leq p^{1/s} |v|_\infty$.
Let $L=\lfloor (\log n - \log m)/ (\log 2)\rfloor$, $\varpi_l=2^l$ if $1 \leq l <L$, $\varpi_L=\lfloor n/m \rfloor$ and $\tau_l=m \cdot \varpi_l$ for $1 \leq l < L$, $\tau_0=m$, $\tau_L=n$. Define $M_{n, l}=T_{n, \tau_l}-T_{n, \tau_{l-1}}$ for $1 \leq l \leq L$ and write
\begin{equation}
\label{sum}
T_n-T_{n, m}= T_n-T_{n, n}+\sum\limits_{l=1}^L M_{n, l}.
\end{equation}
Notice that $T_n-T_{n, n}=\sum\limits_{j=n}^\infty T_{n, j+1}-T_{n, j}$. By Lemma \ref{lemma for high-dim Bulkholder},
\begin{equation*}
 \| |T_n-T_{n, n}|_s \|_q \leq \sum\limits_{j=n}^\infty \| |T_{n, j+1}-T_{n, j}|_s \|_q \leq \sum\limits_{j=n}^\infty C_q (ns)^{1/2}\omega_{j+1, q}=C_q (ns)^{1/2}\Omega_{n+1, q},
\end{equation*}
where $C_q$ is a constant only depending on $q$. By Markov's inequality, we have
\begin{equation}
\label{secondterm}
\Prob(|T_n-T_{n, n}|_s \geq x) \leq \frac{\| |T_n-T_{n, n}|_s \|_q ^q}{x^q} \leq \frac{C_q (ns)^{q/2}\Omega_{n+1, q}^q}{x^q}.
\end{equation}
For each $1 \leq l \leq L$, define
\begin{eqnarray*}
&&Y_{i,l}=\sum_{k=(i-1)\tau_l+1}^{(i\tau_l)\wedge n}\lp X_{k,\tau_l}-X_{k,\tau_{l-1}}\rp, \quad \text{for } 1 \leq i \leq \lfloor n/\tau_l\rfloor;\\
&&R_{n, l}^e= \sum_{i \text{ is even}} Y_{i,l} \text{ and } R_{n, l}^o= \sum_{i \text{ is odd}} Y_{i,l}.
\end{eqnarray*}
Let $c=q/2-1-\alpha q$; let $\lambda_1, \lambda_2, \cdots, \lambda_L$ be a positive sequence such that $\sum_{l=1}^L \lambda_l \leq 1$, specifically, $\lambda_l=l^{-2}/(\pi^2/3)$ if $1 \leq l \leq L/2$ and $\lambda_l=(L+1-l)^{-2}/(\pi^2/3)$ if $L/2 < l \leq L$.
Since $Y_{i,l}$ and $Y_{i',l}$ are independent for $|i-i'|>1$, by Lemma \ref{lemma for independent high-dim Nagaev}, for any $x>0$,
\begin{eqnarray*}
\P( |R_{n,l}^e|_s-2 \E | R_{n,l}^e|_s\geq \lambda_lx) \leq  \frac{C_q \sum\limits_{i \text{ is even}}\E| Y_{i,l}|_s^{q}}{\lp \lambda_l x \rp^{q}}+ \exp\lp -\frac{\lp \lambda_l x \rp^2}{3 \sum\limits_{i \text{ is even}} |\sigma_{Y_i, l}|_s^2} \rp,
\end{eqnarray*}
where $\sigma_{Y_i, l}=(\|Y_{i1, l}\|_2, \ldots, \|Y_{ip,l}\|_2)^\top$.
By Lemma \ref{lemma for high-dim Bulkholder}, $\| |Y_{i, l}|_s \|_{q} \leq C_q (\tau_l s)^{1/2} \tilde{\omega}_{l, q}$ where $\tilde{\omega}_{l, q} =\sum_{k=\tau_{l-1}+1}^{\tau_l} \omega_{k, q} \leq \tau_{l-1}^{-\alpha} \||X_\cdot|_\infty \|_{q, \alpha}$. For $1 \leq j \leq p$, by the Bulkholder inequality, $\|Y_{ij,l}\|_2 \leq \sqrt{\tau_l} \tilde{\delta}_{l,2,j}$ where $\tilde{\delta}_{l,2,j}= \sum_{k=\tau
_{l-1}+1}^{\tau_l} \delta_{k, 2, j} \leq \tau_{l-1}^{-\alpha} \|X_{\cdot j}\|_{2, \alpha}$, which implies $|\sigma_{Y_i, l}|_s \lesssim \tau^{1/2}\tau_{l-1}^{-\alpha} \Psi_{2, \alpha}$. So we obtain
\begin{equation}
\P( | R_{n,l}^e|_s -2\E | R_{n,l}^e|_s \geq \lambda_l x) \leq  \frac{C_1 n s^{q/2}}{x^{q}}\cdot \frac{\tau_{l}^{q/2-1}\tilde{\omega}_{l, q}^{q}}{\lambda_l^{q}}+ \exp\lp -\frac{C_2\lp\lambda_l x \rp^2\tau_{l-1}^{2\alpha}}{n \Psi_{2, \alpha}^2} \rp. \label{R}
\end{equation}
By Lemma 8 of \citet{chernozhukov2014comparison}, for $s = \log p \vee 1$,
\begin{equation}
\E | R_{n,l}^e|_s \lesssim \sqrt{ns} \tau_{l-1}^{-\alpha} \Psi_{2, \alpha} + n^{1/q}s \tilde{\omega}_{l,q} \lesssim  [\sqrt{ns}\Psi_{2, \alpha}+n^{1/q} s \| |X_{\cdot}|_\infty\|_{q, \alpha}]m^{-\alpha} \varpi_l^{-\alpha}.
\end{equation}
Notice that $\min_{l \geq 0} \lambda_l \varpi_l^{\alpha} >0$. Hence, $\E | R_{n,l}^e|_s \lesssim \lambda_l x$ and (\ref{R}) implies
\begin{equation*}
\P( | R_{n,l}^e|_s \geq \lambda_l x) \leq  \frac{C_1 n s^{q/2}}{x^{q}}\cdot \frac{\tau_{l}^{q/2-1}\tilde{\omega}_{l, q}^{q}}{\lambda_l^{q}}+ \exp\lp -\frac{C_2\lp\lambda_l x \rp^2\tau_{l-1}^{2\alpha}}{n \Psi_{2, \alpha}^2} \rp.
\end{equation*}

A similar inequality holds for $R_{n, l}^o$. Therefore,
\begin{eqnarray}
\Prob (|\sum\limits_{l=1}^L M_{n, l}|_s  \geq x ) \nonumber
&\leq& \sum_{l=1}^L \P\lp | M_{n,l}|_s \geq \lambda_l x \rp\\ \nonumber
&\leq& \sum_{l=1}^L \P\lp \lal  R_{n,l}^e \ral_s \geq \lambda_l x/2 \rp+\sum_{l=1}^L \P \lp \lal R_{n,l}^o \ral_s \geq \lambda_l x/2 \rp\\ \nonumber
&\leq& \sum_{l=1}^L \frac{C_1 n s^{q/2}}{x^{q}}\cdot \frac{\tau_{l}^{q/2-1}\tilde{\omega}_{l, q}^{q}}{\lambda_l^{q}}+ 2\sum_{l=1}^L\exp\lp -\frac{C_2\lp\lambda_l x \rp^2\tau_{l-1}^{2\alpha}}{n \Psi_{2, \alpha}^2} \rp \\ \label{thirdterm}
&\leq & \frac{C_3 n m^{c}s^{q/2}\| |X_\cdot|_\infty \|_{q, \alpha}^q}{x^q} \sum_{l=1}^L \frac{\varpi_l^c}{\lambda_l^q}+C_4 \sum_{l=1}^L \exp\left(-\frac{C_5 x^2 m^{2 \alpha} \lambda_l^2 \varpi_l^{2\alpha}}{n \Psi_{2, \alpha}^2}\right).
\end{eqnarray}
By the definition of $\varpi_l$ and $\lambda_l$ and by some elementary calculation, there exists some constant $C_6>1$ such that for all $t \geq 1$,
\begin{equation}
\label{exponentialterm}
\sum_{l=1}^L \exp(-C_5 t \lambda_l^2 \varpi_l^{2\alpha}) \leq C_6 \exp(-C_5 t \mu),
\end{equation}
where $\mu=\min_{l \geq 1} \lambda_l^2 \varpi_l^{2 \alpha} >0$. If $c>0$, it can be obtained that $\sum_{l=1}^L \varpi_l^c/ \lambda_l^q \leq C_7 \varpi_L^c \leq C_7 n^c/m^c$. If $c <0$, then $\sum_{l=1}^L \varpi_l^c/ \lambda_l^q \leq C_8$. Hence, combining (\ref{sum}), (\ref{secondterm}), (\ref{thirdterm}), (\ref{exponentialterm}), Lemma   \ref{lemma for m-approximation, polynomial tail} follows.
\end{proof}

\begin{lem}
\label{lemma for the sum, polynomial tail}
Assume $\| |X_\cdot|_\infty \|_{q, \alpha} < \infty$, where $q>2$ and $\alpha \geq 0$. Also assume $\Psi_{2, \alpha} < \infty$. (i) If $\alpha>1/2-1/q$, then for $x \gtrsim \sqrt{n \ell}\Psi_{2, \alpha}+n^{1/q} \ell \| |X_{\cdot}|_\infty \|_{q, \alpha}$,
\begin{equation}
\label{alpha> for the sum}
\P(|T_n|_\infty \geq x) \leq \frac{C_{q, \alpha} n \ell^{q/2} \| |X_\cdot|_\infty \|_{q, \alpha}^q}{x^q}+ C_{q,\alpha} \exp\lp - \frac{C_{q, \alpha} x^2}{n \Psi_{2, \alpha}^2}\rp.
\end{equation}
(ii) If $0 < \alpha < 1/2-1/q$, we have the following inequality,
\begin{equation}
\label{alpha< for the sum}
\P(|T_n|_\infty \geq x) \leq \frac{C_{q,\alpha} n^{q/2-\alpha q} \ell^{q/2} \| |X_\cdot|_\infty \|_{q, \alpha}^q}{x^q}+ C_{q, \alpha} \exp\lp - \frac{C_{q, \alpha} x^2}{n \Psi_{2, \alpha}^2}\rp.
\end{equation}
\end{lem}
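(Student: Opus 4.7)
The plan is to reduce Lemma \ref{lemma for the sum, polynomial tail} to Lemma \ref{lemma for m-approximation, polynomial tail} combined with an independent–block Fuk–Nagaev bound on an $m$-dependent sum. Concretely, I would write
$$
T_n = T_{n,m} + (T_n - T_{n,m})
$$
for a value of $m$ chosen below, and bound the two pieces on disjoint events $\{|T_n-T_{n,m}|_\infty \geq x/2\}$ and $\{|T_{n,m}|_\infty \geq x/2\}$ separately. The first piece is handled verbatim by Lemma \ref{lemma for m-approximation, polynomial tail}, which already produces the target form: a polynomial tail with exponent $q/2-1-\alpha q$ in $m$ and an exponential tail with rate $x^2 m^{2\alpha}/(n\Psi_{2,\alpha}^2)$.

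For $T_{n,m}$, observe that $X_{i,m}=\E(X_i\mid\eps_{i-m},\ldots,\eps_i)$ is a measurable function of only $m+1$ consecutive innovations, so $(X_{i,m})_i$ is $(m+1)$-dependent. Partition $\{1,\ldots,n\}$ into the $m+1$ residue classes mod $m+1$; within each class the summands are independent and mean zero. Apply Lemma \ref{lemma for independent high-dim Nagaev} to each class with the Banach-space norm $|\cdot|_s$, $s=\ell=1\vee\log p$, using $|v|_\infty\le|v|_s\le e|v|_\infty$ to pass back to the sup norm. The quantities appearing in Lemma \ref{lemma for independent high-dim Nagaev} are controlled by $\E|X_{i,m}|_s^q\lesssim\ell^{q/2}\||X_\cdot|_\infty\|_{q,0}^q$ (via Jensen and the $|\cdot|_s$/$|\cdot|_\infty$ comparison), and $\sum_i|\sigma_i|_s^2\lesssim n\Psi_{2,0}^2$; the requisite bound on $\E|T_{n,m}|_s$ needed to absorb the $2\E|T_n|_s$ offset comes from the Chernozhukov–Chetverikov–Kato maximal inequality used in the proof of Lemma \ref{lemma for m-approximation, polynomial tail}. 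This produces a tail bound of the form $C(n/m^{0})\ell^{q/2}\||X_\cdot|_\infty\|_{q,0}^q/x^q + \exp(-C x^2/(n\Psi_{2,0}^2))$ after summing over the $m+1$ residue classes.

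The choice of $m$ is where the two cases diverge. In case (i) with $\alpha>1/2-1/q$ the exponent $q/2-1-\alpha q$ in the approximation-error term is negative, so taking $m$ to be a large absolute constant (e.g.\ $m=1$ up to a tuning constant, or any $m$ independent of $n$) kills the $m^{q/2-1-\alpha q}$ factor, yielding the stated polynomial rate $n\ell^{q/2}\||X_\cdot|_\infty\|_{q,\alpha}^q/x^q$; the exponential rate $\exp(-Cx^2/(n\Psi_{2,\alpha}^2))$ then follows from the approximation piece. In case (ii) with $0<\alpha<1/2-1/q$, Lemma \ref{lemma for m-approximation, polynomial tail} already delivers the heavier factor $n^{q/2-\alpha q}\ell^{q/2}$; I would then take $m$ comparable to $n$ so that the $m$-dependent block sum produces a strictly smaller polynomial contribution, and verify that the exponential term from the block Nagaev bound is dominated by the one from Lemma \ref{lemma for m-approximation, polynomial tail}. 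The required lower-bound condition $x\gtrsim\sqrt{n\ell}\,\Psi_{2,\alpha}+n^{1/q}\ell\||X_\cdot|_\infty\|_{q,\alpha}$ is what makes the $2\E|T_{n,m}|_s$ offset in Lemma \ref{lemma for independent high-dim Nagaev} absorbable into $x/2$.

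The main obstacle will be arranging the Gaussian-tail constant to read $n\Psi_{2,\alpha}^2$ (with the correct $\alpha$) rather than the weaker $n\Psi_{2,0}^2$ that the naive block variance bound produces. I expect this to be resolved by the fact that the exponential bound coming from Lemma \ref{lemma for m-approximation, polynomial tail} already has the sharper $\Psi_{2,\alpha}^2$ rate and dominates, provided $m$ is chosen so that $m^{2\alpha}$ neither is too small (which would weaken the exponential from the approximation piece) nor forces the block Nagaev exponential to a weaker rate; a secondary routine step is to confirm that $n\Psi_{2,0}^2\lesssim n\Psi_{2,\alpha}^2$ for the relevant range of $x$, or to refine the block variance control via the projection-operator decomposition used in (\ref{eq:1508182136}).
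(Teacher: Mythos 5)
Your overall reduction --- write $T_n = T_{n,m} + (T_n - T_{n,m})$, control the difference by Lemma \ref{lemma for m-approximation, polynomial tail}, and control the $(m+1)$-dependent sum $T_{n,m}$ by splitting into independent pieces and applying Lemma \ref{lemma for independent high-dim Nagaev} with $s=\ell$, absorbing the $2\E|\cdot|_s$ offset through the assumed lower bound on $x$ --- is exactly the intended argument (the paper omits the proof as ``similar to'' the $m$-approximation lemma, whose proof is this same blocking scheme), and your case (i) with $m$ a fixed constant goes through. Your worry about ending up with $n\Psi_{2,0}^2$ instead of $n\Psi_{2,\alpha}^2$ in the exponential is vacuous: from the definition $\|X_{\cdot j}\|_{2,\alpha}=\sup_{m\ge0}(m+1)^{\alpha}\Delta_{m,2,j}$ one has $\|X_{\cdot j}\|_{2,0}\le\|X_{\cdot j}\|_{2,\alpha}$, hence $\Psi_{2,0}\le\Psi_{2,\alpha}$, and the block-variance exponential is automatically of the stated form.

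The genuine gap is your case (ii), where you propose taking $m\asymp n$. With that choice the residue-class decomposition of $T_{n,m}$ into independent summands degenerates: there are $m+1\asymp n$ classes of $O(1)$ terms each, and splitting the level $x/2$ over $\asymp n$ classes inflates the polynomial contribution to order $n^{q+1}\||X_\cdot|_\infty\|_{q,0}^q/x^q$ and the exponential contribution to $n\exp(-Cx^2/(n^2\Psi_{2,0}^2))$, both far weaker than the target $n^{q/2-\alpha q}\ell^{q/2}\||X_\cdot|_\infty\|_{q,\alpha}^q/x^q+\exp(-Cx^2/(n\Psi_{2,\alpha}^2))$; alternatively, bounding $\P(|T_{n,n}|_\infty\ge x/2)$ by Markov and the Banach-space Burkholder moment bound only yields $(n\ell)^{q/2}\||X_\cdot|_\infty\|_{q,0}^q/x^q$, which exceeds the target by a factor $n^{\alpha q}$. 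Enlarging $m$ makes the $m$-dependent piece harder, not easier. The fix is to keep $m=O(1)$ in case (ii) as well: Lemma \ref{lemma for m-approximation, polynomial tail}(ii) then already supplies the $n^{q/2-\alpha q}\ell^{q/2}$ polynomial factor together with $\exp(-C x^2 m^{2\alpha}/(n\Psi_{2,\alpha}^2))=\exp(-C' x^2/(n\Psi_{2,\alpha}^2))$, while the independent-block bound for $T_{n,m}$ contributes only $n\ell^{q/2}\||X_\cdot|_\infty\|_{q,0}^q/x^q+\exp(-Cx^2/(n\Psi_{2,0}^2))$, which is dominated by the stated right-hand side since $q/2-\alpha q>1$ when $0<\alpha<1/2-1/q$. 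With that single change your argument is complete and coincides with the paper's (omitted) proof.
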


\begin{proof}[Proof of Lemma \ref{lemma for the sum, polynomial tail}]
The proof is similar to that of Lemma \ref{lemma for m-approximation, polynomial tail}, and thus is omitted.
\end{proof}

\section{Proofs}
\label{sec:proof}

\subsection{Proof of Theorems \ref{th:1507140259} and \ref{th:1507140325}}
\label{sec:1507151006}

We shall apply the $m$-dependence approximation approach. For $m \ge 0$, define
\begin{equation}
X_{i, m}=(X_{i1,m}, \ldots, X_{ip, m})^\top=\E(X_i|\eps_{i-m},\eps_{i-m+1}, \ldots, \eps_i).
\end{equation}
Write $T_{X}=\sum_{i=1}^n X_i$ and $T_{X, m}=\sum_{i=1}^n X_{i, m}$.
For simplicity, suppose $n=(M+m)w$, where $M \gg m$ and $M, m, w \rightarrow \infty$ (to be determined) as $n \to \infty$. We apply the block technique and split the interval $[1, n]$ into alternating large blocks $L_b= [ (b-1)(M+m)+1,  bM+(b-1)m ]$ and small blocks $S_b =[ bM+(b-1)m +1, b(M+m) ]$, $1 \leq b \leq w$. Let
\begin{eqnarray*}
Y_b = \sum_{i \in L_b}X_i, \,\, Y_{b, m}=\sum_{i \in L_b} X_{i, m}, \,\,
 T_{Y}=\sum_{b=1}^w Y_{b}, \,\, T_{Y, m}=\sum_{b=1}^w Y_{b, m}.
\end{eqnarray*}
Let $Z_{b}$, $1 \leq b \leq w$, be i.i.d. $N(0, M B)$ and $Z_{b, m}$ be i.i.d. $N(0, M \tilde B)$, where the covariance matrices $B$ and $\tilde B$ are respectively given by
\begin{eqnarray}
\label{eq:1507172300}
B = (b_{ij})_{i,j=1}^p = \mbox{Cov}(Y_b/\sqrt{M}) \mbox{ and }
\tilde{B}=(\tilde{b}_{ij})_{i,j=1}^p = \mbox{Cov}(Y_{b, m}/\sqrt{M}).
\end{eqnarray}
Write $T_{Z, m}=\sum_{b=1}^w Z_{b, m}$ and let $Z \sim N(0,\Sigma)$.

\begin{lem}
\label{lemmafordifference}
(i) Assume $\Theta_{q, \alpha}< \infty$ for some $q>2$ and $\alpha>0$. Then there exists some constant $C_{q, \alpha}$ such that for $y > 0$
\begin{equation}\label{eq:1508122226}
\P(|T_X-T_{Y, m}|_\infty \geq y) \lesssim f^*_1(y)+f^*_2(y)=: f^*(y)
\end{equation}
where the constant in $ \lesssim$ only depends on $q$ and $\alpha$,
\begin{equation}
f^*_1(y)=\left\{
\begin{array}{ll}
  y^{-q}n m^{q/2-1-\alpha q}\Theta_{q, \alpha}^q+ p\exp\lp - \frac{C_{q, \alpha}y^2 m^{2 \alpha}}{n\Psi_{2, \alpha}^2}\rp, & \alpha>1/2-1/q \\
  y^{-q}n^{q/2-\alpha q} \Theta_{q, \alpha}^q+  p \exp\lp - \frac{C_{q, \alpha}y^2 m^{2 \alpha}}{n\Psi_{2, \alpha}^2}\rp, & \alpha<1/2-1/q
\end{array}
\right.
\end{equation}
and
\begin{equation}
f^*_2(y)=\left\{
\begin{array}{ll}
  y^{-q}wm \Theta_{q, \alpha}^q+  p\exp\lp - \frac{C_{q, \alpha}y^2 }{mw\Psi_{2, \alpha}^2}\rp, & \alpha>1/2-1/q \\
  y^{-q}(wm)^{q/2-\alpha q} \Theta_{q, \alpha}^q+ p\exp\lp - \frac{C_{q, \alpha} y^2 }{wm\Psi_{2, \alpha}^2}\rp, & \alpha<1/2-1/q
\end{array}
\right. .
\end{equation}
(ii) Assume $\Phi_{\psi_\nu, \alpha}<\infty$ for some $\nu\geq 0$ and $\alpha>0$. Let $\beta=2/(1+2\nu)$. Then there exists a constant $C_\beta >0$ such that for $y>0$,
\begin{equation}\label{eq:1508122228}
\P(|T_X-T_{Y, m}|_\infty \geq y) \lesssim f^\diamond_1(y)+ f^\diamond_2(y)=: f^\diamond(y),
\end{equation}
where the constant in $ \lesssim$ only depends on $\beta$ and $\alpha$,
\begin{equation*}
f^\diamond_1(y)=   p\exp\left\{-C_\beta\left(\frac{ y m^{\alpha}}{\sqrt{n}\Phi_{\psi_\nu, \alpha}}\right)^\beta\right\} \,\,
 \text{and} \,\, f^\diamond_2(y)= p\exp\left\{-C_\beta\left(\frac{ y }{\sqrt{mw} \Phi_{\psi_\nu, 0}}\right)^\beta\right\}.
\end{equation*}
\end{lem}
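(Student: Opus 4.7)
My plan is to decompose
$$T_X - T_{Y,m} = (T_X - T_{X,m}) + (T_{X,m} - T_{Y,m}),\quad T_{X,m}:=\sum_{i=1}^n X_{i,m},$$
and control each piece by the tail inequalities of Section \ref{sec:probability inequalities}. A union bound gives
$$\P(|T_X - T_{Y,m}|_\infty \ge y) \le \P(|T_X - T_{X,m}|_\infty \ge y/2) + \P(|T_{X,m} - T_{Y,m}|_\infty \ge y/2),$$
and I will bound the first summand by $f_1^*(y)$ (resp.\ $f_1^\diamond(y)$) and the second by $f_2^*(y)$ (resp.\ $f_2^\diamond(y)$).

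\textbf{First piece.} The quantity $|T_X - T_{X,m}|_\infty$ is precisely the $m$-dependence approximation error for the full $p$-dimensional sum, treated in Lemma \ref{lemma for m-approximation, polynomial tail}; applying that lemma yields a bound in terms of $\||X_\cdot|_\infty\|_{q,\alpha}$ (the $\ell^{q/2}$ factor being absorbed into $(\log p)^q$). Separately, I will apply the one-dimensional Lemma \ref{m-approximationtheorem1} coordinatewise and sum with Bonferroni to obtain a second bound in terms of $\Upsilon_{q,\alpha}^q = \sum_j\|X_{\cdot j}\|_{q,\alpha}^q$, with a $p$-prefactor on the exponential term. Taking the minimum of these two bounds and using $\Theta_{q,\alpha}=\Upsilon_{q,\alpha}\wedge(\||X_\cdot|_\infty\|_{q,\alpha}\log p)$ delivers $f_1^*(y)$ in both the $\alpha>1/2-1/q$ and $\alpha<1/2-1/q$ regimes.

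\textbf{Second piece.} Write $T_{X,m} - T_{Y,m} = \sum_{b=1}^w Z_b$ with $Z_b:=\sum_{i \in S_b} X_{i,m}$. The structural key is that consecutive small blocks $S_b,S_{b+1}$ are separated by a large block of length $M \ge m$ and $X_{i,m}$ is $\sigma(\eps_{i-m},\ldots,\eps_i)$-measurable; hence the $\eps$-indices governing distinct $Z_b$'s are disjoint and $Z_1,\ldots,Z_w$ are mutually \emph{independent}. I will then bound $\P(|\sum_b Z_b|_\infty \ge y/2)$ by viewing the small-block data, after re-indexing, as a stationary $m$-dependent process of effective length $wm$ and invoking Lemma \ref{lemma for the sum, polynomial tail}; equivalently, one can combine Lemma \ref{lemma for independent high-dim Nagaev} with the Burkholder-type moment estimate of Lemma \ref{lemma for high-dim Bulkholder} applied to each $Z_b$. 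Running the same two routes ($\||X_\cdot|_\infty\|_{q,\alpha}$-version versus coordinatewise Bonferroni giving $\Upsilon_{q,\alpha}$) and taking the minimum delivers $f_2^*(y)$, with the polynomial rate scaling as $wm$ or $(wm)^{q/2-\alpha q}$ according to whether $\alpha\gtrless 1/2-1/q$.

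\textbf{Sub-exponential case and main obstacle.} Part (ii) uses the identical decomposition: Lemma \ref{m-approximationtheorem2} replaces the polynomial $m$-approximation inequality for the first piece, and a Bernstein/Fuk-Nagaev type exponential inequality for independent sub-exponential vectors (derived along the same lines as Lemma \ref{theorem for the sum exponential}, with $\Phi_{\psi_\nu,0}$ governing the blocks $Z_b$ of effective scale $\sqrt{wm}$) controls the second piece, yielding $f_1^\diamond$ and $f_2^\diamond$. I expect the main obstacle to be bookkeeping rather than probability: one must check carefully that the $\ell^{q/2}$, $\log p$, and $p$ prefactors arising from the $|\cdot|_\infty$-versus-coordinatewise routes combine, after taking the minimum, to exactly $\Theta_{q,\alpha}^q$, and that the block geometry indeed forces $Z_b$-independence (this is where $M\ge m$ is used). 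Once these are in place the probabilistic content is entirely inherited from Section \ref{sec:probability inequalities}.
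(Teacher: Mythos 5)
Your proposal is correct and follows essentially the same route as the paper: the same split $T_X-T_{Y,m}=(T_X-T_{X,m})+(T_{X,m}-T_{Y,m})$ with a union bound, the first piece handled by Lemmas \ref{m-approximationtheorem1} and \ref{lemma for m-approximation, polynomial tail} (resp.\ Lemma \ref{m-approximationtheorem2}) and the second by Lemmas \ref{theorem for the sum polynomial} and \ref{lemma for the sum, polynomial tail} (resp.\ Lemma \ref{theorem for the sum exponential}), with $\Theta_{q,\alpha}$ arising as the minimum of the coordinatewise-Bonferroni and $|\cdot|_\infty$ routes. Your extra remarks on the independence of the small-block sums and the effective length $wm$ simply make explicit what the paper leaves implicit.
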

\begin{proof}
Let $P_1 = \P(|T_X-T_{X, m}|_\infty \geq y/2)$ and $P_2 = \P(|T_{X, m}-T_{Y, m}|_\infty \geq y/2)$. Lemmas \ref{m-approximationtheorem1} and \ref{lemma for m-approximation, polynomial tail} imply that $P_1 \leq f^*_1(y)$. Write $T_{X, m}-T_{Y, m} = \sum_{b=1}^w \sum_{i \in S_b} X_{i, m}$. By Lemmas \ref{theorem for the sum polynomial} and \ref{lemma for the sum, polynomial tail}, we also have $P_2 \leq f^*_2(y)$. Hence both cases with $\alpha>1/2-1/q$ and $\alpha<1/2-1/q$ of Lemma \ref{lemmafordifference}(i) follow in view of $\P(|T_X-T_{Y, m}|_\infty \geq y) \leq P_1+P_2$.

The exponential moment case (ii) similarly follows from $P_1 \leq f^\diamond_1(y)$ and $P_2 \leq f^\diamond_2(y)$.
\end{proof}

\begin{lem}
\label{mainlemma}
Let $D=(d_{i j})_{i, j=1}^p$ be a diagonal matrix. Assume that there exist constants $c>0, c_2>c_1>0$ such that $c<\min_{1 \leq j \leq p} d_{jj}$ and $c_1 \leq \tilde{b}_{jj}/d_{jj} \leq c_2$ for all $1 \leq j \leq p$. Assume $\Psi_{q, 0}< \infty$ for some $q \geq 4$. Then for all $\lambda \in (0,1)$,
\begin{eqnarray*}
&&\sup\limits_{t \in \mathbb{R}} \left| \P(|D^{-1/2}T_{Y, m}/\sqrt{n}|_\infty \leq t)-\P(|D^{-1/2}T_{Z, m}/\sqrt{n}|_\infty \leq t)  \right| \\
&\lesssim &  w^{-1/8}(\Psi_{3, 0}^{3/4}\vee\Psi_{4, 0}^{1/2})(\log(pw/\lambda))^{7/8} +w^{-1/2}(\log(pw/\lambda))^{3/2} u_m(\lambda)+\lambda\\
&=:& h(\lambda, u_m(\lambda)),
\end{eqnarray*}
where the constant in $\lesssim$ depends on $c, c_1, c_2$, and $q$ and $\alpha$ for (i), and $\beta$ for (ii) below, and $u_m(\lambda) \le u^*_m(\lambda)$ in (i), and $u_m(\lambda) \le u^\diamond_m(\lambda)$ in (ii).

(i) Assume $\Theta_{q, \alpha}< \infty$ for some $q \geq 4$ and $\alpha>0$, then
\begin{equation}
u^*_m(\lambda) = \left\{
\begin{array}{ll}
 \max\{\Theta_{q, \alpha}(\lambda^{-1}w)^{1/q}M^{1/q-1/2}, \Psi_{2, \alpha}\sqrt{\log(pw/\lambda)}\}, & \alpha>1/2-1/q \\
 \max\{\Theta_{q, \alpha}(\lambda^{-1}w)^{1/q}M^{-\alpha}, \Psi_{2, \alpha}\sqrt{\log(pw/\lambda)}\}, & \alpha<1/2-1/q.
\end{array} \right.
\end{equation}

(ii) Assume $\Phi_{\psi_\nu, 0}<\infty$ for some $\nu\geq 0$. Then
\begin{equation}
u^\diamond_m(\lambda) = \max\{\Phi_{\psi_\nu, 0}(\log (pw/\lambda))^{1/\beta}, \sqrt{\log (pw/\lambda)}\}.
\end{equation}
\end{lem}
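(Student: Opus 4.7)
The core observation is that the large-block sums $Y_{b,m}$ are \emph{independent}: since each $X_{i,m}$ is $\sigma(\eps_{i-m},\ldots,\eps_i)$-measurable and the blocks $L_b$ are separated by small blocks of length $m$, the vectors $Y_{1,m},\ldots,Y_{w,m}$ form an i.i.d.\ sequence in $\mathbb{R}^p$ with covariance $M\tilde B$. Consequently $T_{Y,m}$ is a sum of $w$ i.i.d.\ random vectors and $T_{Z,m}$ is its Gaussian twin with the same covariance, so the problem reduces to a high-dimensional CLT for i.i.d.\ maxima. My plan is to normalise by $\tilde Y_b=D^{-1/2}Y_{b,m}/\sqrt M$ and $\tilde Z_b=D^{-1/2}Z_{b,m}/\sqrt M$; the hypothesis $c_1\le \tilde b_{jj}/d_{jj}\le c_2$ ensures that every coordinate of $\tilde Y_b$ has variance bounded above and below by positive constants, providing the nondegeneracy required by the \citet{chernozhukov2013} comparison machinery.

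I would then invoke a CCK-type Gaussian approximation for the max norm of centred i.i.d.\ sums. After truncating $\tilde Y_b$ coordinatewise at level $u$, that result yields a Kolmogorov distance bound of the shape
\[
C\Big\{w^{-1/8}(M_3^{3/4}\vee M_4^{1/2})\ell^{7/8} + w^{-1/2}\ell^{3/2} u + \P(\max_{b,j}|\tilde Y_{bj}|>u)\Big\},
\]
with $\ell=\log(pw/\lambda)$ and $M_k=\max_j\|\tilde Y_{bj}\|_k$. Burkholder's inequality, combined with Theorem~3 of \citet{wu2011asymptotic} and the lower bound $d_{jj}\ge c$, gives $\|\tilde Y_{bj}\|_k\lesssim \|X_{\cdot j}\|_{k,0}\le \Psi_{k,0}$ for $k\in\{3,4\}$, turning the first term into $w^{-1/8}(\Psi_{3,0}^{3/4}\vee \Psi_{4,0}^{1/2})\ell^{7/8}$. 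I would then take $u=u_m(\lambda)$ to be the smallest value for which $\P(\max_{b,j}|\tilde Y_{bj}|>u_m(\lambda))\le\lambda$, which converts the third term into the additive $\lambda$ and realises the middle term as $w^{-1/2}\ell^{3/2}u_m(\lambda)$.

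To compute $u_m(\lambda)$ in case (i), I apply Lemma~\ref{theorem for the sum polynomial} to the stationary sum $Y_{bj,m}$ of length $M$. This produces a polynomial tail $\lesssim M\|X_{\cdot j}\|_{q,\alpha}^q(\sqrt M v)^{-q}$ (respectively $M^{q/2-\alpha q}\|X_{\cdot j}\|_{q,\alpha}^q(\sqrt M v)^{-q}$ in the strongly dependent regime $\alpha<1/2-1/q$) plus a Gaussian-type piece $\exp(-Cv^2/\Psi_{2,\alpha}^2)$. A union bound over $(b,j)$, in which one takes whichever is smaller between the coordinatewise sum yielding $\Upsilon_{q,\alpha}$ and the $\ell_\infty$ refinement of Lemma~\ref{lemma for the sum, polynomial tail} with $s=\log p$ yielding $\||X_\cdot|_\infty\|_{q,\alpha}\log p$, introduces the quantity $\Theta_{q,\alpha}$; equating each half-contribution to $\lambda/2$ yields the two entries inside $\max\{\cdot,\cdot\}$ in $u_m^{*}(\lambda)$. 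Case (ii) follows the identical template with Lemma~\ref{theorem for the sum exponential} in place of Lemma~\ref{theorem for the sum polynomial}, its sub-exponential tail $\exp(-Cv^\beta/\Phi_{\psi_\nu,0}^\beta)$ delivering the $(\log(pw/\lambda))^{1/\beta}$ contribution, with the second entry of the $\max$ coming from the Gaussian second-moment piece.

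The main technical obstacle will be the truncation bias: once $\tilde Y_{bj}$ is truncated the residual is no longer exactly mean zero, so the CCK theorem must be invoked in a form that explicitly absorbs the conditional mean shift into the $w^{-1/2}\ell^{3/2}u$ term rather than into the leading $w^{-1/8}$ term. The remainder is careful bookkeeping: verifying that Burkholder passes cleanly through the diagonal rescaling, handling the case (i) split around $\alpha=1/2-1/q$ uniformly, and tracking $\log(pw/\lambda)$ throughout so that the union-bound slack surfaces exactly as the additive $\lambda$.
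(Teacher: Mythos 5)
Your proposal is correct and follows essentially the same route as the paper: exploit the $m$-gap between large blocks to get i.i.d.\ vectors $Y_{b,m}$, apply Theorem 2.2 of \citet{chernozhukov2013} to the normalized block sums, bound the third/fourth moments via Burkholder to get $\Psi_{3,0},\Psi_{4,0}$, and bound the quantile $u_m(\lambda)$ by combining Lemmas \ref{theorem for the sum polynomial}, \ref{lemma for the sum, polynomial tail} (giving $\Theta_{q,\alpha}$) and Lemma \ref{theorem for the sum exponential}. The only cosmetic difference is that the paper defines $u_m(\lambda)$ as the maximum of the $Y$-side and Gaussian-side quantiles (the latter controlled by the hypothesis $c_1\le \tilde b_{jj}/d_{jj}\le c_2$), which subsumes the truncation-bias issue you flag, since CCK's Theorem 2.2 is already stated in terms of that quantile.
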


\begin{proof}
For $1<l\leq q$, define $R_l=\max_{1 \leq j \leq p} \| M^{-1/2} Y_{bj, m}\|_l$.
Since $X_{ij, m}=\sum_{k=0}^m \mathcal{P}_{i-k}X_{ij}$, by Burkholder's inequality (\citet{burkholder1973}),
\begin{equation*}
\|\sum_{i=1}^M \mathcal{P}_{i-k}X_{ij}\|_l^2 \leq C_l \sum_{i=1}^M \|\mathcal{P}_{i-k}X_{ij}\|_l^2 \leq C_l M (\theta'_{k, l, j})^2,
\end{equation*}
then we have
\begin{equation}
\label{moment for the sum}
\|\sum_{i=1}^M X_{ij, m}\|_l \leq C_l \sum_{k=0}^m \|\sum_{i=1}^M \mathcal{P}_{i-k}X_{ij}\|_l \leq C_l M^{1/2} \Delta_{0, l, j},
\end{equation}
which implies $R_l \leq C_l \Psi_{l, 0}$.
For $0<\lambda<1$ and the diagonal matrix $D=(d_{ij})_{i,j=1}^p$, define $u_{Y, m}(\lambda)$ as the infimum over all numbers $u>0$ such that
\begin{equation*}
\P(|M^{-1/2} d_{jj}^{-1/2}Y_{bj, m}| \leq u, 1 \leq b \leq w, 1\leq j \leq p) \geq 1-\lambda.
\end{equation*}
Also define $u_{Z, m}(\lambda)$ by the corresponding quantity for the analogue Gaussian case, namely with $Y_{b, m}$ replaced by $Z_{b, m}$ in the above definition. Let $u_{m}(\lambda):= u_{Y, m}(\lambda) \vee u_{Z, m}(\lambda)$.
By Theorem 2.2 of \citet{chernozhukov2013}, for all $\lambda \in (0,1)$,
\begin{eqnarray*}
&&\sup\limits_{t \in \mathbb{R}} \left| \P(|D^{-1/2}T_{Y, m}/\sqrt{n}|_\infty \leq t)-\P(|D^{-1/2}T_{Z, m}/\sqrt{n}|_\infty \leq t)  \right| \\
&\lesssim & w^{-1/8}(R_3^{3/4}\vee R_4^{1/2})(\log(pw/\lambda))^{7/8}+w^{-1/2}(\log(pw/\lambda))^{3/2} u_m(\lambda)+\lambda,
\end{eqnarray*}
Now we shall find a bound on the function $u_{m}(\lambda)$.
(i) By Lemmas \ref{theorem for the sum polynomial} and \ref{lemma for the sum, polynomial tail}, we have
\begin{eqnarray*}
&&\P(|M^{-1/2} d_{jj}^{-1/2} Y_{bj, m}| > u \text{ for some } b, j) \leq \P(|M^{-1/2}Y_{b, m}|_\infty > c^{1/2}u) \\
&&\leq \left\{
\begin{array}{ll}
C_{q, \alpha} u^{-q}wM^{1-q/2} \Theta_{q, \alpha}^q+ C_{q,\alpha} pw \exp\lp -\frac{C_{q, \alpha}u^2}{\Psi_{2,\alpha}^2}\rp, & \alpha>1/2-1/q \\
C_{q,\alpha} u^{-q}w M^{-\alpha q}\Theta_{q, \alpha}^q + C_{q, \alpha} pw \exp\lp - \frac{C_{q, \alpha} u^2 }{\Psi_{2,\alpha}^2}\rp, & \alpha<1/2-1/q
\end{array}
\right. .
\end{eqnarray*}
This implies $u_{Y, m}(\lambda) \leq C_{q,\alpha} \max\{\Theta_{q, \alpha}(\lambda^{-1}w)^{1/q}M^{1/q-1/2}, \Psi_{2, \alpha}\sqrt{\log(pw/\lambda)}\}$ if $\alpha>1/2-1/q$ and $u_{Y, m}(\lambda) \leq C_{q, \alpha} \max\{\Theta_{q, \alpha}(\lambda^{-1}w)^{1/q}M^{-\alpha}, \Psi_{2, \alpha}\sqrt{\log(pw/\lambda)}\}$ if $\alpha<1/2-1/q$. For $u_{Z, m}(\lambda)$, since  $M^{-1/2} Z_{bj, m} \sim N(0, \tilde{b}_{jj})$, we have $\E(\exp\{M^{-1}Z^2_{bj, m}/(4\tilde{b}_{jj})\})\leq C$. Hence
\begin{eqnarray}
\P(|M^{-1/2}d_{jj}^{-1/2} Z_{bj, m}| > u \text{ for some } b, j) &\leq& \sum_{b=1}^w \sum_{j=1}^p\P(|M^{-1/2}Z_{bj, m}| > d_{jj}^{1/2}u) \nonumber \\ \label{gaussianboundforu}
&\leq & Cpw \exp(-d_{jj}u^2/(4\tilde{b}_{jj})).
\end{eqnarray}
With the assumption $c_1 \leq \tilde{b}_{jj}/d_{jj} \leq c_2$,  $u_{Z, m}(\lambda) \leq C \sqrt{\log(pw/\lambda)}$.\\
(ii)  By Bonferroni inequality and Lemma \ref{theorem for the sum exponential},
\begin{equation}
\label{exponentialboundforu}
\P(|M^{-1/2}d_{jj}^{-1/2} Y_{bj, m}| > u \text{ for some } b, j)
\leq C_\beta pw \exp\left\{-C_\beta\frac{u^\beta}{\Phi^\beta_{\psi_\nu, 0}}\right\},
\end{equation}
where $\beta=2/(1+2\nu)$ and $C_\beta$ is a constant that depends on $\beta$ only. Combining (\ref{gaussianboundforu}) and (\ref{exponentialboundforu}), it follows that $u_{m}(\lambda)\leq C_{\beta}\max\{\Phi_{\psi_\nu, 0}(\log (pw/\lambda))^{1/\beta}, \sqrt{\log (pw/\lambda)}\}$.
\end{proof}

Now we consider the comparison between $Z$ and $T_{Z, m}$. Let $\pi(x)= x^{1/3}(1 \vee \log(p/x))^{2/3}$ for $x>0$.
\begin{lem}
\label{lemmafortwogaussian}
Assume $\Psi_{2, \alpha}<\infty$ for some $\alpha>0$. Let $D=(d_{ij})_{i, j=1}^p$ be a diagonal matrix such that there exist some constants $0< C_1< C_2$ such that $C_1 \leq \sigma_{jj}/d_{jj} \leq C_2$ for all $1 \leq j \leq p$. Then we have
\begin{eqnarray*}
\label{Tz,m-Tz}
&&\sup\limits_{t \in \mathbb{R}} \left| \P(|D^{-1/2}T_{Z, m}/\sqrt{n}|_\infty \leq t)-\P(|D^{-1/2}Z|_\infty \leq t)  \right| \\
&\lesssim & \pi(\max_{1\leq j \leq p}d^{-1}_{jj}\Psi_{2, \alpha} \Psi_{2, 0} (m^{-\alpha}+v(M))+wm/n),
\end{eqnarray*}
where $v(M)$ is the same as defined in Corollary \ref{corollary for Sigma}.
\end{lem}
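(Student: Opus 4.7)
The plan is to reduce the comparison between $|D^{-1/2}T_{Z,m}/\sqrt{n}|_\infty$ and $|D^{-1/2}Z|_\infty$ to a Gaussian max-comparison inequality, and then carefully estimate the max-entry difference of the two covariance matrices. Since $T_{Z,m}=\sum_{b=1}^w Z_{b,m}$ with $Z_{b,m}$ i.i.d.\ $N(0,M\tilde B)$, we have $T_{Z,m}/\sqrt n\sim N(0,(wM/n)\tilde B)$, so both $D^{-1/2}T_{Z,m}/\sqrt n$ and $D^{-1/2}Z$ are centered Gaussian vectors in $\mathbb R^p$. Lemma 3.1 of \citet{chernozhukov2013} (Gaussian max-comparison) then controls the Kolmogorov distance of their $|\cdot|_\infty$-distributions by $\pi(\Delta_n)$, where
\begin{equation*}
\Delta_n \;=\; \bigl|D^{-1/2}\bigl((wM/n)\tilde B - \Sigma\bigr)D^{-1/2}\bigr|_\infty.
\end{equation*}
So the whole proof reduces to bounding $\Delta_n$.

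For this, I would use the decomposition
\begin{equation*}
(wM/n)\tilde B - \Sigma \;=\; (wM/n)(\tilde B - B) \;+\; (wM/n)(B-\Sigma) \;+\; \bigl((wM/n)-1\bigr)\Sigma,
\end{equation*}
exploiting the exact identity $B=\mathrm{Cov}(Y_b/\sqrt M)=\sum_{|k|<M}(1-|k|/M)\Gamma(k)=\Sigma_M$ (since $|L_b|=M$ and $(X_i)$ is stationary). The three pieces are then handled as follows. First, for $|\tilde B-B|_\infty$, the $(j,k)$ entry equals $M^{-1}\E[Y_{bj,m}Y_{bk,m}-Y_{bj}Y_{bk}]$; by Cauchy--Schwarz and the $L^2$ bound $\|\sum_{i\in L_b}(X_{ij}-X_{ij,m})\|_2\le\sqrt M\,\Delta_{m+1,2,j}\le\sqrt M\,(m+1)^{-\alpha}\|X_{\cdot j}\|_{2,\alpha}$ (from Burkholder applied to $\sum_{k>m}\mathcal P_{i-k}X_{ij}$ plus the definition of $\|\cdot\|_{2,\alpha}$), combined with $\|Y_{bj}\|_2\lesssim\sqrt M\Psi_{2,0}$, we obtain $|\tilde B-B|_\infty\lesssim\Psi_{2,0}\Psi_{2,\alpha}m^{-\alpha}$. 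Second, for $|\Sigma_M-\Sigma|_\infty$, the entrywise error is bounded by $\sum_{|k|\ge M}|\gamma_{jk}(k)|+M^{-1}\sum_{|k|<M}|k|\,|\gamma_{jk}(k)|$; combining with $|\gamma_{jk}(\ell)|\lesssim\|X_{\cdot j}\|_{2,0}\|X_{\cdot k}\|_{2,\alpha}(|\ell|+1)^{-\alpha}$ and splitting into cases $\alpha>1,\alpha=1,\alpha<1$ gives the bias rate $\lesssim\Psi_{2,0}\Psi_{2,\alpha}v(M)$ with $v(M)$ as in Corollary \ref{corollary for Sigma}. Third, $(wM/n)-1=-m/(M+m)$ contributes $(wm/n)|D^{-1/2}\Sigma D^{-1/2}|_\infty$, and the normalization assumption $C_1\le\sigma_{jj}/d_{jj}\le C_2$ together with $|\sigma_{jk}|\le\sqrt{\sigma_{jj}\sigma_{kk}}\le C_2\sqrt{d_{jj}d_{kk}}$ makes this at most a constant multiple of $wm/n$.

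Combining these three estimates under $D^{-1/2}\cdot D^{-1/2}$ normalization (using $d_{jj}^{-1/2}d_{kk}^{-1/2}\le\max_j d_{jj}^{-1}$) yields
\begin{equation*}
\Delta_n \;\lesssim\; \max_{1\le j\le p}d_{jj}^{-1}\,\Psi_{2,0}\Psi_{2,\alpha}\bigl(m^{-\alpha}+v(M)\bigr) \;+\; wm/n,
\end{equation*}
and the conclusion follows by the monotonicity of $\pi(x)=x^{1/3}(1\vee\log(p/x))^{2/3}$.

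The main obstacle is keeping the right dependence-adjusted-norm factors through the two bias terms. In particular, $|\Sigma_M-\Sigma|_\infty$ has to be split into three $\alpha$-dependent regimes to match the piecewise definition of $v(M)$, and the bound on $|\tilde B-B|_\infty$ requires invoking Burkholder entrywise on the martingale-difference expansion $X_{ij}-X_{ij,m}=\sum_{k>m}\mathcal P_{i-k}X_{ij}$ so that the resulting $\Delta_{m+1,2,j}$ can be converted into $(m+1)^{-\alpha}\|X_{\cdot j}\|_{2,\alpha}$ uniformly in $j$; only then does the uniform factor $\Psi_{2,\alpha}\Psi_{2,0}$ emerge from the Cauchy--Schwarz pairing with $\|Y_{bk}\|_2$.
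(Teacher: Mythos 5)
Your overall skeleton matches the paper's: reduce to a Gaussian comparison bound $\pi(\Delta_n)$ for the two centered Gaussian vectors (the paper invokes Theorem 2 of \citet{chernozhukov2014comparison} rather than Lemma 3.1 of \citet{chernozhukov2013}, but that is immaterial), decompose $(wM/n)\tilde B-\Sigma$ into $\tilde B-B$, $B-\Sigma=\Sigma_M-\Sigma$, and the $(1-wM/n)\Sigma$ piece, and handle $|\tilde B-B|_\infty$ and the $wm/n$ term exactly as you do.

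However, your treatment of the bias term $|\Sigma_M-\Sigma|_\infty$ has a genuine gap. You propose to bound each autocovariance pointwise by $|\gamma_{jk}(\ell)|\lesssim \Psi_{2,0}\Psi_{2,\alpha}(|\ell|+1)^{-\alpha}$ and then sum. That pointwise bound is true, but summing it does not give $v(M)$: the tail sum $\sum_{|\ell|\ge M}(|\ell|+1)^{-\alpha}$ diverges for every $0<\alpha\le 1$ (precisely the regime where $v(M)=M^{-\alpha}$ or $\log M/M$ is claimed), and for $1<\alpha<2$ it only gives $M^{1-\alpha}$, which is strictly worse than the asserted $v(M)=1/M$; likewise the triangular-weight term $M^{-1}\sum_{|\ell|<M}|\ell|\,|\gamma_{jk}(\ell)|$ yields $M^{1-\alpha}$ rather than $1/M$ when $1<\alpha<2$. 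So your route recovers the stated rate only when $\alpha>2$. The missing idea is to keep the double sum coming from $|\gamma_{jk}(\ell)|\le\sum_{h\ge0}\delta_{h,2,j}\,\delta_{h+\ell,2,k}$ and exchange the order of summation \emph{before} bounding: then
\begin{equation*}
\sum_{\ell>M}|\gamma_{jk}(\ell)|\le \sum_{h\ge0}\delta_{h,2,j}\sum_{\ell>M}\delta_{h+\ell,2,k}
\le \Delta_{0,2,j}\,\Delta_{M+1,2,k}\lesssim \Psi_{2,0}\Psi_{2,\alpha}M^{-\alpha},
\end{equation*}
and similarly $M^{-1}\sum_{\ell=1}^{M}\ell\,|\gamma_{jk}(\ell)|\le 2M^{-1}\Delta_{0,2,j}\sum_{\ell=1}^{M}\Delta_{\ell,2,k}\lesssim \Psi_{2,0}\Psi_{2,\alpha}\,v(M)$, with the three regimes of $v(M)$ arising from $\sum_{\ell=1}^M(\ell+1)^{-\alpha}$. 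In other words, the $\alpha$-decay must be applied to the tail sums $\Delta_{\ell,2,k}$, not to the individual autocovariances; with this correction the rest of your argument goes through and coincides with the paper's proof.
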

\begin{proof}
By the definition of $T_{Z, m}$ and $Z$ and (\ref{eq:1507172300}),
\begin{eqnarray*}
&&\Sigma^{Z, m}:=\Cov(D^{-1/2}T_{Z, m}/\sqrt{n})= \frac{Mw}{n}D^{-1/2}\tilde{B}D^{-1/2}, \\ &&\Sigma^{Z}:=\Cov(D^{-1/2}Z)= D^{-1/2}\Sigma D^{-1/2}.
\end{eqnarray*}
Let $S_{M j} = \sum_{i=1}^M X_{ij}$ and $S_{M j, m} = \sum_{i=1}^M X_{ij, m}$. By the moment inequality in \citet{Wu2005}, $ \| S_{M j} \|_2 \leq M^{1/2} \Delta_{0, 2, j}$, $\| S_{M j, m}\|_2 \leq M^{1/2} \Delta_{0, 2, j}$ and  $\|S_{M j} - S_{M j, m}  \|_2 \leq M^{1/2} \Delta_{m+1, 2, j}$. Note that $b_{jk}=M^{-1}\E(S_{M j} S_{M k})$ and $\tilde{b}_{jk}=M^{-1}\E(S_{M j, m}  S_{M k, m})$. Then
\begin{eqnarray*}
|b_{jk}-\tilde{b}_{jk}|&=& \frac{1}{M} |\E (S_{M j} S_{M k}  - S_{M j, m}  S_{M k, m})| \\
&\leq & \frac{1}{M}\left(\|S_{M j}  \|_2\cdot\|S_{M k}  - S_{M k, m}\|_2+\|S_{M k, m} \|_2\cdot\| S_{M j} - S_{M j, m}  \|_2\right) \cr
&\leq & 2\Psi_{2, \alpha}\Psi_{2, 0} m^{-\alpha}.
\end{eqnarray*}
Recall that $\sigma_{jk}=\sum_{l=-\infty}^\infty \gamma_{jk}(l)$ and
\begin{equation*}
b_{jk}=M^{-1}\E(S_{Mj} S_{Mk})=M^{-1}\sum_{l=-M}^M (M-|l|)\gamma_{jk}(l).
\end{equation*}
It follows that
\begin{equation*}
\sigma_{jk}-b_{jk}=\sum_{|l|> M} \gamma_{jk}(l) +M^{-1}\sum_{l=-M}^M |l| \gamma_{jk}(l).
\end{equation*}
By $X_{ij}=\sum_{h=0}^\infty \mathcal{P}^{i-h}X_{ij}$, we have
\begin{equation*}
|\gamma_{jk}(l)|=|\sum_{h=0}^\infty \E[(\mathcal{P}^{-h}X_{0j})(\mathcal{P}^{-h}X_{lk})]| \leq \sum_{h=0}^\infty |\E[(\mathcal{P}^{-h}X_{0j})(\mathcal{P}^{-h}X_{lk})]| \leq \sum_{h=0}^\infty \delta_{h, 2, j}\delta_{h+l, 2, k}.
\end{equation*}
Hence, it can be obtained that
\begin{equation*}
\left|\sum_{|l|>M} \gamma_{jk}(l)\right| \leq 2 \sum_{l=M+1}^\infty |\gamma_{jk}(l)| \leq 2\sum_{l=M+1}^\infty \sum_{h=0}^\infty \delta_{h, 2, j}\delta_{h+l, 2, k} \leq 2 \Delta_{0, 2, j}\Delta_{M+1, 2, k},
\end{equation*}
and
\begin{equation*}
\left|\frac{1}{M}\sum_{l=-M}^M |l| \gamma_{jk}(l)\right| \leq \frac{2}{M} \sum_{l=1}^M \sum_{\iota=k}^M \sum_{h=0}^\infty \delta_{h, 2, j}\delta_{h+\iota, 2, k} \leq \frac{2}{M} \Delta_{0, 2, j} \sum_{l=1}^M \Delta_{l, 2, k}.
\end{equation*}
Since $\Delta_{0, 2, j} \leq \Psi_{2,0}$ and $\Delta_{m, 2, j} \leq  \Psi_{2, \alpha}m^{-\alpha}$, $\max_{1 \leq j,k \leq p}|b_{jk}-\sigma_{jk}|\leq \Psi_{2,\alpha}\Psi_{2,0}v(M)$. Hence,
\begin{eqnarray*}
|\Sigma^{Z, m}-\Sigma^{Z}|_\infty &\leq & \max_{1\leq j \leq p}d^{-1}_{jj}(|\tilde{B}-B|_\infty+ |B-\Sigma|_{\infty})+(1-Mw/n)|D^{-1/2}\Sigma D^{-1/2}|_\infty\\
&\leq &\max_{1\leq j \leq p}d^{-1}_{jj}\Psi_{2, \alpha} \Psi_{2, 0} (m^{-\alpha}+v(M))+ C_2 wm/n.
\end{eqnarray*}
By Theorem 2 of \citet{chernozhukov2014comparison}, the result follows.
\end{proof}

\begin{thm}
\label{maintheorem}
 Let $\Sigma_0$ be the diagonal matrix of the long run covariance matrix $\Sigma$ and $D_0=\Sigma_0^{1/2}$. Let Assumption \ref{assumption:1507172202} be satisfied. (i) Assume that $\Theta_{q, \alpha} < \infty$ holds with some $q \geq 4$ and $\alpha>0$. Then for every $\lambda \in (0, 1)$ and $\eta>0$,
\begin{eqnarray}
\rho_n&:=&\sup\limits_{t \in \mathbb{R}} \left| \P(|D_0^{-1}T_{X}/\sqrt{n}|_\infty \leq t)-\P(|D_0^{-1}Z|_\infty \leq t)  \right| \nonumber \\ \label{maintheorem1}
&\lesssim &  f^*(\sqrt{n}\eta)+ \eta \sqrt{\log p}+ h(\lambda, u^*_m(\lambda))+\pi(\Psi_{2, \alpha}\Psi_{2, 0} (m^{-\alpha}+v(M))+wm/n).
\end{eqnarray}
(ii) Assume $\Phi_{\psi_\nu, \alpha} < \infty$ for some $\nu \geq 0$ and $\alpha>0$. Then for every $\lambda \in (0, 1)$ and $\eta>0$,
\begin{eqnarray}
\rho_n&:=&\sup\limits_{t \in \mathbb{R}} \left| \P(|D_0^{-1}T_{X}/\sqrt{n}|_\infty \leq t)-\P(|D_0^{-1}Z|_\infty \leq t)  \right| \nonumber \\ \label{maintheorem2}
&\lesssim & f^\diamond(\sqrt{n}\eta)+ \eta \sqrt{\log p}+ h(\lambda, u^\diamond_m(\lambda))+\pi(\Psi_{2, \alpha}\Psi_{2, 0} (m^{-\alpha}+v(M))+wm/n).
\end{eqnarray}
\end{thm}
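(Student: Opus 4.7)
The plan is to prove the bound via the block-and-$m$-dependence telescope $T_X \rightsquigarrow T_{Y,m} \rightsquigarrow T_{Z,m} \rightsquigarrow Z$ and combine the three lemmas already at hand with an anti-concentration inequality. For any Borel set $A$ and any $\eta>0$,
\begin{equation*}
\P(|D_0^{-1} T_X/\sqrt n|_\infty \le t)
 \le \P(|D_0^{-1} T_{Y,m}/\sqrt n|_\infty \le t+\eta)
   + \P(|D_0^{-1}(T_X-T_{Y,m})|_\infty > \sqrt n \eta).
\end{equation*}
Under Assumption \ref{assumption:1507172202} the factor $D_0^{-1}$ is bounded by $c^{-1/2}$, so the second probability is controlled by Lemma \ref{lemmafordifference}, yielding the term $f^*(\sqrt n \eta)$ in part (i) and $f^\diamond(\sqrt n \eta)$ in part (ii), up to constants depending only on $c$.

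Next I would sandwich the middle probability by the two Gaussian comparison lemmas. With $D=\Sigma_0$ (so $d_{jj}=\sigma_{jj}$), the hypothesis $c_1 \le \tilde b_{jj}/\sigma_{jj}\le c_2$ of Lemma \ref{mainlemma} holds for large enough $M$ and $m$: the computation $|\tilde b_{jj}-\sigma_{jj}| \lesssim \Psi_{2,\alpha}\Psi_{2,0}(m^{-\alpha}+v(M))$ inside the proof of Lemma \ref{lemmafortwogaussian}, together with $\sigma_{jj}\ge c$, gives the two-sided ratio control. Lemma \ref{mainlemma} then yields
\begin{equation*}
\bigl|\P(|D_0^{-1}T_{Y,m}/\sqrt n|_\infty \le s)-\P(|D_0^{-1}T_{Z,m}/\sqrt n|_\infty \le s)\bigr|
 \lesssim h(\lambda,u_m(\lambda)),
\end{equation*}
with $u_m(\lambda)=u^*_m(\lambda)$ in (i) and $u_m(\lambda)=u^\diamond_m(\lambda)$ in (ii). Applying Lemma \ref{lemmafortwogaussian} to the pair $(T_{Z,m},Z)$ contributes the fourth term $\pi(\Psi_{2,\alpha}\Psi_{2,0}(m^{-\alpha}+v(M))+wm/n)$.

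To pass from the shifted threshold $t+\eta$ back to $t$ on the Gaussian side, I would invoke Nazarov's anti-concentration inequality for the maximum of a centered Gaussian vector whose diagonal entries of the covariance are bounded below by a positive constant (which is precisely what Assumption \ref{assumption:1507172202} gives for $D_0^{-1}Z$), obtaining
\begin{equation*}
\sup_{t}\P(t \le |D_0^{-1}Z|_\infty \le t+\eta) \lesssim \eta\sqrt{\log p},
\end{equation*}
which absorbs the $\eta$-perturbation. A symmetric argument for the lower bound, combined by the triangle inequality, then yields
\begin{equation*}
\rho_n \lesssim f^\bullet(\sqrt n \eta)+\eta\sqrt{\log p}+h(\lambda,u_m(\lambda))+\pi(\Psi_{2,\alpha}\Psi_{2,0}(m^{-\alpha}+v(M))+wm/n),
\end{equation*}
with $f^\bullet$ equal to $f^*$ (resp.\ $f^\diamond$) and $u_m$ equal to $u^*_m$ (resp.\ $u^\diamond_m$) in part (i) (resp.\ (ii)).

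The main obstacle is not any single estimate but the careful bookkeeping: verifying that the multiplicative constant $c^{-1/2}$ can be absorbed into the constants of Lemmas \ref{lemmafordifference}, \ref{mainlemma} and \ref{lemmafortwogaussian} without changing the stated rate, and checking that the auxiliary block decomposition $n=(M+m)w$ with alternating long and short blocks used to define $T_{Y,m}$ is compatible with the polynomial/exponential tail estimates of Lemmas \ref{m-approximationtheorem1}, \ref{theorem for the sum polynomial}, \ref{lemma for m-approximation, polynomial tail} and \ref{lemma for the sum, polynomial tail}, which is what underlies Lemma \ref{lemmafordifference}. Once the constants are tracked, Theorem \ref{th:1507140259} and Theorem \ref{th:1507140325} then follow by specific optimal choices of $m$, $M$, $w$, $\eta$ and $\lambda$ that make each of the four terms vanish under the stated moment, dimension and dependence conditions.
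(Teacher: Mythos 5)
Your proposal is correct and follows essentially the same route as the paper: the telescoping $T_X \rightsquigarrow T_{Y,m} \rightsquigarrow T_{Z,m} \rightsquigarrow Z$ handled by Lemmas \ref{lemmafordifference}, \ref{mainlemma} and \ref{lemmafortwogaussian}, with the $\eta$-shift absorbed by the Gaussian anti-concentration bound $\eta\sqrt{\log p}$ (the paper cites Theorem 3 of \citet{chernozhukov2014comparison}, i.e.\ the same Nazarov-type inequality you invoke, applied to $D_0^{-1}Z$ whose components have unit variance). Your extra remark on verifying $c_1 \le \tilde b_{jj}/\sigma_{jj} \le c_2$ via $|\tilde b_{jj}-\sigma_{jj}|\lesssim \Psi_{2,\alpha}\Psi_{2,0}(m^{-\alpha}+v(M))$ is a detail the paper leaves implicit, but it does not change the argument.
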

\begin{proof}
(i)
By Lemma \ref{mainlemma} (i) and Lemma \ref{lemmafortwogaussian}, we have for every $\lambda \in (0, 1)$,
\begin{eqnarray}
&&\sup\limits_{t \in \mathbb{R}} \left| \P(|D_0^{-1}T_{Y, m}/\sqrt{n}|_\infty \leq t)-\P(|D_0^{-1}Z|_\infty \leq t)  \right| \nonumber  \\ &\lesssim & h(\lambda, u^*_m(\lambda))+\pi(\Psi_{2, \alpha}\Psi_{2, 0} (m^{-\alpha}+v(M))+wm/n). \label{sup_part1}
\end{eqnarray}
Observe that each component of the Gaussian vector $D_0^{-1}Z$ has variance 1. By Theorem 3 of \citet{chernozhukov2014comparison}, for every $\eta>0$,
\begin{equation}
\label{anticoncentrationforSZ}
\sup\limits_{t \in \mathbb{R}} \P(\left||D_0^{-1}Z|_\infty-t \right|\leq \eta) \lesssim \eta \sqrt{\log p}.
\end{equation}
By the triangle inequality, for every $\eta>0$, we have
\begin{eqnarray*}
&&\sup\limits_{t \in \mathbb{R}} \left| \P(|D_0^{-1}T_{X}/\sqrt{n}|_\infty > t)-\P(|D_0^{-1}T_{Y, m}/\sqrt{n}|_\infty > t)  \right|\\
&\leq& \P(|D_0^{-1}(T_{X}-T_{Y, m})/\sqrt{n}|_{\infty}>\eta)+\sup\limits_{t \in \mathbb{R}} \P(\left||D_0^{-1}T_{Y, m}/\sqrt{n}|_\infty-t \right|\leq \eta),
\end{eqnarray*}
which implies Theorem \ref{maintheorem} (i) in view of Lemma \ref{lemmafordifference} (i), (\ref{sup_part1}) and (\ref{anticoncentrationforSZ}).

(ii) Inequality (\ref{maintheorem2}) can be obtained by replacing $f^*$ and $u^*_m$ with $f^\diamond$ and $u^\diamond_m$ in the above proof.
\end{proof}

\subsection{Proof of Theorem \ref{th:1507140259}}

\begin{proof}
Recall (\ref{eq:1508122226}) for $f^*(\cdot)$. By Theorem \ref{maintheorem}, for $\alpha > 1/2-1/q$, to have (\ref{eq:J021209}), we need
\begin{equation}
\label{b}  \pi(\Psi_{2, \alpha}\Psi_{2, 0} (m^{-\alpha}+v(M))+wm/n) \rightarrow 0
\end{equation}
and for some $\eta>0$ and $\lambda \in (0,1)$,
\begin{eqnarray}
\label{c} && f^*(\sqrt{n}\eta)+\eta \sqrt{\log p}\rightarrow 0,\\
\label{d} && h(\lambda, u^*_m(\lambda))\rightarrow 0.
\end{eqnarray}
Firstly, (\ref{b}) requires $m \gg L_2$, $wm \ll n(\log p)^{-2}$, $w \ll n(\log p)^{-2} (\Psi_{2, \alpha}\Psi_{2,0})^{-1}$ if $\alpha>1$ and $w \ll n/L_2$ if $0<\alpha <1$. Moreover, (\ref{c}) requires $m \gg \max(L_1, (\Psi_{2, \alpha} \log p)^{1/\alpha})$ and $w m \ll \min (N_1, N_2)$. And (\ref{d}) needs (\ref{eq:1507142040}) and $w \gg \max(W_1, W_2)$. We also need $M\asymp n/w \gg m$. Notice that $(\Psi_{2, \alpha} \log p)^{1/\alpha} \lesssim L_2$, $N_2 \lesssim n(\log p)^{-2}$ and $N_2 \leq n(\log p)^{-2}(\Psi_{2, \alpha}\Psi_{2,0})^{-1}$. If
\begin{equation}
\label{e}
\max(L_1, L_2) \max(W_1, W_2) = o(1) \min(n, N_1, N_2),
\end{equation}
then we can always choose $m$ and $w$ such that (\ref{eq:J021209}) holds. Observe that $N_2 \lesssim n$, then (\ref{e}) is reduced to (\ref{eq:1507142041}).

For $0< \alpha < 1/2-1/q$, the function $f^*$ in (\ref{c}) is replaced by $f^\diamond$ (cf. (\ref{eq:1508122228})), which implies $\Theta_{q, \alpha} (\log p)^{1/2} = o(n^{\alpha})$, $m \gg (\Psi_{2, \alpha} \log p)^{1/\alpha} $ and $wm \ll \min(N_2, N_3)$. And $u^*_m$ in (\ref{d}) is replaced by $u_m^\diamond$, implying $w \gg \max(W_1, W_2, W_3)$. By the similar argument, if (\ref{eq:1508010523}) is further assumed, then (\ref{eq:J021209}) also holds for the case $0<\alpha < 1/2-1/q$.
\end{proof}

\begin{rmk}
In the proof of Theorem \ref{th:1507140259}, we exclude the case $\alpha=1$ when $\alpha>1/2-1/q$. If $\alpha=1$, we need to impose the additional assumption
\begin{equation}
\label{add}
\max(W_1, W_2)=o(n/(L_2\log n))
\end{equation}
to ensure (\ref{b}). The above condition is very mild since (\ref{eq:1507142041}) implies $\max(W_1, W_2) = o(n/L_2)$. If $\log n \lesssim (\log p)^2 \Psi_{2, \alpha}^2$, which trivially holds in the high-dimensional case $p \asymp n^\kappa$ with some $\kappa > 0$, we have $N_2= O(n/\log n)$ and hence (\ref{eq:1507142041}) implies (\ref{add}). Similarly, it is further assumed $\max(W_1, W_4)=o(n/(L_2\log n))$ in Theorem \ref{th:1507140325} if $\alpha=1$.
\end{rmk}

\bibliographystyle{plainnat}
\bibliography{Reference}

\begin{thebibliography}{38}
\providecommand{\natexlab}[1]{#1}
\providecommand{\url}[1]{\texttt{#1}}
\expandafter\ifx\csname urlstyle\endcsname\relax
  \providecommand{\doi}[1]{doi: #1}\else
  \providecommand{\doi}{doi: \begingroup \urlstyle{rm}\Url}\fi

\bibitem[Alexopoulos and Goldsman(2004)]{Alexopoulos2004}
Christos Alexopoulos and David Goldsman.
\newblock To batch or not to batch?
\newblock \emph{ACM Trans. Model. Comput. Simul.}, 14\penalty0 (1):\penalty0
  76--114, 2004.

\bibitem[Anderson(1971)]{anderson1971}
T.W. Anderson.
\newblock \emph{The Statistical Analysis of Time Series}.
\newblock Wiley, 1971.

\bibitem[Bradley(2007)]{bradley2007introduction}
R.C. Bradley.
\newblock \emph{Introduction to Strong Mixing Conditions}.
\newblock Kendrick Press, 2007.

\bibitem[Brockwell and Davis(1991)]{brockwell1991}
P.J. Brockwell and R.A. Davis.
\newblock \emph{Time Series: Theory and Methods}.
\newblock Springer, 1991.

\bibitem[B{\"u}hlmann(2002)]{buhlmann2002}
Peter B{\"u}hlmann.
\newblock Bootstraps for time series.
\newblock \emph{Statistical Science}, 17\penalty0 (1):\penalty0 52--72, 05
  2002.

\bibitem[Burkholder(1973)]{burkholder1973}
D.~L. Burkholder.
\newblock Distribution function inequalities for martingales.
\newblock 1\penalty0 (1):\penalty0 19--42, 02 1973.

\bibitem[Chen et~al.(2015)Chen, Shao, and Wu]{csw2015}
Xiaohong Chen, Qi-Man Shao, and Wei~Biao Wu.
\newblock Self-normalized cramer type moderate deviations under dependence.
\newblock \emph{Manuscript}, 2015.

\bibitem[Chen et~al.(2013)Chen, Xu, and Wu]{chen2013}
Xiaohui Chen, Mengyu Xu, and Wei~Biao Wu.
\newblock Covariance and precision matrix estimation for high-dimensional time
  series.
\newblock \emph{The Annals of Statistics}, 41\penalty0 (6):\penalty0
  2994--3021, 12 2013.

\bibitem[Chernozhukov et~al.(2013{\natexlab{a}})Chernozhukov, Chetverikov, and
  Kato]{chernozhukov2013}
Victor Chernozhukov, Denis Chetverikov, and Kengo Kato.
\newblock Gaussian approximations and multiplier bootstrap for maxima of sums
  of high-dimensional random vectors.
\newblock \emph{The Annals of Statistics}, 41\penalty0 (6):\penalty0
  2786--2819, 12 2013{\natexlab{a}}.

\bibitem[Chernozhukov et~al.(2013{\natexlab{b}})Chernozhukov, Chetverikov, and
  Kato]{chernozhukov2013testing}
Victor Chernozhukov, Denis Chetverikov, and Kengo Kato.
\newblock Testing many moment inequalities.
\newblock \emph{arXiv preprint arXiv:1312.7614}, 2013{\natexlab{b}}.

\bibitem[Chernozhukov et~al.(2014)Chernozhukov, Chetverikov, and
  Kato]{chernozhukov2014comparison}
Victor Chernozhukov, Denis Chetverikov, and Kengo Kato.
\newblock Comparison and anti-concentration bounds for maxima of gaussian
  random vectors.
\newblock \emph{Probability Theory and Related Fields}, 162\penalty0
  (1-2):\penalty0 47--70, 2014.

\bibitem[Dedecker et~al.(2007)Dedecker, Doukhan, Lang, Rafael, Louhichi, and
  Prieur]{dedecker2007weak}
J{\'e}r{\^o}me Dedecker, Paul Doukhan, Gabriel Lang, Le{\'o}n R~Jos{\'e}
  Rafael, Sana Louhichi, and Cl{\'e}mentine Prieur.
\newblock \emph{Weak Dependence: With Examples and Applications}.
\newblock Springer, 2007.

\bibitem[Einmahl and Li(2008)]{einmahl2008characterization}
Uwe Einmahl and Deli Li.
\newblock Characterization of lil behavior in banach space.
\newblock \emph{Transactions of the American Mathematical Society},
  360\penalty0 (12):\penalty0 6677--6693, 2008.

\bibitem[Ibragimov and Linnik(1971)]{ibragimov1971independent}
I.A. Ibragimov and I.U.I.U.V. Linnik.
\newblock \emph{Independent and Stationary Sequences of Random Variables}.
\newblock Wolters-Noordhoff., 1971.

\bibitem[Jiang(2004)]{jiang2004}
Tiefeng Jiang.
\newblock The asymptotic distributions of the largest entries of sample
  correlation matrices.
\newblock 14\penalty0 (2):\penalty0 865--880, 05 2004.

\bibitem[Kramer et~al.(2009)Kramer, Eden, Cash, and
  Kolaczyk]{PhysRevE.79.061916}
Mark~A. Kramer, Uri~T. Eden, Sydney~S. Cash, and Eric~D. Kolaczyk.
\newblock Network inference with confidence from multivariate time series.
\newblock \emph{Phys. Rev. E}, 79:\penalty0 061916, 06 2009.

\bibitem[Lahiri(2003)]{lahiri2003resampling}
S.N. Lahiri.
\newblock \emph{Resampling Methods for Dependent Data}.
\newblock Springer, 2003.

\bibitem[Li and Rosalsky(2006)]{Li2006}
Deli Li and Andrew Rosalsky.
\newblock Some strong limit theorems for the largest entries of sample
  correlation matrices.
\newblock \emph{The Annals of Applied Probability}, 16\penalty0 (1):\penalty0
  423--447, 2006.

\bibitem[Liu et~al.(2008)Liu, Lin, and Shao]{liu2008}
Wei-Dong Liu, Zhengyan Lin, and Qi-Man Shao.
\newblock The asymptotic distribution and berry¨cesseen bound of a new test for
  independence in high dimension with an application to stochastic
  optimization.
\newblock 18\penalty0 (6):\penalty0 2337--2366, 12 2008.

\bibitem[Liu and Wu(2010)]{liu2010asymptotics}
Weidong Liu and Wei~Biao Wu.
\newblock Asymptotics of spectral density estimates.
\newblock \emph{Econometric Theory}, 26\penalty0 (4):\penalty0 1218--1245,
  2010.

\bibitem[Nagaev(1979)]{nagaev1979large}
Sergey~V Nagaev.
\newblock Large deviations of sums of independent random variables.
\newblock \emph{The Annals of Probability}, pages 745--789, 1979.

\bibitem[Pinelis(1994)]{pinelis1994optimum}
Iosif Pinelis.
\newblock Optimum bounds for the distributions of martingales in banach spaces.
\newblock \emph{The Annals of Probability}, pages 1679--1706, 1994.

\bibitem[Politis et~al.(1999)Politis, Romano, and Wolf]{politis1999subsampling}
D.N. Politis, J.P. Romano, and M.~Wolf.
\newblock \emph{Subsampling}.
\newblock Springer, 1999.

\bibitem[Portnoy(1986)]{portnoy1986central}
Stephen Portnoy.
\newblock On the central limit theorem in ${R}^p$ when $p\rightarrow \infty$.
\newblock \emph{Probability Theory and Related Fields}, 73\penalty0
  (4):\penalty0 571--583, 1986.

\bibitem[Priestley(1988)]{priestley1988non}
Maurice~Bertram Priestley.
\newblock Non-linear and non-stationary time series analysis.
\newblock 1988.

\bibitem[Priestley(1981)]{priestley1981}
M.B. Priestley.
\newblock \emph{Spectral Analysis and Time Series}.
\newblock Academic Press, 1981.

\bibitem[Rosenblatt(1956)]{rosenblatt1956central}
Murray Rosenblatt.
\newblock A central limit theorem and a strong mixing condition.
\newblock \emph{Proceedings of the National Academy of Sciences of the United
  States of America}, 42\penalty0 (1):\penalty0 43, 1956.

\bibitem[Rosenblatt(1971)]{rosenblatt1971markov}
Murray Rosenblatt.
\newblock \emph{Markov processes: structure and asymptotic behavior}.
\newblock Springer, 1971.

\bibitem[Rosenblatt(1985)]{rosenblatt1985}
Murray Rosenblatt.
\newblock \emph{Stationary sequences and random fields}.
\newblock Springer, 1985.

\bibitem[Tong(1990)]{tong1990non}
Howell Tong.
\newblock \emph{Non-linear time series: a dynamical system approach}.
\newblock Oxford University Press, 1990.

\bibitem[Tsay(2005)]{tsay2005analysis}
Ruey~S Tsay.
\newblock \emph{Analysis of Financial Time Series}, volume 543.
\newblock John Wiley \& Sons, 2005.

\bibitem[Wiener(1958)]{wiener1958nonlinear}
N~Wiener.
\newblock \emph{Nonlinear Problems in Random Theory}.
\newblock Wiley, New York, 1958.

\bibitem[Wu(2005)]{Wu2005}
Wei~Biao Wu.
\newblock Nonlinear system theory: another look at dependence.
\newblock \emph{Proceedings of the National Academy of Sciences of the United
  States of America}, 102\penalty0 (40):\penalty0 pp. 14150--14154, 2005.

\bibitem[Wu(2011)]{wu2011asymptotic}
Wei~Biao Wu.
\newblock Asymptotic theory for stationary processes.
\newblock \emph{Statistics and Its Interface, 0}, pages 1--20, 2011.

\bibitem[Wu and Wu(2015)]{wu2014}
Wei~Biao Wu and Ying~Nian Wu.
\newblock High-dimensional linear models with dependent observations.
\newblock \emph{Manuscript}, 2015.

\bibitem[Xiao and Wu(2012)]{xiao2012covariance}
Han Xiao and Wei~Biao Wu.
\newblock Covariance matrix estimation for stationary time series.
\newblock \emph{The Annals of Statistics}, 40\penalty0 (1):\penalty0 466--493,
  2012.

\bibitem[Xiao and Wu(2013)]{Xiao20132899}
Han Xiao and Wei~Biao Wu.
\newblock Asymptotic theory for maximum deviations of sample covariance matrix
  estimates.
\newblock \emph{Stochastic Processes and their Applications}, 123\penalty0
  (7):\penalty0 2899 -- 2920, 2013.

\bibitem[Zhang and Cheng(2014)]{zhang2014}
Xianyang Zhang and Guang Cheng.
\newblock Bootstrapping high dimensional time series.
\newblock \emph{arXiv preprint arXiv:1406.1037}, 2014.

\end{thebibliography}
\end{document}